\documentclass[preprint,12pt,3p]{elsarticle}
\usepackage{amsmath,amsthm,amsfonts,amssymb}
\usepackage{algorithm}
\usepackage{fullpage}
\usepackage{subfig}
\usepackage{blkarray}
\usepackage{caption}
\usepackage{float}
\usepackage{algpseudocode}
\usepackage{tikz}
\usepackage{hyperref}
\numberwithin{equation}{section}
\usepackage{xcolor}
\usepackage{colortbl}
\usepackage[normalem]{ulem}
\usepackage{sectsty}
\definecolor{astral}{RGB}{46,116,181}
\subsectionfont{\color{astral}}
\sectionfont{\color{astral}}
\linespread{1.3}
\usepackage{colortbl}

\DeclareMathAlphabet{\mathpzc}{OT1}{pzc}{m}{it}
\DeclareFontFamily{OT1}{pzc}{}
\DeclareFontShape{OT1}{pzc}{m}{it}{<-> s * [0.900] pzcmi7t}{}
\DeclareMathAlphabet{\mathpzc}{OT1}{pzc}{m}{it}

\usepackage{amsbsy}
\usepackage{amsmath}
\usepackage{accents}
\newlength{\dhatheight}

\DeclareMathAlphabet\mathbfcal{OMS}{cmsy}{b}{n}
\definecolor{darkslategray}{rgb}{0.18, 0.31, 0.31}
\definecolor{warmblack}{rgb}{0.0, 0.26, 0.26}

\usepackage{multirow}
\usepackage{mathtools}
\usepackage{algorithm}
\usepackage{algorithmicx}
\makeatletter
\def\BState{\State\hskip-\ALG@thistlm}
\makeatother
\newtheorem{theorem}{Theorem}[section]
\newtheorem{lemma}[theorem]{Lemma}
\newtheorem{corollary}[theorem]{Corollary}
\theoremstyle{definition}
\usepackage{hyperref}
\newtheorem{definition}{Definition}[section]
\newtheorem{remark}{Remark}[section]
\newtheorem{example}{Example}[section]
\journal{XYZ}
\newcommand{\R}{{\mathbb R}}

\newcommand{\kronecker}{\raisebox{1pt}{\ensuremath{\:\otimes\:}}}

\begin{document}

\begin{frontmatter}

\title{Additional results on convergence and semiconvergence of three-step alternating iteration scheme for singular linear systems
}

\vspace{-.4cm}

\author{Vaibhav Shekhar$^a$, Punit Sharma$^b$}

\address{Department of Mathematics,\\
                        Indian Institute of Technology Delhi, India.
                        \\email$^a$: vaibhavshekhar29@gmail.com \\
                        email$^b$: punit.sharma@maths.iitd.ac.in \\
                    }
                        
\vspace{-2cm}

\begin{abstract}
 The three-step alternating iteration scheme for finding an iterative solution of a singular (non-singular) linear systems in a faster way was introduced by Nandi {\it et al.} [Numer. Algorithms;  84 (2) (2020) 457-483], recently.  The authors then provided its convergence criteria for a class of matrix splitting called proper G-weak regular splittings of type I. 
In this note, we analyze further the convergence criteria of the same scheme. In this aspect, we obtain sufficient conditions for the convergence of the same scheme for another class of matrix splittings called  proper G-weak regular splittings of type II. We then show that this scheme converges faster than the two-step alternating and usual iteration schemes, even for this class of splittings. As a particular case, we also establish faster convergence criteria of three-step in a nonsingular matrix setting.
This is shown that a large amount of computational time and memory are required in single-step and two-step alternating iterative methods to solve the nonsingular linear systems more efficiently than the three-step alternating iteration method. Finally, the semiconvergence of a three-step alternating iterative scheme is established. Its faster semiconvergence is demonstrated by considering a singular linear system arising from the Markov process.
\begin{keyword}
Linear Systems, Three-step alternating iterative method, Group inverse, Convergence theorem, Semiconvergence\\

{\bf Mathematics Subject Classification:} 15A09
\end{keyword}

\end{abstract}
\end{frontmatter}

\section{Introduction}
Singular linear systems of the form
\begin{equation}\label{e1}
    Ax=b,
\end{equation}
 where $A\in \mathbb{R}^{n\times n}$, $x\in \mathbb{R}^{n}$, and $b\in \mathbb{R}^{n}$, appear in many mathematical problems like the study of Markov processes (see
\cite{mey}) and finite difference methods for solving certain partial differential equations, such as the Neumann problem and Poisson's equation on a sphere (see  \cite{ple}). We first consider the case when $A$ has {\it index 1}, i.e., if rank($A$)=rank($A^2$). The  unique matrix $X\in {\R}^{n\times n}$ satisfying the matrix equations $AXA=A$, $XAX=X$ and $AX=XA$ is called the {\it group inverse}  of a matrix $A\in {\R}^{n\times n}$, and is denoted by $A^{\#}$. It does not always exist, but only for index one matrices. Using a matrix splitting, one may utilize this generalized inverse to obtain a solution of \eqref{e1}. An expression $A=U-V$ is called a splitting of $A$ if $U$ is nonsingular. The authors of \cite{bpcones} proposed a sub-class of splitting called proper splitting to solve a rectangular/ singular linear system. A splitting $A=U-V$ is called  {\it a proper splitting} \cite{bpcones} if $\mathcal{R}(U)=\mathcal{R}(A)$ and $\mathcal{N}(U) = \mathcal{N}(A)$, where $\mathcal{R}(B)$ and $\mathcal{N}(B)$ denote the range space and the null space of a matrix $B$, respectively. Mishra
and Mishra \cite{nmm} showed the uniqueness of a proper splitting under suitable sufficient
conditions. Using a proper splitting $A=U-V$, Wei \cite{wei:1998}  showed that the (usual) iteration scheme 
     \begin{equation}\label{eqn1.2}
       x^{k+1} = U^{\#}Vx^{k} + U^{\#}b, ~~~k=0,1,2,\cdots  
     \end{equation}
     converges to $A^{\#}b$ (called the minimum $P$-norm solution) if and only if $\rho(U^{\#}V)<1$ for solving an index 1  linear system of the form \eqref{e1}.  The author also established several convergence criteria of the iterative scheme \eqref{eqn1.2} for different sub-classes of proper splittings.  A proper splitting $A=U-V$ of $A\in \mathbb{R}^{n\times n}$ is called 
     \begin{enumerate}
         \item {\it proper G-regular splitting} if $U^{\#}$ exists, $U^{\#}\geq 0$ and $V\geq 0$\cite{baliarsingh:2015},
         \item {\it proper G-weak regular splitting (of type I)}  if  $U^{\#}$ exists, $U^{\#}\geq 0$ and $U^{\#}V\geq 0$\cite{baliarsingh:2015},
         \item {\it proper G-weak regular splitting of type II}  if $U^{\#}$ exists, $U^{\#}\geq 0$ and $VU^{\#}\geq 0$\cite{giri:2019}.
     \end{enumerate}
        Here $B\geq 0$ means all the entries of $B$ are nonnegative. An $n \times n$ real matrix $A$ is called {\it monotone}
 if $Ax \geq  0 ~\Rightarrow x \geq 0$. Collatz \cite{collatz:1966} showed that $A$ is monotone if and only if  $A^{-1}$ exists and  $A^{-1}\geq 0$. We refer the reader to \cite{collatz:1966}, which discusses the natural occurrence of monotone matrices in  finite-difference approximation methods for solving elliptic partial differential equations, and to \cite{bpbook}, which discusses different generalizations of the notion of monotonicity and their characterization. One such characterization is recalled next.  A real $n \times n$ matrix $A$ is called {\it group monotone}
 if $A^{\#}$ exists and $A^{\#}\geq 0$.  A characterization of a group monotone matrix in terms of proper G-weak regular splitting of type I is stated next. Let $A = U-V$ be a proper G-weak regular splitting of type I. Then $A$ is group monotone if and only if $\rho(U^{\#}V)<1$ \cite{nandi:2019}. This characterization also provides sufficient conditions for the convergence of the iteration scheme \eqref{eqn1.2}. Analog characterization of a group monotone matrix also exists in the literature regarding proper G-weak regular splitting of type II (see Section \ref{sec2} for details).
   The usual iteration scheme \eqref{eqn1.2} converges very slowly in many practical cases. So, different comparison results are established in the literature (see \cite{giri:2016}, \cite{jena:2018}, \cite{jena:2012}, \cite{jena:2013} and \cite{wei:2001}). These results help us to find a splitting whose corresponding iteration scheme converges faster than the other.  But, in the case of a matrix having many proper splittings, finding the best splitting (in the sense of faster convergence) is a complicated and time-consuming process. To avoid this, Nandi {\it et al.} \cite{nandi:2019} proposed the three-step alternating iteration scheme:
\begin{equation}\label{eqn1.6}
x^{k+1}=X^{\#}YU^{\#}VK^{\#}Lx^{k}+X^{\#}(YU^{\#}VK^{\#}+YU^{\#}+I)b,\quad k=0,1,2,\ldots,
\end{equation}
using proper splittings $A=K-L=U-V=X-Y$ motivated by the work of Mishra \cite{mishra:2018}. Similarly, one can generate the two-step alternating scheme 
\begin{equation}\label{eqn1.7}
x^{k+1}=X^{\#}YU^{\#}Vx^{k}+X^{\#}(YU^{\#}+I)b,\quad k=0,1,2,\ldots,
\end{equation}
corresponding to proper splittings $A=U-V=X-Y$, the group inverse version of the alternating iteration scheme (7) of \cite{mishra:2018}. The authors of \cite{nandi:2019} then showed that the three-step alternating scheme converges for proper G-weak regular splittings of type I under the assumption of certain conditions. However,  convergence theory for the iteration schemes \eqref{eqn1.6} and \eqref{eqn1.7} is not yet studied in the case of proper G-weak regular splittings of type II.  \\

Let us turn our attention to the case when the coefficient matrix in \eqref{e1} is not necessarily of index 1. It is well known that when the matrix $U$ is  nonsingular, then \eqref{eqn1.2} becomes
 \begin{equation}\label{e6.22}
       x^{k+1} = U^{-1}Vx^{k} + U^{-1}b.  
     \end{equation}
A necessary
and sufficient condition for the convergence of \eqref{eqn1.2} for any $x^0$ is that $\rho(U^{-1}V)< 1$ ($U^{-1}V$ is zero-convergent in this case) \cite{varga}. However, if $A$ is singular, then in this case, $1$ is an eigenvalue of $U^{-1}V$ and so the convergence of \eqref{e6.22} fails, i.e., $U^{-1}V$ is not zero-convergent, then
we look for some necessary and sufficient conditions so that \eqref{e6.22} converges to a solution of \eqref{e1}. If this happens, we say that the scheme \eqref{e6.22} is semiconvergent. 
Semiconvergence criteria for a three-step alternating iterative scheme are also not established in the literature.

The purpose of this article is two-fold. First, we establish the convergence theory of the three-step alternating method for proper G-weak regular splittings of type II. Second, we obtain sufficient criteria for semiconvergence of the three-step scheme in the case of quasi-weak regular splittings of type I or type II.
The rest of the article is organized in the following way. In Section \ref{sec2}, we introduce notations, definitions and a few preliminary results frequently used to derive the main results. In Section \ref{sec3}, we consider the problem of solving the index one linear system that comprises some results presenting another set of convergence criteria of the three-step alternating iteration scheme \eqref{eqn1.6}. This section also discusses comparison results which show that the three-step alternating iteration scheme converges faster than the two-step alternating iteration scheme and the usual iteration scheme. Section \ref{sec4} deals with the problem of semiconvergence of three-step alternating schemes when the coefficient matrix is a singular matrix or singular $M$-matrix.

\section{Preliminaries}\label{sec2}
Let $\mathbb{R}^{n\times n}$ denote the set of all real matrices of order $n\times n$. $\mathbb{R}^{n}$ denotes an $n$-dimensional Euclidean space. $A^t$ denotes the transpose of the matrix $A$. Let $L$ and $M$ be complementary subspaces of $\mathbb{R}^{n}$ (i.e., $L\oplus M=\mathbb{R}^{n}$), then $P_{L,M}$ is a projector on $L$ along $M$. So, $P_{L,M}A=A$ if and only if $\mathcal{R}(A)\subseteq L$ and $AP_{L,M}=A$ if and only if $\mathcal{N}(A)\supseteq M$. $r(A)$ denotes the rank of matrix $A$. Some useful properties of the group inverse are: $\mathcal{R}(A) = \mathcal{R}(A^{\#})$; $\mathcal{N}(A) = \mathcal{N}(A^{\#})$; $AA^{\#} = P_{\mathcal{R}(A),\mathcal{N}(A)}=A^{\#}A$. In particular, if $x\in \mathcal{R}(A)$, then $x = A^{\#}Ax$.
Let $A, B\in \mathbb{R}^{n\times n}$. Then $A\geq B$ means $A-B\geq 0$. A similar notation follows for $x\in \mathbb{R}^{n}$ as it can be treated as an $n\times 1$ matrix. The spectral radius of $A$ is denoted by $\rho(A)=\max\{|\lambda|: \lambda \text{ is an eigenvalue of } A\}$ and the spectrum is denoted by $\sigma(A)$. We define $\gamma(A)=\max\{|\lambda|:\lambda \in \sigma(A), \lambda\neq 1\}$. 
A matrix $A$ is called convergent (or zero-convergent) if $\displaystyle\lim_{k\to \infty}A^k=0$ and semiconvergent if $\displaystyle\lim_{k\to \infty}A^k$ exists, although it need not be the zero matrix. $A$ is called an $M$-matrix if $A=sI-B$ such that $s>0$ and $s\geq \rho(B)$. It is a nonsingular $M$-matrix if $s> \rho(B)$ and singular $M$-matrix if $s=\rho(B)$. An $M$-matrix $A$ is said to have property $c$ if $s^{-1}B$ is semiconvergent for some $s$. 
The following three results below deal with a nonnegative matrix and its spectral radius.

\begin{theorem}\label{frob1}\textnormal{(\cite[Theorem 2.20]{varga})}\\
Let $A \in {\R}^{n \times n}$ and $A \geq 0$. Then\\
$(i)$ A has a  real eigenvalue equal to its spectral radius.\\
$(ii)$ there exists a nonnegative eigenvector for its spectral radius.
 \end{theorem}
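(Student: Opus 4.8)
The plan is to prove the two assertions together, first for strictly positive matrices and then for a general nonnegative $A$ by a perturbation argument. For a matrix $B>0$ (all entries positive) I would consider the continuous self-map $f(x)=Bx/\|Bx\|_1$ of the standard simplex $\Delta=\{x\in\R^n : x\geq 0,\ \sum_i x_i=1\}$. Since $Bx>0$ for every nonzero $x\geq 0$, the denominator never vanishes and $f$ maps the compact convex set $\Delta$ into itself, so Brouwer's fixed point theorem yields $x^*\in\Delta$ with $Bx^*=\lambda x^*$, where $\lambda=\|Bx^*\|_1>0$. This already produces a nonnegative eigenvector with a positive eigenvalue.

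To identify $\lambda$ with $\rho(B)$, I would apply the same construction to $B^t>0$ to obtain a positive vector $w\in\Delta$ with $B^tw=\mu w$, i.e.\ $w^tB=\mu w^t$. Evaluating $w^tBx^*$ two ways gives $\mu(w^tx^*)=\lambda(w^tx^*)$, and since $w^tx^*>0$ this forces $\lambda=\mu$. Moreover, for any eigenpair $(\nu,y)$ of $B$ the triangle inequality gives $|\nu|\,|y|\leq B|y|$ entrywise (with $|y|$ the entrywise modulus); left-multiplying by $w^t>0$ yields $|\nu|\,w^t|y|\leq \mu\,w^t|y|$ with $w^t|y|>0$, so $|\nu|\leq\mu=\lambda$. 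Taking $\nu$ of maximal modulus shows $\lambda=\rho(B)$, which settles both $(i)$ and $(ii)$ in the positive case.

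For a general $A\geq 0$ I would set $A_\varepsilon=A+\varepsilon\,\mathbf{1}\mathbf{1}^t>0$ for $\varepsilon>0$, so the positive case supplies $\lambda_\varepsilon=\rho(A_\varepsilon)$ together with a unit eigenvector $x_\varepsilon\in\Delta$ satisfying $A_\varepsilon x_\varepsilon=\lambda_\varepsilon x_\varepsilon$. The scalars $\lambda_\varepsilon$ are bounded and the $x_\varepsilon$ all lie in the compact set $\Delta$, so along some sequence $\varepsilon_k\to 0$ I can extract $x_{\varepsilon_k}\to x_0\in\Delta$ and $\lambda_{\varepsilon_k}\to\lambda_0$. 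Passing to the limit in the eigenvalue relation gives $Ax_0=\lambda_0 x_0$ with $x_0\geq 0$ and $x_0\neq 0$, the nonnegative eigenvector required for $(ii)$. Finally, since the spectral radius depends continuously on the matrix entries (the eigenvalues being the roots of the characteristic polynomial), $\rho(A_\varepsilon)\to\rho(A)$, whence $\lambda_0=\lim_k\rho(A_{\varepsilon_k})=\rho(A)$; thus $\lambda_0$ is a real eigenvalue equal to $\rho(A)$, proving $(i)$.

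The main obstacle I anticipate is the limiting step: one must ensure that the limiting eigenvector does not collapse to zero and that the limiting eigenvalue is genuinely $\rho(A)$ rather than merely some nonnegative eigenvalue bounded above by it. Confining every $x_\varepsilon$ to the simplex $\Delta$ guarantees $x_0\neq 0$, while continuity of $\rho(\cdot)$ pins down $\lambda_0=\rho(A)$; the strict positivity of each $A_\varepsilon$ is precisely what makes the clean Perron theory available before the limit is taken.
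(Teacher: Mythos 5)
Your proof is correct, but there is nothing in the paper to compare it against: the statement is quoted verbatim as a preliminary from Varga's book (Theorem 2.20 there), and the paper supplies no proof of its own. Measured against the classical argument that the citation points to, your route differs in its core existence step. Varga, like most textbook treatments, first proves the Perron--Frobenius theorem for \emph{irreducible} matrices via the Wielandt quotient $r(x)=\min_{i:\,x_i\neq 0}(Ax)_i/x_i$ maximized over the simplex, and then treats a general $A\geq 0$ exactly as you do, perturbing to $A+\varepsilon E>0$ (with $E$ the all-ones matrix) and passing to the limit. You instead settle the positive case by applying Brouwer's fixed point theorem to $x\mapsto Bx/\lVert Bx\rVert_1$ on the simplex, and you identify the resulting eigenvalue with $\rho(B)$ through the left Perron vector of $B^t$; the two points that usually cause trouble in this identification, namely $w^t x^*>0$ and $w^t\lvert y\rvert>0$, are correctly disposed of by the strict positivity of $w$. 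The trade-off is clear: Brouwer is a heavier topological input than the pure compactness argument behind the Wielandt quotient, but it makes the existence step very short, whereas the Wielandt route yields the sharper conclusions (strict positivity and simplicity of the Perron root in the irreducible case) that this weak form of the theorem does not require. One step you rightly flag, and which must not be glossed over, is the continuity of $\rho(\cdot)$ in the matrix entries (continuity of polynomial roots in the coefficients); without it the limit argument only gives $\lambda_0\leq\rho(A)$ rather than equality.
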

 
 \begin{theorem}\label{frob2}\textnormal{(\cite[Theorem 2.1.11]{bpbook})}\\
 Let $A \in {\R}^{n \times n}$, $A \geq 0$, $x\geq 0$ $(x\neq 0)$ and $\alpha$ be a positive scalar.\\
 $(i)$ If $\alpha x\leq Ax$, then $\alpha \leq \rho(A)$.\\
 $(ii)$ If $Ax\leq \alpha x$, $x>0$, then $\rho(A)\leq \alpha$.
 \end{theorem}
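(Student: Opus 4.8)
The plan is to treat the two inequalities separately, since the lower bound $(i)$ and the upper bound $(ii)$ on $\rho(A)$ call for genuinely different arguments, and to invoke Theorem~\ref{frob1} only where a Perron eigenvector is actually needed.

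For part $(i)$ I would argue by contradiction. Suppose $\alpha > \rho(A)$. Then $\alpha I - A$ is invertible and, because $\rho(A/\alpha) = \rho(A)/\alpha < 1$, the Neumann series gives
\[
(\alpha I - A)^{-1} = \alpha^{-1}\sum_{k=0}^{\infty}\left(\frac{A}{\alpha}\right)^{k} \ge 0,
\]
every term being nonnegative since $A \ge 0$. The hypothesis $\alpha x \le Ax$ reads $(\alpha I - A)x \le 0$, so applying the nonnegative matrix $(\alpha I - A)^{-1}$ to this nonpositive vector yields $x = (\alpha I - A)^{-1}(\alpha I - A)x \le 0$. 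Together with $x \ge 0$ this forces $x = 0$, contradicting $x \ne 0$. Hence $\alpha \le \rho(A)$. A fully equivalent route avoids the resolvent: iterate $\alpha x \le Ax$ to get $\alpha^{k}x \le A^{k}x$, take a monotone vector norm, and invoke Gelfand's formula $\rho(A) = \lim_{k}\|A^{k}\|^{1/k}$.

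For part $(ii)$ I would instead produce a nonnegative left Perron vector. Applying Theorem~\ref{frob1} to $A^{t} \ge 0$, and using $\rho(A^{t}) = \rho(A)$, gives a vector $y \ge 0$, $y \ne 0$, with $A^{t}y = \rho(A)y$. Pairing $y$ with the hypothesis $Ax \le \alpha x$ and using $y \ge 0$ gives $y^{t}Ax \le \alpha\, y^{t}x$, while $y^{t}Ax = (A^{t}y)^{t}x = \rho(A)\,y^{t}x$. Thus $(\rho(A) - \alpha)\,y^{t}x \le 0$, and since $x > 0$ together with $y \ge 0$, $y \ne 0$ force $y^{t}x = \sum_{i} y_i x_i > 0$, I may cancel it to conclude $\rho(A) \le \alpha$.

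The step I expect to be the main obstacle, and the reason the two parts are not symmetric, is precisely the positivity of the pairing $y^{t}x$. In part $(ii)$ the strict hypothesis $x > 0$ guarantees $y^{t}x > 0$ for any nonzero $y \ge 0$, so the left-eigenvector argument closes cleanly. In part $(i)$ only $x \ge 0$ is available, and then $y^{t}x$ may vanish when the support of the left Perron vector misses that of $x$, rendering the same pairing vacuous; this is exactly why I route part $(i)$ through the resolvent/Neumann (or iteration) argument rather than through Theorem~\ref{frob1}. If one insisted on a single unified proof, the fix would be a perturbation of $A$ by a strictly positive matrix together with continuity of the spectral radius, but handling the two bounds by the two tailored arguments above is the more economical plan.
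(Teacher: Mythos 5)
The paper never proves this statement: it is imported verbatim as Theorem 2.1.11 of Berman and Plemmons and used as a black box, so your attempt can only be measured against the standard reference argument rather than an in-paper proof. Your proof is correct and complete on both halves. In part $(i)$, the resolvent argument works: under the assumption $\alpha>\rho(A)$ the Neumann series gives $(\alpha I-A)^{-1}\ge 0$ (this is exactly the paper's Theorem \ref{neuman} applied to $A/\alpha$), and applying this nonnegative matrix to $(\alpha I-A)x\le 0$ forces $x\le 0$, hence $x=0$, a contradiction; your alternative via $\alpha^k x\le A^k x$, a monotone norm, and Gelfand's formula is equally valid. In part $(ii)$, pairing the hypothesis with a left Perron vector $y\ge 0$ of $A^{t}$ (Theorem \ref{frob1} applied to $A^{t}$, using $\rho(A^t)=\rho(A)$) and cancelling $y^{t}x>0$ is sound, and your observation that the strict positivity $x>0$ is what makes $y^{t}x>0$ — so that this pairing cannot be recycled for part $(i)$, where the supports of $x$ and $y$ may be disjoint — correctly identifies the real asymmetry between the two bounds; indeed $(ii)$ is false if $x>0$ is weakened to $x\ge 0$. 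Compared with the textbook route, which typically deduces both inequalities from the Collatz--Wielandt max--min characterization of $\rho(A)$ (or, as in Varga, from Perron--Frobenius for irreducible matrices plus a perturbation/limiting step), your treatment is more elementary and self-contained: it uses only the two tools the paper itself has already stated (Theorems \ref{frob1} and \ref{neuman}), which makes it well matched to the preliminaries section where this result sits.
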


\begin{theorem}\label{neuman}\textnormal{(\cite[Theorem 3.15]{varga})}\\
Let $A\in {\R}^{n\times n}$ and $A\geq 0$. Then $\rho(A)<1$ if and only if  $(I-A)^{-1}$ exists and $(I-A)^{-1}=\displaystyle \sum_{k=0}^{\infty}A^{k}\geq 0$.
\end{theorem}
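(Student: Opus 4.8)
The plan is to establish both implications separately, relying on the telescoping partial-sum identity for the forward direction and on the nonnegative eigenvector supplied by Theorem~\ref{frob1} for the converse.

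For the forward implication, I would assume $\rho(A)<1$ and argue in three steps. First, $I-A$ is nonsingular: writing $\det(I-A)=\prod_i(1-\lambda_i)$ over the eigenvalues $\lambda_i$ of $A$, each factor is nonzero because $|\lambda_i|\le\rho(A)<1$, so no $\lambda_i$ equals $1$. Second, I would show that $A$ is zero-convergent, i.e. $A^m\to 0$ as $m\to\infty$; this follows from $\rho(A)<1$, for instance via the Jordan canonical form (each block contributes entries that are polynomials in $m$ times $\lambda^m$ with $|\lambda|<1$) or via the Gelfand formula $\rho(A)=\lim_m\|A^m\|^{1/m}$. Third, the telescoping identity $(I-A)\sum_{k=0}^{m}A^{k}=I-A^{m+1}$ yields $\sum_{k=0}^{m}A^{k}=(I-A)^{-1}(I-A^{m+1})$, and letting $m\to\infty$ gives $\sum_{k=0}^{\infty}A^{k}=(I-A)^{-1}$. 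Since $A\ge 0$, every power $A^{k}\ge 0$, so each partial sum is nonnegative and therefore the limit $(I-A)^{-1}=\sum_{k=0}^{\infty}A^{k}\ge 0$.

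For the converse, I would assume that $(I-A)^{-1}$ exists and equals $\sum_{k=0}^{\infty}A^{k}\ge 0$, and deduce $\rho(A)<1$. By Theorem~\ref{frob1}, since $A\ge 0$ there is a nonzero eigenvector $x\ge 0$ with $Ax=\rho(A)x$, and hence $A^{k}x=\rho(A)^{k}x$ for every $k$. Because the matrix series $\sum_{k=0}^{\infty}A^{k}$ is assumed to converge, its general term must tend to the zero matrix, so $A^{k}x=\rho(A)^{k}x\to 0$; as $x\neq 0$, this forces $\rho(A)^{k}\to 0$, that is, $\rho(A)<1$.

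This statement is classical, so there is no genuinely deep obstacle; the step requiring the most care is the implication $\rho(A)<1\Rightarrow A^{m}\to 0$, which is not purely elementary and rests on the Jordan form or the spectral radius formula. Every other step is either the elementary telescoping identity, the observation that products of nonnegative matrices are nonnegative, or a direct appeal to Theorem~\ref{frob1}.
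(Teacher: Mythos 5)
Your proof is correct. Note, however, that the paper itself gives no proof of this statement: it is quoted as a preliminary result with a citation to \cite[Theorem 3.15]{varga}, so there is no in-paper argument to compare against. Your two-directional argument (Neumann series with the telescoping identity $(I-A)\sum_{k=0}^{m}A^{k}=I-A^{m+1}$ for the forward direction, and the Perron--Frobenius eigenvector from Theorem~\ref{frob1} together with the vanishing of the general term of a convergent series for the converse) is the standard proof, essentially the one found in Varga's book.
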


\cite[Theorem 4]{nandi:2019} collects a few properties of a proper splitting $A=U-V$, and is presented next. 

\begin{theorem}\textnormal{(\cite[Theorem 4]{nandi:2019})}\label{thm2.4}\\
Let $A=U-V$ be a proper splitting of $A\in \mathbb{R}^{n\times n}$. Suppose that $A^{\#}$ exists. Then\\
$(i)$ $U^{\#}$ exists,\\
$(ii)$ $AA^{\#}=UU^{\#}$; $A^{\#}A=U^{\#}U$,\\
$(iii)$ $A=U(I-U^{\#}V)=(I-VU^{\#})U$,\\
$(iv)$ $I-U^{\#}V$ and $I-VU^{\#}$ are non-singular,\\
$(v)$ $A^{\#}=(I-U^{\#}V)^{-1}U^{\#}=U^{\#}(I-VU^{\#})^{-1}$.
\end{theorem}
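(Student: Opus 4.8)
The plan is to exploit the defining feature of a proper splitting, namely that $U$ shares its range and null space with $A$, so that the index-one structure of $A$ transfers almost verbatim to $U$. Throughout I would use the projector description $AA^\#=A^\#A=P_{\mathcal{R}(A),\mathcal{N}(A)}$ recalled in Section \ref{sec2}, together with the standing hypotheses $\mathcal{R}(U)=\mathcal{R}(A)$ and $\mathcal{N}(U)=\mathcal{N}(A)$. For $(i)$ I would note that $A^\#$ exists exactly when $A$ has index one, i.e.\ $\mathbb{R}^n=\mathcal{R}(A)\oplus\mathcal{N}(A)$, equivalently $\mathcal{R}(A)\cap\mathcal{N}(A)=\{0\}$ (the dimension count being automatic from rank-nullity). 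Substituting $\mathcal{R}(U)=\mathcal{R}(A)$ and $\mathcal{N}(U)=\mathcal{N}(A)$ gives $\mathcal{R}(U)\cap\mathcal{N}(U)=\{0\}$, so $U$ also has index one and $U^\#$ exists. For $(ii)$, both $AA^\#$ and $UU^\#$ are the projector onto the common range along the common null space, $P_{\mathcal{R}(A),\mathcal{N}(A)}=P_{\mathcal{R}(U),\mathcal{N}(U)}$; since $AA^\#=A^\#A$ and $UU^\#=U^\#U$ for the group inverse, all four products coincide with this single projector, which settles both equalities at once.

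The technical heart of the theorem is a pair of \emph{absorption} identities for $V$, which I would establish first. Writing $V=U-A$, every vector $Vx=Ux-Ax$ lies in $\mathcal{R}(U)=\mathcal{R}(A)$, so $\mathcal{R}(V)\subseteq\mathcal{R}(U)$; dually, for $x\in\mathcal{N}(U)=\mathcal{N}(A)$ one has $Vx=Ux-Ax=0$, so $\mathcal{N}(U)\subseteq\mathcal{N}(V)$. Since $UU^\#$ fixes $\mathcal{R}(U)$ and $I-U^\#U$ maps onto $\mathcal{N}(U)$, these two inclusions give, respectively, $UU^\#V=V$ and $VU^\#U=V$. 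Part $(iii)$ is then immediate: $U(I-U^\#V)=U-UU^\#V=U-V=A$ and $(I-VU^\#)U=U-VU^\#U=U-V=A$.

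For $(iv)$ the factorization $A=U(I-U^\#V)$ cannot be inverted directly because $U$ is singular, so I would argue by injectivity. If $(I-U^\#V)x=0$ then $x=U^\#Vx\in\mathcal{R}(U^\#)=\mathcal{R}(U)=\mathcal{R}(A)$, while $Ax=U(I-U^\#V)x=0$ puts $x\in\mathcal{N}(A)$; by index one, $\mathcal{R}(A)\cap\mathcal{N}(A)=\{0\}$ forces $x=0$, so $I-U^\#V$ is nonsingular. Nonsingularity of $I-VU^\#$ then follows cheaply: if $(I-VU^\#)x=0$, applying $U^\#$ gives $(I-U^\#V)U^\#x=0$, hence $U^\#x=0$ by what was just proved, and therefore $x=VU^\#x=V(U^\#x)=0$.

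Finally, for $(v)$ I would verify that $G:=(I-U^\#V)^{-1}U^\#$ satisfies the three defining equations of the group inverse of $A$ and invoke uniqueness. Writing $S=I-U^\#V$ and $P=UU^\#$, the absorption identities and $U^\#UU^\#=U^\#$ yield $AG=US\,S^{-1}U^\#=P$ and, after checking the commutation $PS=SP$ (both sides equal $UU^\#-U^\#V$, using $VUU^\#=V$), also $GA=S^{-1}PS=P$; the relations $AP=A$ and $GP=G$ then give $AGA=A(GA)=AP=A$ and $GAG=G(AG)=GP=G$, so $G=A^\#$. The second representation is purely algebraic: from $U^\#(I-VU^\#)=U^\#-U^\#VU^\#=(I-U^\#V)U^\#$ one obtains $(I-U^\#V)^{-1}U^\#=U^\#(I-VU^\#)^{-1}$, so the two formulas for $A^\#$ agree. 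I expect the only real bookkeeping obstacle to be the axiom check in $(v)$ --- in particular the commutation $PS=SP$ and keeping track of which of the identities $UU^\#V=V$, $VUU^\#=V$, $AUU^\#=A$, $UU^\#A=A$ is invoked at each step --- since everything else collapses to the two absorption identities of the second paragraph.
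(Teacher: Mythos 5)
This statement is quoted in the paper purely as a preliminary result, cited from \cite[Theorem 4]{nandi:2019}, and no proof of it appears anywhere in the paper itself; so there is no in-paper argument to compare against, only the external source. Judged on its own, your proof is correct and self-contained. Parts $(i)$ and $(ii)$ follow exactly as you say from the projector characterization $AA^{\#}=A^{\#}A=P_{\mathcal{R}(A),\mathcal{N}(A)}$ and the range/null-space equalities defining a proper splitting. Your two absorption identities $UU^{\#}V=V$ and $VU^{\#}U=V$ are precisely the projector facts recorded in Section \ref{sec2} ($P_{L,M}B=B$ iff $\mathcal{R}(B)\subseteq L$, and $BP_{L,M}=B$ iff $\mathcal{N}(B)\supseteq M$) applied with $\mathcal{R}(V)\subseteq\mathcal{R}(U)$ and $\mathcal{N}(U)\subseteq\mathcal{N}(V)$, and they do make $(iii)$ immediate. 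The injectivity argument for $(iv)$ is sound, including the economical reduction of the $I-VU^{\#}$ case to the $I-U^{\#}V$ case via $U^{\#}$; and in $(v)$ the axiom check goes through: $AG=P$, the commutation $PS=SP=UU^{\#}-U^{\#}V$ (the $PS$ side additionally needs the trivial identity $UU^{\#}U^{\#}=U^{\#}$, which you use implicitly), hence $GA=S^{-1}PS=P$, and then $AGA=AP=A$, $GAG=GP=G$, with uniqueness of the group inverse closing the argument; the intertwining relation $U^{\#}(I-VU^{\#})=(I-U^{\#}V)U^{\#}$ correctly yields the second representation. The only cosmetic remark is that your appeal to $AUU^{\#}=A$ is again just the second projector fact with $\mathcal{N}(A)\supseteq\mathcal{N}(U)$ (in fact equality), so everything in the proof rests on the same two Section \ref{sec2} projector properties, which is exactly the style of argument this paper uses elsewhere (e.g., in Theorem \ref{thm4.3}).
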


The following result is a particular case of \cite[Theorem 10]{giri:2019} and provides convergence criteria for the iteration scheme \eqref{eqn1.2}.

 \begin{theorem}\label{thm3.7}
 Let $A=U-V$ be a proper G-weak regular splitting of type II of $A$. Then $A^{\#}\geq 0$ if and only if $\rho(U^{\#}V)<1$.
 \end{theorem}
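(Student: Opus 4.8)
The plan is to exploit the factored expression $A^{\#}=U^{\#}(I-VU^{\#})^{-1}$ from Theorem~\ref{thm2.4}(v), which becomes available once we know that $A^{\#}$ exists. Since existence of $A^{\#}$ is not listed among the hypotheses, I would first record it: because the splitting is proper we have $\mathcal{R}(A)=\mathcal{R}(U)$ and $\mathcal{N}(A)=\mathcal{N}(U)$, and because $U^{\#}$ exists we have $\mathbb{R}^{n}=\mathcal{R}(U)\oplus\mathcal{N}(U)=\mathcal{R}(A)\oplus\mathcal{N}(A)$, which is exactly the index-one condition guaranteeing that $A^{\#}$ exists. I would also note at the outset that $\rho(U^{\#}V)=\rho(VU^{\#})$, since $U^{\#}V$ and $VU^{\#}$ share the same nonzero eigenvalues; this lets me work throughout with the nonnegative matrix $VU^{\#}$ (type~II gives $VU^{\#}\geq 0$), which is precisely the object to which Perron--Frobenius applies.

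For the ``if'' direction, assume $\rho(U^{\#}V)<1$, hence $\rho(VU^{\#})<1$. Since $VU^{\#}\geq 0$, Theorem~\ref{neuman} yields $(I-VU^{\#})^{-1}=\sum_{k\geq 0}(VU^{\#})^{k}\geq 0$. Combining this with $U^{\#}\geq 0$ and the formula $A^{\#}=U^{\#}(I-VU^{\#})^{-1}$ exhibits $A^{\#}$ as a product of two nonnegative matrices, so $A^{\#}\geq 0$. This direction is routine.

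The ``only if'' direction is where the work lies. Assume $A^{\#}\geq 0$ and set $\lambda=\rho(VU^{\#})$. By Theorem~\ref{frob1} there is a nonnegative eigenvector $x\geq 0$, $x\neq 0$, with $VU^{\#}x=\lambda x$. If $\lambda=0$ we are done, so assume $\lambda>0$. By Theorem~\ref{thm2.4}(iv) the matrix $I-VU^{\#}$ is nonsingular, so $\lambda\neq 1$ and $(I-VU^{\#})^{-1}x=\tfrac{1}{1-\lambda}x$. Applying $A^{\#}=U^{\#}(I-VU^{\#})^{-1}$ to $x$ then gives the key identity $A^{\#}x=\tfrac{1}{1-\lambda}\,U^{\#}x$. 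Here $U^{\#}x$ is nonnegative (as $U^{\#}\geq 0$) and nonzero (since $\lambda>0$ forces $VU^{\#}x\neq 0$, hence $U^{\#}x\neq 0$), while $A^{\#}x\geq 0$ by assumption; evaluating at a coordinate where $U^{\#}x$ is strictly positive forces $\tfrac{1}{1-\lambda}\geq 0$, and since $\lambda\neq 1$ this gives $\tfrac{1}{1-\lambda}>0$, i.e. $\lambda<1$.

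The main obstacle I anticipate is the sign-extraction in this last direction: one must produce an eigenvector on which $A^{\#}$ acts as a scalar multiple of a nonnegative nonzero vector, and the natural candidate $U^{\#}x$ succeeds precisely because the second factorization in Theorem~\ref{thm2.4}(v) places $U^{\#}$ on the \emph{left}. (The type~I argument would instead use $A^{\#}=(I-U^{\#}V)^{-1}U^{\#}$ together with $U^{\#}V\geq 0$; the asymmetry between the two factorizations is exactly what makes the type~II statement go through.) Checking that $U^{\#}x\neq 0$ and that $\lambda\neq 1$ are the small but essential verifications that keep the argument from degenerating.
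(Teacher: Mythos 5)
Your proof is correct, but note that the paper itself does not prove this statement: Theorem \ref{thm3.7} is presented as a quoted preliminary, a particular case of Theorem 10 of \cite{giri:2019}, so there is no in-paper argument to compare against. Your reconstruction is a valid, self-contained derivation from the results the paper does state, and it follows the route that is standard for type~II splittings: work with the nonnegative matrix $VU^{\#}$ rather than $U^{\#}V$, and exploit the factorization $A^{\#}=U^{\#}(I-VU^{\#})^{-1}$ of Theorem \ref{thm2.4}(v). Two details you handle that a careless write-up would miss deserve emphasis. First, the existence of $A^{\#}$ is not among the hypotheses, and Theorem \ref{thm2.4} cannot be invoked without it; your observation that a proper splitting plus the existence of $U^{\#}$ gives $\mathbb{R}^{n}=\mathcal{R}(U)\oplus\mathcal{N}(U)=\mathcal{R}(A)\oplus\mathcal{N}(A)$, hence index one, closes that gap correctly. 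Second, in the ``only if'' direction the sign extraction from $A^{\#}x=\tfrac{1}{1-\lambda}\,U^{\#}x$ requires both $U^{\#}x\neq 0$ (which you get from $\lambda>0$) and $\lambda\neq 1$ (which you get from Theorem \ref{thm2.4}(iv)); with those in hand, evaluating at a strictly positive coordinate of $U^{\#}x$ indeed forces $\tfrac{1}{1-\lambda}>0$ and so $\lambda<1$, and the identity $\rho(U^{\#}V)=\rho(VU^{\#})$ transfers the bound back to the iteration matrix. The ``if'' direction via Theorem \ref{neuman} is routine, as you say. In short: no gaps, and the argument is a faithful stand-in for the external proof the paper relies on.
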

 
 It is well-known that a matrix splitting yielding a smaller spectral radius of the iteration matrix has a faster convergence rate. The next result compares the convergence rate of different iteration matrices arising from different types of matrix splittings and is a particular case of \cite[Theorem 11]{giri:2019}.
 
 \begin{theorem}\label{thm3.8} Let $A=U-V$ be a proper G-weak regular splitting of type II and $A=X-Y$ be a proper G-weak regular splitting of type I of a group monotone matrix $A$. If $U^{\#}\geq X^{\#}$, then $\rho(U^{\#}V)\leq  \rho(X^{\#}Y)<1$.
 \end{theorem}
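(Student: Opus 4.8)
The plan is to separate the statement into its two strict endpoints and the central comparison $\rho(U^\#V)\le\rho(X^\#Y)$. The endpoints come for free from results already collected. Since $A$ is group monotone we have $A^\#\ge 0$; applying Theorem \ref{thm3.7} to the type~II splitting $A=U-V$ gives $\rho(U^\#V)<1$, and applying the type~I characterization of group monotonicity recalled in the introduction (from \cite{nandi:2019}) to $A=X-Y$ gives $\rho(X^\#Y)<1$. Everything therefore reduces to proving $\rho(U^\#V)\le\rho(X^\#Y)$.

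My central idea is to pass to the nonnegative product $VU^\#$. Because $U^\#V$ and $VU^\#$ share their nonzero spectrum, $\rho(U^\#V)=\rho(VU^\#)=:\alpha$, and for a type~II splitting $VU^\#\ge 0$. If $\alpha=0$ the inequality is trivial, so I would assume $\alpha>0$. By Theorem \ref{frob1} there is a vector $z\ge 0$, $z\ne 0$, with $VU^\# z=\alpha z$. Since $\rho(VU^\#)=\alpha<1$, Theorem \ref{neuman} lets me expand $(I-VU^\#)^{-1}$ as a convergent Neumann series, which on this eigenvector collapses to a scalar, $(I-VU^\#)^{-1}z=(1-\alpha)^{-1}z$. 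Feeding this into the representation $A^\#=U^\#(I-VU^\#)^{-1}$ from Theorem \ref{thm2.4}$(v)$ yields the clean identity $A^\# z=(1-\alpha)^{-1}U^\# z$.

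Next I would bring in the hypothesis $U^\#\ge X^\#$ together with the type~I representation $A^\#=(I-X^\#Y)^{-1}X^\#$ from Theorem \ref{thm2.4}$(v)$, rewritten as $X^\#=A^\#-X^\#Y A^\#$. Combining $A^\# z=(1-\alpha)^{-1}U^\# z\ge(1-\alpha)^{-1}X^\# z$ (using $U^\# z\ge X^\# z\ge 0$) with this identity and clearing the positive factor $(1-\alpha)$ rearranges to $X^\#Y(A^\# z)\ge\alpha(A^\# z)$. Writing $w:=A^\# z$, group monotonicity gives $w\ge 0$, and $w\ne 0$ because $w=0$ would force $U^\# z=0$ and hence $z=\alpha^{-1}VU^\# z=0$, a contradiction. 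Since $X^\#Y\ge 0$ and $X^\#Y w\ge\alpha w$, Theorem \ref{frob2}$(i)$ gives $\alpha\le\rho(X^\#Y)$, which is exactly the comparison sought.

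The main obstacle I anticipate is bookkeeping rather than conceptual. One must justify the transition from $U^\#V$ to the nonnegative matrix $VU^\#$ through equality of nonzero spectra, confirm that the Neumann expansion genuinely acts on the Perron eigenvector, and—most delicately—verify $w=A^\# z\ne 0$ so that Theorem \ref{frob2}$(i)$ is applicable. Isolating the degenerate case $\alpha=0$ at the outset is what keeps that nonvanishing step clean and makes the division by $1-\alpha$ and $\alpha$ legitimate throughout.
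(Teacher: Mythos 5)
Your proof is correct. Be aware, though, that the paper never proves this statement: it appears in Section 2 as a preliminary, quoted as a particular case of Theorem 11 of \cite{giri:2019}, so there is no internal proof to compare against — what you have written is the argument the paper omits. Your route is self-contained within the paper's toolkit: Theorem \ref{thm3.7} and the type I characterization of group monotonicity from \cite{nandi:2019} give the two strict bounds $\rho(U^{\#}V)<1$ and $\rho(X^{\#}Y)<1$; then the Perron vector $z\geq 0$ of the nonnegative matrix $VU^{\#}$ (Theorem \ref{frob1}), the collapse of $A^{\#}=U^{\#}(I-VU^{\#})^{-1}$ on $z$ to $A^{\#}z=(1-\alpha)^{-1}U^{\#}z$ (Theorem \ref{thm2.4}(v) together with Theorem \ref{neuman}), the hypothesis $U^{\#}\geq X^{\#}$, the rewriting $X^{\#}=A^{\#}-X^{\#}YA^{\#}$ of the type I representation, and finally Theorem \ref{frob2}(i) applied to $w=A^{\#}z$ give $\alpha\leq\rho(X^{\#}Y)$. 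This is essentially the standard proof pattern for type I versus type II comparison theorems (it parallels the nonsingular antecedent in \cite{climent:2003}), and you have correctly closed the two places where such arguments typically leak: the degenerate case $\alpha=0$ is isolated at the outset, and the nonvanishing of $w=A^{\#}z$ is verified (if $w=0$ then $U^{\#}z=0$, whence $\alpha z=VU^{\#}z=0$ forces $z=0$), so the application of Theorem \ref{frob2}(i) is legitimate.
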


\section{Convergence of three-step alternating scheme for $G$-weak regular splittings of type II}\label{sec3}

This section provides convergence criteria of a three-step alternating scheme for proper $G$-weak regular splitting of type II. This section assumes that $A$ has index one unless stated explicitly. We frequently use $\mathcal{H}$ for the iteration matrix of the corresponding iteration scheme unless stated. For example, if we consider proper G-weak regular splittings of type II $A=K-L=U-V=X-Y$, then $\mathcal{H}=X^{\#}YU^{\#}VK^{\#}L$, and $\mathcal{S}=YX^{\#}VU^{\#}LK^{\#}$ is the nonnegative matrix. Likewise, in case of proper G-weak regular splittings of type II $A=U-V=X-Y$, $\mathcal{H}=X^{\#}YU^{\#}V$, and $\mathcal{S}=YX^{\#}VU^{\#}$. The following two results help derive the convergence criteria of the three-step iterative scheme \eqref{eqn1.6} when the given matrix splittings are proper G-weak regular splittings of type II.

\begin{lemma}\label{lem4.2}
Let $A=K-L=U-V=X-Y$ be three proper splittings of $A$. If $B=K(K+X-A+YU^{\#}L)^{\#}X$, $\mathcal{R}(K+X-A+YU^{\#}L)=\mathcal{R}(A)$ and $\mathcal{N}(K+X-A+YU^{\#}L)=\mathcal{N}(A)$, then $B^{\#}=X^{\#}(K+X-A+YU^{\#}L)K^{\#}$, $\mathcal{R}(B)=\mathcal{R}(A)$ and $\mathcal{N}(B)=\mathcal{N}(A).$
\end{lemma}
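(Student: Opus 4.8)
The plan is to abbreviate $C := K + X - A + YU^{\#}L$, so that the hypothesis reads $B = KC^{\#}X$ and the claimed identity becomes $B^{\#} = X^{\#}CK^{\#}$. Since $A = K-L = U-V = X-Y$ are proper splittings and $A^{\#}$ exists (index one is in force throughout this section), Theorem~\ref{thm2.4} guarantees that $K^{\#}, U^{\#}, X^{\#}$ exist, and each of $K, U, X$ together with its group inverse shares the range $\mathcal{R}(A)$ and the null space $\mathcal{N}(A)$. First I would promote $C$ to the same footing: the hypotheses $\mathcal{R}(C) = \mathcal{R}(A)$ and $\mathcal{N}(C) = \mathcal{N}(A)$, combined with the index-one decomposition $\mathbb{R}^n = \mathcal{R}(A)\oplus\mathcal{N}(A)$, force $\mathbb{R}^n = \mathcal{R}(C)\oplus\mathcal{N}(C)$, so $C$ has index one, $C^{\#}$ exists, and $\mathcal{R}(C^{\#}) = \mathcal{R}(A)$, $\mathcal{N}(C^{\#}) = \mathcal{N}(A)$. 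I would then fix the single projector $P := P_{\mathcal{R}(A),\mathcal{N}(A)}$ and record that $P = KK^{\#} = K^{\#}K = XX^{\#} = X^{\#}X = CC^{\#} = C^{\#}C$.

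The engine of the argument is a short list of absorption identities: because every factor in $B$ and $B^{\#}$ has range contained in $\mathcal{R}(A)$ and null space containing $\mathcal{N}(A)$, the projector $P$ acts as a two-sided identity on each of $K, X, C, K^{\#}, X^{\#}, C^{\#}$ (for instance $PK = K = KP$, $PX^{\#} = X^{\#} = X^{\#}P$, and $C^{\#}P = C^{\#}$). With these in hand I would verify the three defining equations of the group inverse for $B^{\#} = X^{\#}CK^{\#}$. The central computation is
\begin{equation*}
BB^{\#} = KC^{\#}(XX^{\#})CK^{\#} = KC^{\#}PCK^{\#} = K(C^{\#}C)K^{\#} = KPK^{\#} = KK^{\#} = P,
\end{equation*}
and symmetrically $B^{\#}B = X^{\#}C(K^{\#}K)C^{\#}X = X^{\#}(CC^{\#})X = X^{\#}PX = X^{\#}X = P$, which yields the commutativity relation $BB^{\#} = B^{\#}B$ at once. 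The remaining two axioms then fall out immediately: $BB^{\#}B = PB = (PK)C^{\#}X = B$ and $B^{\#}BB^{\#} = PB^{\#} = (PX^{\#})CK^{\#} = B^{\#}$. This establishes $B^{\#} = X^{\#}(K+X-A+YU^{\#}L)K^{\#}$.

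Finally, since $B^{\#}$ has been shown to exist, the idempotent $BB^{\#} = B^{\#}B$ is the projector $P_{\mathcal{R}(B),\mathcal{N}(B)}$; having just computed it to equal $P_{\mathcal{R}(A),\mathcal{N}(A)}$, matching ranges and null spaces of the two projectors gives $\mathcal{R}(B) = \mathcal{R}(A)$ and $\mathcal{N}(B) = \mathcal{N}(A)$. I expect the only genuinely delicate point to be the preliminary step that $C^{\#}$ exists with the stated range and null space: the formula $B = KC^{\#}X$ already presupposes $C^{\#}$, and it is precisely the two hypotheses on $\mathcal{R}(C)$ and $\mathcal{N}(C)$ that both legitimize this and let $C$ join $K, U, X$ under the umbrella of the common projector $P$, after which the whole verification collapses to bookkeeping with $P$.
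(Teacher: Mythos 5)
Your proposal is correct, and its first half coincides with the paper's argument: the paper also sets $Z=X^{\#}(K+X-A+YU^{\#}L)K^{\#}$, invokes the same common-projector identity $(K+X-A+YU^{\#}L)(K+X-A+YU^{\#}L)^{\#}=AA^{\#}=KK^{\#}=XX^{\#}$, and verifies $ZB=BZ$, $ZBZ=Z$, $BZB=B$; your systematic use of the absorption identities for $P=P_{\mathcal{R}(A),\mathcal{N}(A)}$ is just a cleaner bookkeeping of the same computation. Where you genuinely diverge is the second half. The paper proves $\mathcal{R}(B)=\mathcal{R}(A)$ and $\mathcal{N}(B)=\mathcal{N}(A)$ by element-chasing: it shows $\mathcal{N}(B)\subseteq\mathcal{N}(X)$ by taking $Bx=0$ and successively pre-multiplying by $K^{\#}$ and by $K+X-A+YU^{\#}L$, and then handles the range by passing to transposes and proving $\mathcal{N}(B^{T})=\mathcal{N}(K^{T})$ by an analogous argument. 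You instead harvest both conclusions directly from the identity $BB^{\#}=B^{\#}B=P$ already established in the first half, using the standard fact (listed in the paper's preliminaries) that $BB^{\#}=P_{\mathcal{R}(B),\mathcal{N}(B)}$, so that equality of the two projectors forces equality of ranges and of null spaces. Your route is shorter and avoids the transpose argument entirely; it also makes explicit a point the paper leaves tacit, namely that the hypotheses on $\mathcal{R}(K+X-A+YU^{\#}L)$ and $\mathcal{N}(K+X-A+YU^{\#}L)$, together with $\mathrm{index}(A)=1$, are exactly what guarantee that $(K+X-A+YU^{\#}L)^{\#}$ exists in the first place, which legitimizes the defining formula for $B$. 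The paper's longer element-wise argument buys only independence from the projector characterization of $BB^{\#}$, which is not needed since that characterization is already part of its stated preliminaries.
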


\begin{proof}
Let $Z=X^{\#}(K+X-A+YU^{\#}L)K^{\#}.$ Since $\mathcal{R}(K+X-A+YU^{\#}L)=\mathcal{R}(A)$, $\mathcal{N}(K+X-A+YU^{\#}L)=\mathcal{N}(A)$ and $A=K-L=U-V=X-Y$ are proper splittings, we have $(K+X-A+YU^{\#}L)(K+X-A+YU^{\#}L)^{\#}=AA^{\#}=KK^{\#}=UU^{\#}=XX^{\#}$. Then,
\begin{align*}
ZB&=X^{\#}(K+X-A+YU^{\#}L)K^{\#}K(K+X-A+YU^{\#}L)^{\#}X\\
&=X^{\#}(K+X-A+YU^{\#}L)(K+X-A+YU^{\#}L)^{\#}X\\
&=X^{\#}X=XX^{\#}=KK^{\#}\\
&=K(K+X-A+YU^{\#}L)^{\#}(K+X-A+YU^{\#}L)K^{\#}\\
&=K(K+X-A+YU^{\#}L)^{\#}XX^{\#}(K+X-A+YU^{\#}L)K^{\#}\\
&=BZ.
\end{align*}
Now, $ZB=X^{\#}X$ and $Z=X^{\#}(K+X-A+YU^{\#}L)K^{\#}$ yield $ZBZ=X^{\#}XX^{\#}(K+X-A+YU^{\#}L)K^{\#}
=X^{\#}(K+X-A+YU^{\#}L)K^{\#}=Z$. Similarly, using $BZ=KK^{\#}$ and $B=K(K+X-A+YU^{\#}L)^{\#}X$, we have $BZB=KK^{\#}K(K+X-A+YU^{\#}L)^{\#}X=K(K+X-A+YU^{\#}L)^{\#}X=B$. Therefore, $Z=B^{\#}.$ \par
In order to show that $\mathcal{R}(B)=\mathcal{R}(A)$ and $\mathcal{N}(B)=\mathcal{N}(A)$,  we first prove that $\mathcal{N}(X)=\mathcal{N}(B)$ as $\mathcal{N}(A)=\mathcal{N}(X)$ follows from the fact that $A=X-Y$ is a proper splitting. Clearly,  $\mathcal{N}(X)\subseteq \mathcal{N}(B)$. Let $Bx=0$.
Pre-multiplying by $K^{\#}$ and using $K^{\#}K=(K+X-A+YU^{\#}L)^{\#}(K+X-A+YU^{\#}L)$, we get $(K+X-A+YU^{\#}L)^{\#}Xx=0$. Again, pre-multiplying by $(K+X-A+YU^{\#}L)$ and using $(K+X-A+YU^{\#}L)(K+X-A+YU^{\#}L)^{\#}=XX^{\#}$ yields $Xx=0$. So, $\mathcal{N}(X)=\mathcal{N}(B)$. Next, we have to show that $\mathcal{R}(A)=\mathcal{R}(B)$ which is  equivalent to $\mathcal{N}(A^{T})=\mathcal{N}(B^{T})$. Again, the fact $A=K-L$ is a proper splitting yields $\mathcal{N}(A^{T})=\mathcal{N}(K^{T})$. So, it is required to prove that  $\mathcal{N}(K^{T})=\mathcal{N}(B^{T})$. Clearly, $\mathcal{N}(K^{T})\subseteq \mathcal{N}(B^{T})$. Let $B^{T}x=0$. Pre-multiplying by $(X^{\#})^{T}$ we get $(X^{\#})^{T}X^{T}((K+X-A+YU^{\#}L)^{\#})^{T}K^{T}=0$, i.e., $(K(K+X-A+YU^{\#}L)^{\#}XX^{\#})^{T}x=0$. Since $XX^{\#}=(K+X-A+YU^{\#}L)(K+X-A+YU^{\#}L)^{\#}$, we have $((K+X-A+YU^{\#}L)^{\#})^{T}K^{T}x=0$. Again, pre-multiplying by $(K+X-A+YU^{\#}L)^{T}$ and using $K^{\#}K=(K+X-A+YU^{\#}L)^{\#}(K+X-A+YU^{\#}L)$, we get $K^{T}x=0$. Thus $\mathcal{N}(B^{T})=\mathcal{N}(K^{T})=\mathcal{N}(A^{T})$. Hence $\mathcal{R}(A)=\mathcal{R}(B)$.
 \end{proof}

In Lemma \ref{lem4.2}, $B^{\#}$ can also be expressed as
\begin{align}\label{eqn4.1}
B^{\#}&=X^{\#}(K+X-A+YU^{\#}L)K^{\#}\notag\\
&=X^{\#}KK^{\#}+X^{\#}XK^{\#}-X^{\#}AK^{\#}+X^{\#}YU^{\#}LK^{\#}\notag\\
&=X^{\#}+X^{\#}YK^{\#}+X^{\#}YU^{\#}LK^{\#}
\end{align}
and 
\begin{align}\label{eqn4.2}
  B^{\#}&=K^{\#}+X^{\#}LK^{\#}+X^{\#}YU^{\#}LK^{\#}. 
\end{align}

In the case of the two-step alternating iteration, we have the following result: a group version of \cite[Lemma 4.3]{giri:2017}.

\begin{corollary}
Let $A=U-V=X-Y$ be two proper splittings of $A$. If $B=U(U+X-A)^{\#}X$, $\mathcal{R}(U+X-A)=\mathcal{R}(A)$ and $\mathcal{N}(U+X-A)=\mathcal{N}(A)$, then $B^{\#}=X^{\#}(U+X-A)U^{\#}$, $\mathcal{R}(B)=\mathcal{R}(A)$ and $\mathcal{N}(B)=\mathcal{N}(A).$
\end{corollary}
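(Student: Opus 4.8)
The plan is to re-run the argument of Lemma~\ref{lem4.2} with the middle splitting removed: the outer factor $K$ is replaced by $U$, and the inner matrix $K+X-A+YU^{\#}L$ is replaced by the simpler $W:=U+X-A$. It is worth stressing at the outset that the statement is \emph{not} a literal specialization of Lemma~\ref{lem4.2}. Setting $K=U$, $L=V$ there leaves a spurious summand $YU^{\#}V$ inside the parenthesis, whereas forcing that summand to vanish by a trivial splitting (say $K=A$, $L=0$) degenerates $B$ to $A$. Consequently the Corollary requires its own proof, albeit one that copies the mechanism of Lemma~\ref{lem4.2} line for line. The first step is to extract from the hypotheses the projector identities that drive everything. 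Since $A$ has index one (the standing assumption of this section) and, by assumption, $\mathcal{R}(W)=\mathcal{R}(A)$, $\mathcal{N}(W)=\mathcal{N}(A)$, the matrix $W$ also has index one, so $W^{\#}$ exists (making the hypothesis's $(U+X-A)^{\#}$ well defined) and $WW^{\#}=W^{\#}W=AA^{\#}$. Combining this with Theorem~\ref{thm2.4}$(ii)$ for the proper splittings $A=U-V=X-Y$ shows that every projector in sight equals one and the same $P:=P_{\mathcal{R}(A),\mathcal{N}(A)}$, namely $WW^{\#}=W^{\#}W=UU^{\#}=U^{\#}U=XX^{\#}=X^{\#}X=P$.

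The second step verifies that $Z:=X^{\#}WU^{\#}$ is the group inverse of $B=UW^{\#}X$. Using $U^{\#}U=W^{\#}W$, $XX^{\#}=WW^{\#}$, the identity $WW^{\#}W=W$, and the idempotence of $P$, a short computation collapses both products to $P$: one finds $ZB=X^{\#}(WW^{\#})X=X^{\#}(XX^{\#})X=P$ and $BZ=U(W^{\#}W)U^{\#}=P$, so in particular $ZB=BZ$. The remaining two relations then follow from $PX^{\#}=X^{\#}$ (because $P=XX^{\#}$) and $PU=U$ (because $P=UU^{\#}$): indeed $ZBZ=PZ=X^{\#}WU^{\#}=Z$ and $BZB=PB=UW^{\#}X=B$. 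Hence $Z=B^{\#}$, which is exactly the claimed formula $B^{\#}=X^{\#}(U+X-A)U^{\#}$. As a byproduct, substituting $U+X-A=U+Y=X+V$ reproduces the two-step analogues of \eqref{eqn4.1}--\eqref{eqn4.2}, namely $B^{\#}=X^{\#}+X^{\#}YU^{\#}=U^{\#}+X^{\#}VU^{\#}$.

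The third and only mildly delicate step settles the range and null space of $B$. Because $A=X-Y$ and $A=U-V$ are proper splittings, we have $\mathcal{N}(A)=\mathcal{N}(X)$ and $\mathcal{N}(A^{T})=\mathcal{N}(U^{T})$, so it suffices to prove $\mathcal{N}(B)=\mathcal{N}(X)$ and $\mathcal{N}(B^{T})=\mathcal{N}(U^{T})$. The inclusions $\mathcal{N}(X)\subseteq\mathcal{N}(B)$ and $\mathcal{N}(U^{T})\subseteq\mathcal{N}(B^{T})$ are immediate from $B=UW^{\#}X$. For the reverse inclusions I would chase nullvectors exactly as in Lemma~\ref{lem4.2}: if $Bx=0$, premultiply by $U^{\#}$ and use $U^{\#}U=W^{\#}W$ to get $W^{\#}Xx=0$, then premultiply by $W$ and use $WW^{\#}=XX^{\#}$ to get $Xx=0$; if $B^{T}x=0$, premultiply by $(X^{\#})^{T}$ and use $XX^{\#}=WW^{\#}$ to reduce to $(W^{\#})^{T}U^{T}x=0$, then premultiply by $W^{T}$ and use $W^{\#}W=U^{\#}U$ to obtain $U^{T}x=0$. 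This yields $\mathcal{N}(B)=\mathcal{N}(A)$, and since $\mathcal{R}(B)=\mathcal{R}(A)$ is equivalent to $\mathcal{N}(B^{T})=\mathcal{N}(A^{T})$, also $\mathcal{R}(B)=\mathcal{R}(A)$. I expect this transpose chase to be the fiddliest part, since each arrow hinges on swapping one projector for another through the standing hypotheses $\mathcal{R}(W)=\mathcal{R}(A)$ and $\mathcal{N}(W)=\mathcal{N}(A)$; everything else is bookkeeping with the group-inverse identities assembled in the first step.
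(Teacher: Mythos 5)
Your proof is correct and follows essentially the same route as the paper: the paper states this result as a corollary of Lemma~\ref{lem4.2} with the proof omitted, and your argument is precisely that lemma's proof re-run with the middle splitting removed (outer factor $U$ in place of $K$, inner matrix $W=U+X-A$, the same group-inverse verification $ZB=BZ$, $ZBZ=Z$, $BZB=B$, and the same null-space/transpose chase). Your preliminary observation that the corollary is not a literal substitution instance of Lemma~\ref{lem4.2} --- so the argument must be repeated rather than merely invoked --- is accurate and a worthwhile clarification.
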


When we have proper G-weak regular splittings of type II, establishing convergence criteria for the iterative scheme \eqref{eqn1.6} does not seem straightforward. Therefore, we seek the help of the  matrix $\mathcal{S}=YX^{\#}VU^{\#}LK^{\#}$. The next result is about some important relations that $\mathcal{H}=X^{\#}YU^{\#}VK^{\#}L$ and $\mathcal{S}$ possess.

\begin{theorem}\label{thm4.3}
Let $A=K-L=U-V=X-Y$ be three proper splittings of $A$ and $\mathcal{S}=YX^{\#}VU^{\#}LK^{\#}.$ Then,\\
$(i)$ $AA^{\#}\mathcal{S}=\mathcal{S}=\mathcal{S}AA^{\#}$ and $A^{\#}A\mathcal{H}=\mathcal{H}=\mathcal{H}A^{\#}A$, where $\mathcal{H}$ is the iteration matrix of the iteration scheme \eqref{eqn1.6}.\\
$(ii)$ $\mathcal{S}=A\mathcal{H}A^{\#},$ $\mathcal{H}=A^{\#}\mathcal{S}A$ and $\rho(\mathcal{S})=\rho(\mathcal{H}).$\\
$(iii)$ $I-\mathcal{S}$ and $I-\mathcal{H}$ are invertible if $\mathcal{R}(K+X-A+YU^{\#}L)=\mathcal{R}(A)$ and $\mathcal{N}(K+X-A+YU^{\#}L)=\mathcal{N}(A).$
\end{theorem}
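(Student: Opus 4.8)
The plan is to run everything off the single projector $P:=AA^{\#}=A^{\#}A$, which by Theorem \ref{thm2.4}$(ii)$ equals $KK^{\#}=UU^{\#}=XX^{\#}$ and is the projector onto $\mathcal{R}(A)$ along $\mathcal{N}(A)$. First I would record the elementary consequences of the three proper splittings: $\mathcal{R}(L),\mathcal{R}(V),\mathcal{R}(Y)\subseteq\mathcal{R}(A)$ and $\mathcal{N}(A)\subseteq\mathcal{N}(L)\cap\mathcal{N}(V)\cap\mathcal{N}(Y)$ (for instance, if $x\in\mathcal{N}(A)=\mathcal{N}(U)$ then $Vx=Ux-Ax=0$), so that $PV=V$, $VP=V$, and likewise $PX^{\#}=X^{\#}P=X^{\#}$, etc. For $(i)$ these inclusions suffice: the leftmost factor of $\mathcal{H}=X^{\#}YU^{\#}VK^{\#}L$ is $X^{\#}$, so $\mathcal{R}(\mathcal{H})\subseteq\mathcal{R}(X^{\#})=\mathcal{R}(A)$ and hence $A^{\#}A\,\mathcal{H}=P\mathcal{H}=\mathcal{H}$, while the rightmost factor $L$ annihilates $\mathcal{N}(A)$, giving $\mathcal{H}A^{\#}A=\mathcal{H}$. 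The same argument applied to $\mathcal{S}=YX^{\#}VU^{\#}LK^{\#}$ (leftmost factor $Y$, rightmost factor $K^{\#}$ with $\mathcal{N}(K^{\#})=\mathcal{N}(A)$) yields $AA^{\#}\mathcal{S}=\mathcal{S}=\mathcal{S}AA^{\#}$.

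The heart of $(ii)$ is the commutation identity $A\,U^{\#}V=V\,U^{\#}A$, which I would prove by observing that both sides equal $V-VU^{\#}V$: indeed $AU^{\#}V=(U-V)U^{\#}V=UU^{\#}V-VU^{\#}V=V-VU^{\#}V$ using $UU^{\#}V=PV=V$, and $VU^{\#}A=VU^{\#}(U-V)=V-VU^{\#}V$ using $VU^{\#}U=VP=V$. The analogous identities $AX^{\#}Y=YX^{\#}A$ and $AK^{\#}L=LK^{\#}A$ hold for the other two splittings. Pushing $A$ rightward through $\mathcal{H}$ one factor-pair at a time then converts $A\mathcal{H}A^{\#}=AX^{\#}YU^{\#}VK^{\#}LA^{\#}$ into $YX^{\#}VU^{\#}LK^{\#}AA^{\#}=\mathcal{S}$, the trailing $AA^{\#}$ being absorbed since $K^{\#}AA^{\#}=K^{\#}$. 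The complementary relation $\mathcal{H}=A^{\#}\mathcal{S}A$ follows immediately from $(i)$, because $A^{\#}\mathcal{S}A=A^{\#}A\,\mathcal{H}\,A^{\#}A=\mathcal{H}$. For the spectral radii I would show the nonzero spectra coincide: if $\mathcal{H}v=\lambda v$ with $\lambda\neq0$, then $v\in\mathcal{R}(\mathcal{H})\subseteq\mathcal{R}(A)$, so $Av\neq0$ (as $A$ is injective on $\mathcal{R}(A)$) and $\mathcal{S}(Av)=A\mathcal{H}A^{\#}Av=A\mathcal{H}v=\lambda(Av)$; the symmetric argument with $\mathcal{H}=A^{\#}\mathcal{S}A$ gives the reverse, whence $\rho(\mathcal{S})=\rho(\mathcal{H})$.

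For $(iii)$ I would invoke Lemma \ref{lem4.2} with $B=K(K+X-A+YU^{\#}L)^{\#}X$, whose inverse $B^{\#}$ is given by \eqref{eqn4.1}--\eqref{eqn4.2} and satisfies $\mathcal{R}(B)=\mathcal{R}(A)$, $\mathcal{N}(B)=\mathcal{N}(A)$. The key algebraic identity to establish is
\[ B^{\#}A = A^{\#}A-\mathcal{H}. \]
Expanding $B^{\#}A$ from \eqref{eqn4.1} with the clean reductions $X^{\#}A=P-X^{\#}Y$ and $K^{\#}A=P-K^{\#}L$, all terms telescope except $P$ and a residue $X^{\#}Y\big(U^{\#}L-K^{\#}L-U^{\#}LK^{\#}L\big)$, which collapses to $-X^{\#}YU^{\#}VK^{\#}L=-\mathcal{H}$ once $U^{\#}(U-K)K^{\#}L=K^{\#}L-U^{\#}L$ (here $V-L=U-K$ and $PL=L$ enter). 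Since $A$ and $B^{\#}$ are both bijections of $\mathcal{R}(A)$ onto itself (index one, together with $\mathcal{R}(B^{\#})=\mathcal{R}(A)$, $\mathcal{N}(B^{\#})=\mathcal{N}(A)$), this identity shows $(I-\mathcal{H})|_{\mathcal{R}(A)}=(B^{\#}A)|_{\mathcal{R}(A)}$ is invertible; as $(i)$ forces $\mathcal{H}$ to vanish on $\mathcal{N}(A)$ and to preserve the splitting $\mathbb{R}^{n}=\mathcal{R}(A)\oplus\mathcal{N}(A)$, the operator $I-\mathcal{H}$ acts as the identity on $\mathcal{N}(A)$ and is therefore invertible on all of $\mathbb{R}^{n}$. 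Finally $I-\mathcal{S}$ is invertible because, by $(ii)$, $\mathcal{S}$ and $\mathcal{H}$ have the same nonzero eigenvalues and $1\neq0$ is not an eigenvalue of $\mathcal{H}$.

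I expect the main obstacle to be the identity $B^{\#}A=A^{\#}A-\mathcal{H}$: the expansion produces several terms whose cancellation rests on the cross-relations $V-L=U-K$ and $Y-L=X-K$ among the three splittings and on repeatedly absorbing $P$, and one must be careful to argue that invertibility of $I-\mathcal{H}$ genuinely reduces to invertibility of its restriction to $\mathcal{R}(A)$ rather than asserting it directly.
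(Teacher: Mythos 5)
Your proof is correct, and for parts $(i)$ and $(ii)$ it coincides with the paper's own argument: the same range/null-space inclusions give the projector identities, and the same commutation trick (you push $A$ through each factor pair via $AU^{\#}V=VU^{\#}A$; the paper pushes $A^{\#}$ via $U^{\#}VA^{\#}=A^{\#}VU^{\#}$), followed by the identical nonzero-eigenvector correspondence, gives $\mathcal{S}=A\mathcal{H}A^{\#}$, $\mathcal{H}=A^{\#}\mathcal{S}A$ and $\rho(\mathcal{S})=\rho(\mathcal{H})$. In part $(iii)$ you use the same two ingredients as the paper --- Lemma \ref{lem4.2} and the telescoping expansion in which $Y=X-A$, $V=U-A$, $L=K-A$ are substituted and the projector $AA^{\#}$ is repeatedly absorbed --- but you organize them differently. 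The paper argues twice by contradiction: it takes a fixed point $x=\mathcal{S}x$, expands vector-by-vector to reach $x=x-AB^{\#}x$, derives $x=0$ from $\mathcal{R}(B)=\mathcal{R}(A)$ and $\mathcal{N}(B)=\mathcal{N}(A)$, and then repeats the simplification for $x=\mathcal{H}x$ to reach $B^{\#}Ax=0$. You instead prove the operator identity $B^{\#}A=A^{\#}A-\mathcal{H}$ outright (equivalently $B^{\#}=(I-\mathcal{H})A^{\#}$, which is precisely the identity the paper itself establishes later, inside the proof of Lemma \ref{lem4.8}), conclude that $I-\mathcal{H}$ is invertible because it restricts to the bijection $B^{\#}A$ on $\mathcal{R}(A)$ and to the identity on $\mathcal{N}(A)$, and then obtain invertibility of $I-\mathcal{S}$ for free, since your proof of $(ii)$ shows the nonzero spectra of $\mathcal{S}$ and $\mathcal{H}$ coincide and $1\neq 0$ is not an eigenvalue of $\mathcal{H}$. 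Your organization buys two things: the long expansion is performed once instead of twice, and the role of the hypothesis on $K+X-A+YU^{\#}L$ (making $B^{\#}$ a bijection of $\mathcal{R}(A)$ onto itself) is more transparent. What the paper's version buys is that it stays entirely at the level of vectors and never needs your restriction-and-direct-sum argument; in your route that step is genuinely needed (as you correctly flag), and it is justified because the section's standing index-one assumption gives $\mathbb{R}^{n}=\mathcal{R}(A)\oplus\mathcal{N}(A)$ while part $(i)$ shows $\mathcal{H}$ preserves both summands and vanishes on $\mathcal{N}(A)$.
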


\begin{proof}
$(i)$ Clearly, we have $\mathcal{R}(\mathcal{S})\subseteq \mathcal{R}(Y)$. But $\mathcal{R}(Y)\subseteq \mathcal{R}(A)$ as  $A=X-Y$ is a proper splitting.  So, $\mathcal{R}(\mathcal{S})\subseteq \mathcal{R}(Y)\subseteq \mathcal{R}(A)$. Thus, $AA^{\#}\mathcal{S}=\mathcal{S}$. Similarly, since $A=X-Y$ is a proper splitting, it is easy to see that $\mathcal{S}AA^{\#}=YX^{\#}VU^{\#}LK^{\#}AA^{\#}=YX^{\#}VU^{\#}LK^{\#}KK^{\#}=\mathcal{S}$. Applying a similar argument, we get the other equality.\\
$(ii)$ Since $AA^{\#}=XX^{\#}$, we get $X^{\#}YA^{\#}=X^{\#}(X-A)A^{\#}=X^{\#}XA^{\#}-X^{\#}AA^{\#}=A^{\#}AA^{\#}-X^{\#}XX^{\#}=A^{\#}XX^{\#}-A^{\#}AX^{\#}=A^{\#}(X-A)X^{\#}=A^{\#}YX^{\#}$. Similarly, $U^{\#}VA^{\#}=A^{\#}VU^{\#}$ and $K^{\#}LA^{\#}=A^{\#}LK^{\#}$. So,
\begin{align*}
\mathcal{S}&=AA^{\#}\mathcal{S}\\
&=AA^{\#}YX^{\#}VU^{\#}LK^{\#}\\
&=AX^{\#}YA^{\#}VU^{\#}LK^{\#}\\
&=AX^{\#}YU^{\#}VA^{\#}LK^{\#}\\
&=AX^{\#}YU^{\#}VK^{\#}LA^{\#}\\
&=A\mathcal{H}A^{\#}.
\end{align*}
\allowdisplaybreaks
We then have $A^{\#}\mathcal{S}=A^{\#}A\mathcal{H}A^{\#}=\mathcal{H}A^{\#}$.  Now, post-multiplying by $A$, we get $\mathcal{H}=A^{\#}\mathcal{S}A$. To show $\rho(\mathcal{S})=\rho(\mathcal{H})$, consider the eigenvalue problem $\mathcal{S}x=\lambda x$, where $\lambda$ is any non-zero eigenvalue of $\mathcal{S}$, and $x$ is the corresponding eigenvector. Then, $x\in \mathcal{R}(\mathcal{S})\subseteq \mathcal{R}(A)$. Now, $\lambda x=\mathcal{S}x=A\mathcal{H}A^{\#}x$. Pre-multiplying by $A^{\#},$ we get $\lambda y=\mathcal{H}y$, where $y=A^{\#}x.$ Therefore, $\lambda$ is an eigenvalue of $\mathcal{H}$ if $y\neq0$. Suppose that $y=A^{\#}x=0$. Then, $x \in  \mathcal{N}(A^{\#})=\mathcal{N}(A).$ So, we have $x\in \mathcal{R}(A)\cap \mathcal{N}(A)$. Since $index(A)=1$, we have $\mathcal{R}(A)\cap \mathcal{N}(A)=\{0\}$. This implies that $x=0$ is a contradiction. Hence $y\neq 0$. Thus, $\sigma(\mathcal{S})\subseteq \sigma(\mathcal{H})$. For the other way, consider $\mathcal{H}y=\mu y$, where $\mu$ is any non-zero eigenvalue of $\mathcal{H}$, and $y$ is its corresponding eigenvector. Then, $y\in \mathcal{R}(\mathcal{H})\subseteq \mathcal{R}(A)$. Since $\mathcal{H}=A^{\#}\mathcal{S}A$, we have, $\mu y=A^{\#}\mathcal{S}Ay$. Now, pre-multiplying by A, we get $\mu z=\mathcal{S}z$, where $z=Ay$. If $z=0$, then $y\in \mathcal{N}(A)$ which implies $y\in \mathcal{R}(A)\cap \mathcal{N}(A)$ which further yields $y=0$, a contradiction. Thus, $\sigma(\mathcal{H})\subseteq \sigma(\mathcal{S})$. Hence $\rho(\mathcal{S})=\rho(\mathcal{H})$.\\
$(iii)$ We will prove this by the method of contradiction. Assume that $I-\mathcal{S}$ is singular. Then, 1 is an eigenvalue of $\mathcal{S}$. Therefore, there exists $x\neq 0$ such that $x=\mathcal{S}x$ which implies $x\in \mathcal{R}(\mathcal{S})\subseteq \mathcal{R}(Y) \subseteq \mathcal{R}(A)=\mathcal{R}(X)=\mathcal{R}(U)=\mathcal{R}(K)$ and thus $x=AA^{\#}x=XX^{\#}x=UU^{\#}x=KK^{\#}x$. Then,
\begin{eqnarray*}
x=\mathcal{S}x&=&YX^{\#}VU^{\#}LK^{\#}x\\
&=&(X-A)X^{\#}(U-A)U^{\#}(K-A)K^{\#}x \\
&=&(XX^{\#}-AX^{\#})(UU^{\#}-AU^{\#})(KK^{\#}-AK^{\#})x\\
&=&(AA^{\#}-AX^{\#})(AA^{\#}-AU^{\#})(AA^{\#}-AK^{\#})x\\
&=&(AA^{\#}-AX^{\#})(AA^{\#}-AU^{\#})(AA^{\#}x-AK^{\#}x)\\
&=&(AA^{\#}-AX^{\#})(AA^{\#}-AU^{\#})(x-AK^{\#}x)\\
&=&(AA^{\#}-AX^{\#})(AA^{\#}x-AA^{\#}AK^{\#}x-AU^{\#}x+AU^{\#}AK^{\#}x)\\
&=&(AA^{\#}-AX^{\#})(x-AK^{\#}x-AU^{\#}x+AU^{\#}AK^{\#}x)\\
&=&(x-AK^{\#}x-AU^{\#}x+AU^{\#}AK^{\#}x-AX^{\#}x+AX^{\#}AK^{\#}x+AX^{\#}AU^{\#}x\\
&&-AX^{\#}AU^{\#}AK^{\#}x)\\
&=&x-A(K^{\#}+U^{\#}-U^{\#}AK^{\#}+X^{\#}-X^{\#}AK^{\#}-X^{\#}AU^{\#}\\
&&+X^{\#}AU^{\#}AK^{\#})x\\
&=&x-A(K^{\#}KK^{\#}+U^{\#}UU^{\#}+X^{\#}-U^{\#}AK^{\#}-X^{\#}AK^{\#}-X^{\#}AU^{\#}\\
&&+X^{\#}AU^{\#}AK^{\#})x\\
&=&x-A(X^{\#}XK^{\#}+X^{\#}XU^{\#}+X^{\#}-U^{\#}AK^{\#}-X^{\#}AK^{\#}-X^{\#}AU^{\#}\\
&&+X^{\#}AU^{\#}AK^{\#})x\\
&=& x-A(X^{\#}XK^{\#}+X^{\#}XU^{\#}UU^{\#}+X^{\#}XX^{\#}-U^{\#}UU^{\#}AK^{\#}-X^{\#}AK^{\#}\\
&&-X^{\#}AU^{\#}UU^{\#}+X^{\#}AU^{\#}AK^{\#})x\\
&=&x-A(X^{\#}XK^{\#}+X^{\#}XU^{\#}KK^{\#}+X^{\#}KK^{\#}-X^{\#}XU^{\#}AK^{\#}-X^{\#}AK^{\#}\\
&&-X^{\#}AU^{\#}KK^{\#}+X^{\#}AU^{\#}AK^{\#})x\\
&=&x-A(X^{\#}(X+K-A+XU^{\#}K-XU^{\#}A-AU^{\#}K+AU^{\#}A)K^{\#})x\\
&=&x-A(X^{\#}(X+K-A+YU^{\#}L)K^{\#})x\\
&=&x-AB^{\#}x,
\end{eqnarray*}
where $B=K(X+K-A+YU^{\#}L)^{\#}X$. By Lemma \ref{lem4.2}, given $\mathcal{R}(K+X-A+YU^{\#}L)=\mathcal{R}(A)$ and $\mathcal{N}(K+X-A+YU^{\#}L)=\mathcal{N}(A)$, we have $\mathcal{R}(A)=\mathcal{R}(B)$ and $\mathcal{N}(A)=\mathcal{N}(B)$. Now,
$AB^{\#}x=0$ implies $B^{\#}x\in \mathcal{N}(A)=\mathcal{N}(B)$ and thus $BB^{\#}x=0.$  Again, $x\in \mathcal{R}(A)=\mathcal{R}(B)$ yields $x=BB^{\#}x=0$, a contradiction. Thus, $I-\mathcal{S}$ is invertible.
Now, to prove other part assume that $I-\mathcal{H}$ is singular, then there exists $x\neq 0$ such that $x=\mathcal{H}x.$ So, $x\in \mathcal{R}(X^{\#})=\mathcal{R}(X)=\mathcal{R}(A).$ Substituting $Y=X-A$, $V=U-A$ and $L=K-A$ in $x=\mathcal{H}x$ and then simplifying as in first part, we get $B^{\#}Ax=0.$ Pre-multiplying by $B$ and using the fact $\mathcal{R}(B)=\mathcal{R}(A),$ we get $Ax=0$ which implies $x \in \mathcal{N}(A)\cap \mathcal{R}(A).$ Thus, $x=0$, a contradiction.
Hence $I-\mathcal{H}$ is invertible.
\end{proof}

When we have two proper splittings, the above result reduces to the following form.

\begin{corollary}
Let $A=U-V=X-Y$ be two proper splittings of $A$ and $\mathcal{S}=YX^{\#}VU^{\#}.$ Then,\\
$(i)$ $AA^{\#}\mathcal{S}=\mathcal{S}=\mathcal{S}AA^{\#}$ and $A^{\#}A\mathcal{H}=\mathcal{H}=\mathcal{H}A^{\#}A$, where $\mathcal{H}$ is the iteration matrix of the iteration scheme \eqref{eqn1.7}.\\
$(ii)$ $\mathcal{S}=A\mathcal{H}A^{\#},$ $\mathcal{H}=A^{\#}\mathcal{S}A$ and $\rho(\mathcal{S})=\rho(\mathcal{H}).$\\
$(iii)$ $I-\mathcal{S}$ and $I-\mathcal{H}$ are invertible if $\mathcal{R}(U+X-A)=\mathcal{R}(A)$ and $\mathcal{N}(U+X-A)=\mathcal{N}(A).$
\end{corollary}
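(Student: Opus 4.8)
The plan is to mirror the proof of Theorem \ref{thm4.3} verbatim, simply deleting the third splitting $K-L$ (equivalently, every factor $K^{\#}L$ and $LK^{\#}$), since the two-splitting matrices $\mathcal{H}=X^{\#}YU^{\#}V$ and $\mathcal{S}=YX^{\#}VU^{\#}$ arise from their three-splitting counterparts by dropping exactly those factors. The only structural facts I need are collected in Theorem \ref{thm2.4}: for proper splittings $A=U-V=X-Y$ with $A^{\#}$ existing, one has $AA^{\#}=UU^{\#}=XX^{\#}$ and $A^{\#}A=U^{\#}U=X^{\#}X=P_{\mathcal{R}(A),\mathcal{N}(A)}$, together with $\mathcal{R}(U)=\mathcal{R}(X)=\mathcal{R}(A)$ and $\mathcal{N}(U)=\mathcal{N}(X)=\mathcal{N}(A)$.

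For part $(i)$, I would note $\mathcal{R}(\mathcal{S})\subseteq\mathcal{R}(Y)\subseteq\mathcal{R}(A)$ (since $Y=X-A$ and $A=X-Y$ is proper), which gives $AA^{\#}\mathcal{S}=\mathcal{S}$; the equality $\mathcal{S}AA^{\#}=\mathcal{S}$ follows from $\mathcal{S}AA^{\#}=YX^{\#}VU^{\#}UU^{\#}=YX^{\#}VU^{\#}=\mathcal{S}$ using $AA^{\#}=UU^{\#}$ and $U^{\#}UU^{\#}=U^{\#}$. The two statements for $\mathcal{H}$ are symmetric, using $A^{\#}A=X^{\#}X$ on the left and $A^{\#}A=U^{\#}U$ on the right together with $VU^{\#}U=V$, which holds because $\mathcal{N}(A)=\mathcal{N}(U)\subseteq\mathcal{N}(V)$.

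For part $(ii)$, the engine is the pair of commutation identities $X^{\#}YA^{\#}=A^{\#}YX^{\#}$ and $U^{\#}VA^{\#}=A^{\#}VU^{\#}$, each proved exactly as in Theorem \ref{thm4.3}$(ii)$ from $AA^{\#}=XX^{\#}$, $A^{\#}A=X^{\#}X$ (respectively with $U$). Chaining them gives $\mathcal{S}=AA^{\#}\mathcal{S}=AA^{\#}YX^{\#}VU^{\#}=AX^{\#}YA^{\#}VU^{\#}=AX^{\#}YU^{\#}VA^{\#}=A\mathcal{H}A^{\#}$; then $A^{\#}\mathcal{S}=A^{\#}A\mathcal{H}A^{\#}=\mathcal{H}A^{\#}$, and post-multiplication by $A$ yields $\mathcal{H}=A^{\#}\mathcal{S}A$, both invoking part $(i)$. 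The identity $\rho(\mathcal{S})=\rho(\mathcal{H})$ I would obtain by the same eigenvalue transfer: if $\mathcal{S}x=\lambda x$ with $\lambda\neq0$ then $x\in\mathcal{R}(\mathcal{S})\subseteq\mathcal{R}(A)$ and $y=A^{\#}x$ satisfies $\mathcal{H}y=\lambda y$ with $y\neq0$, because $\mathrm{index}(A)=1$ forces $\mathcal{R}(A)\cap\mathcal{N}(A)=\{0\}$; the reverse inclusion is symmetric, so the nonzero spectra coincide.

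Part $(iii)$ is where the real work sits and is the step I expect to be the main obstacle. Assuming $I-\mathcal{S}$ singular gives $x\neq0$ with $x=\mathcal{S}x$ and $x\in\mathcal{R}(A)$, so $x=AA^{\#}x=XX^{\#}x=UU^{\#}x$. Substituting $Y=X-A$, $V=U-A$ and expanding
\[
\mathcal{S}x=(AA^{\#}-AX^{\#})(AA^{\#}-AU^{\#})x,
\]
I would collapse the product using $AA^{\#}x=x$ and $AA^{\#}A=A$ to reach $x=x-A\big(U^{\#}+X^{\#}-X^{\#}AU^{\#}\big)x$. The delicate algebraic point is to recognize $U^{\#}+X^{\#}-X^{\#}AU^{\#}=X^{\#}(U+X-A)U^{\#}=B^{\#}$, with $B=U(U+X-A)^{\#}X$; this uses the projector identities $X^{\#}XU^{\#}=U^{\#}$ and $X^{\#}UU^{\#}=X^{\#}$. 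Invoking the earlier Corollary (the group version of \cite[Lemma 4.3]{giri:2017}) under the hypotheses $\mathcal{R}(U+X-A)=\mathcal{R}(A)$ and $\mathcal{N}(U+X-A)=\mathcal{N}(A)$ then gives $\mathcal{R}(B)=\mathcal{R}(A)$ and $\mathcal{N}(B)=\mathcal{N}(A)$. Hence $AB^{\#}x=0$ forces $B^{\#}x\in\mathcal{N}(A)=\mathcal{N}(B)$, so $BB^{\#}x=0$, and since $x\in\mathcal{R}(A)=\mathcal{R}(B)$ we conclude $x=BB^{\#}x=0$, a contradiction. The invertibility of $I-\mathcal{H}$ follows by the symmetric substitution in $x=\mathcal{H}x$, reducing to $B^{\#}Ax=0$, pre-multiplying by $B$ to get $Ax=0$, and again using $\mathcal{R}(A)\cap\mathcal{N}(A)=\{0\}$.
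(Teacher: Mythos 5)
Your proposal is correct and is essentially the paper's intended argument: the paper states this corollary as the two-splitting specialization of Theorem \ref{thm4.3}, and your proof reproduces that theorem's proof with the $K^{\#}L$ and $LK^{\#}$ factors deleted, including the key reduction in part $(iii)$ to $B^{\#}=X^{\#}(U+X-A)U^{\#}$ via the two-splitting corollary of Lemma \ref{lem4.2}. All the supporting identities you invoke ($AA^{\#}=UU^{\#}=XX^{\#}$, $A^{\#}A=U^{\#}U=X^{\#}X$, the commutation relations, and $\mathcal{R}(A)\cap\mathcal{N}(A)=\{0\}$ from index one) are exactly those used in the paper.
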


Now, we provide  the  convergence result for the iterative scheme \eqref{eqn1.6} when the given splittings are proper G-weak regular splittings of type II.

\begin{theorem}\label{thm4.4}Let $A=K-L=U-V=X-Y$ be three proper G-weak regular splittings of type II of a group monotone matrix $A$ and $\mathcal{S}=YX^{\#}VU^{\#}LK^{\#}$. Then $\rho(\mathcal{H})=\rho(X^{\#}YU^{\#}VK^{\#}L)<1$.
 \end{theorem}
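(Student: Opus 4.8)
The plan is to bypass $\mathcal{H}$ entirely and work with the nonnegative companion $\mathcal{S}=YX^{\#}VU^{\#}LK^{\#}$: the type II hypotheses give $YX^{\#}\ge 0$, $VU^{\#}\ge 0$ and $LK^{\#}\ge 0$, so that $\mathcal{S}\ge 0$ as a product of three nonnegative matrices, whereas $\mathcal{H}$ itself need not be nonnegative. By Theorem~\ref{thm4.3}$(ii)$ we have $\rho(\mathcal{H})=\rho(\mathcal{S})$, so it suffices to prove $\rho(\mathcal{S})<1$. Applying Theorem~\ref{frob1} to $\mathcal{S}\ge 0$, I would fix the Perron eigenpair $\mathcal{S}x=\lambda x$ with $\lambda=\rho(\mathcal{S})$, $x\ge 0$, $x\neq 0$. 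If $\lambda=0$ there is nothing to prove, so assume $\lambda>0$; then $x=\lambda^{-1}\mathcal{S}x\in\mathcal{R}(\mathcal{S})\subseteq\mathcal{R}(Y)\subseteq\mathcal{R}(A)$, whence $AA^{\#}x=x$, and since $A$ is group monotone $A^{\#}x\ge 0$. Moreover $A^{\#}x\neq 0$, because $A^{\#}x=0$ would put $x\in\mathcal{R}(A)\cap\mathcal{N}(A)=\{0\}$ by index one.

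The heart of the argument is to peel the three nonnegative factors off $\mathcal{S}x$ one at a time and track them through $A^{\#}$. Writing $u=LK^{\#}x$ and $v=VU^{\#}u$, so that $\lambda x=YX^{\#}v$, I would use the proper-splitting identities $LK^{\#}=AA^{\#}-AK^{\#}$, $VU^{\#}=AA^{\#}-AU^{\#}$ and $YX^{\#}=AA^{\#}-AX^{\#}$ (immediate from $L=K-A$, $V=U-A$, $Y=X-A$ together with $KK^{\#}=UU^{\#}=XX^{\#}=AA^{\#}$). Since $x,u,v\in\mathcal{R}(A)$, these reduce to $u=x-AK^{\#}x$, $v=u-AU^{\#}u$ and $\lambda x=v-AX^{\#}v$. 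Premultiplying each by $A^{\#}$ and using $A^{\#}AK^{\#}=K^{\#}$, $A^{\#}AU^{\#}=U^{\#}$, $A^{\#}AX^{\#}=X^{\#}$ (all valid because $\mathcal{R}(K^{\#})=\mathcal{R}(U^{\#})=\mathcal{R}(X^{\#})=\mathcal{R}(A)$) yields $A^{\#}u=A^{\#}x-K^{\#}x$, $A^{\#}v=A^{\#}u-U^{\#}u$ and $\lambda A^{\#}x=A^{\#}v-X^{\#}v$. Because $K^{\#},U^{\#},X^{\#}\ge 0$ and $x,u,v\ge 0$, the three subtracted terms are nonnegative, giving the telescoping chain $\lambda A^{\#}x\le A^{\#}v\le A^{\#}u\le A^{\#}x$.

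From $\lambda A^{\#}x\le A^{\#}x$ with $A^{\#}x\ge 0$, $A^{\#}x\neq 0$, comparing any coordinate in which $A^{\#}x$ is positive forces $\lambda\le 1$. To upgrade this to a strict inequality I would argue that $\lambda=1$ collapses the chain to equalities, in particular $A^{\#}u=A^{\#}x$, so $K^{\#}x=A^{\#}x-A^{\#}u=0$; then $x\in\mathcal{N}(K^{\#})=\mathcal{N}(K)=\mathcal{N}(A)$, contradicting $x\in\mathcal{R}(A)$, $x\neq 0$, under index one. Hence $\lambda=\rho(\mathcal{S})<1$ and therefore $\rho(\mathcal{H})<1$. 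The main obstacle is precisely the failure of $\mathcal{H}$ (and of one-sided factors such as $X^{\#}Y$) to be nonnegative, which blocks a direct Perron--Frobenius attack on the iteration matrix; the device that resolves it is the chain above, in which the monotonicity of $A^{\#}$ is transferred through the \emph{nonnegative} one-sided products $K^{\#},U^{\#},X^{\#}$ rather than through the two-sided matrix $\mathcal{H}$. A secondary point worth verifying is that this route needs neither the range/null-space conditions nor the invertibility statement of Theorem~\ref{thm4.3}$(iii)$, so the hypotheses as stated already suffice.
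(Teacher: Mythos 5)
Your proposal is correct, and although it shares the paper's skeleton, the decisive step is carried out by a genuinely different mechanism. Both arguments pass to the nonnegative companion $\mathcal{S}=YX^{\#}VU^{\#}LK^{\#}$ and invoke Theorem \ref{thm4.3}$(ii)$ to reduce the claim to $\rho(\mathcal{S})<1$; moreover, your telescoping relations $A^{\#}u=A^{\#}x-K^{\#}x$, $A^{\#}v=A^{\#}u-U^{\#}u$, $\lambda A^{\#}x=A^{\#}v-X^{\#}v$ are exactly the paper's matrix identity $A^{\#}(I-\mathcal{S})=K^{\#}+U^{\#}LK^{\#}+X^{\#}VU^{\#}LK^{\#}\geq 0$ evaluated at the Perron vector. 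The divergence is in how $\rho(\mathcal{S})<1$ is extracted from this identity. The paper multiplies the nonnegative matrix $A^{\#}(I-\mathcal{S})$ by the partial sums $I+\mathcal{S}+\cdots+\mathcal{S}^{m}$, bounds the product by $A^{\#}$, and concludes that the partial sums stay bounded, hence $\mathcal{S}^{m}\to 0$. You finish spectrally: Perron--Frobenius (Theorem \ref{frob1}) supplies $\mathcal{S}x=\lambda x$ with $x\geq 0$, a coordinatewise comparison in $\lambda A^{\#}x\leq A^{\#}x$ gives $\lambda\leq 1$ (correctly done by hand, since Theorem \ref{frob2}$(ii)$ would require the strict positivity $A^{\#}x>0$), and the equality-case analysis $\lambda=1\Rightarrow K^{\#}x=0\Rightarrow x\in\mathcal{N}(K^{\#})\cap\mathcal{R}(A)=\mathcal{N}(A)\cap\mathcal{R}(A)=\{0\}$ gives strictness. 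Your route buys rigor at the one place the paper is thin: the inference that boundedness of $A^{\#}(I-\mathcal{S})\sum_{k=0}^{m}\mathcal{S}^{k}$ forces boundedness of $\sum_{k=0}^{m}\mathcal{S}^{k}$ is asserted rather than proved, and it is delicate precisely when $A^{\#}(I-\mathcal{S})$ has a zero column, which happens exactly when some standard basis vector lies in $\mathcal{N}(A)$; your eigenvector argument never needs that inference. What the paper's computation buys in exchange is the explicit nonnegative matrix $A^{\#}(I-\mathcal{S})$, which is recycled later as $\overline{B}^{\#}\geq 0$ in the proof of Theorem \ref{thm4.9}, whereas your argument produces only the spectral conclusion. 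Your closing observation that neither the range/null-space conditions nor Theorem \ref{thm4.3}$(iii)$ are needed is consistent with the theorem as stated and with the paper's own proof.
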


\begin{proof}
Since $A=K-L=U-V=X-Y$ are proper G-weak regular splittings of type II, we have
\begin{eqnarray*}
0\leq \mathcal{S}&=&YX^{\#}VU^{\#}LK^{\#}\\
&=&(X-A)X^{\#}(U-A)U^{\#}(K-A)K^{\#}\\
&=&(XX^{\#}-AX^{\#})(UU^{\#}-AU^{\#})(KK^{\#}-AK^{\#})\\
&=&(AA^{\#}-AX^{\#})(AA^{\#}-AU^{\#})(AA^{\#}-AK^{\#})\\
&=&AA^{\#}-AK^{\#}-AU^{\#}-AX^{\#}+AU^{\#}AK^{\#}+AX^{\#}AK^{\#}+AX^{\#}AU^{\#}\\
&&-AX^{\#}AU^{\#}AK^{\#}.\end{eqnarray*}
Then
\begin{eqnarray*}
A^{\#}(I-\mathcal{S})&=&A^{\#}-A^{\#}\mathcal{S}\\
&=&A^{\#}AA^{\#}-A^{\#}\mathcal{S}\\
&=&A^{\#}(AA^{\#}-\mathcal{S})\\
&=&A^{\#}(AA^{\#}-AA^{\#}+AK^{\#}+AU^{\#}+AX^{\#}-AU^{\#}AK^{\#}-AX^{\#}AK^{\#}\\
&&-AX^{\#}AU^{\#}+AX^{\#}AU^{\#}AK^{\#})\\
&=&A^{\#}AK^{\#}+A^{\#}AU^{\#}+A^{\#}AX^{\#}-A^{\#}AU^{\#}AK^{\#}-A^{\#}AX^{\#}AK^{\#}\\
&&-A^{\#}AX^{\#}AU^{\#}+A^{\#}AX^{\#}AU^{\#}AK^{\#}\\
&=&K^{\#}+U^{\#}+X^{\#}-U^{\#}AK^{\#}-X^{\#}AK^{\#}-X^{\#}AU^{\#}+X^{\#}AU^{\#}AK^{\#}\\
&=&K^{\#}+U^{\#}(K-A)K^{\#}+X^{\#}(U-A)U^{\#}+X^{\#}(AU^{\#}A-A)K^{\#}\\
&=&K^{\#}+U^{\#}LK^{\#}+X^{\#}VU^{\#}+X^{\#}(AU^{\#}A-UU^{\#}A)K^{\#}\\
&=&K^{\#}+U^{\#}LK^{\#}+X^{\#}VU^{\#}-X^{\#}(U-A)U^{\#}AK^{\#}\\
&=&K^{\#}+U^{\#}LK^{\#}+X^{\#}VU^{\#}-X^{\#}VU^{\#}(K-L)K^{\#}\\
&=&K^{\#}+U^{\#}LK^{\#}+X^{\#}VU^{\#}LK^{\#}\geq 0.
\end{eqnarray*}
Therefore,
\begin{align*}
0&\leq A^{\#}(I-\mathcal{S})(I+\mathcal{S}+\mathcal{S}^{2}+\mathcal{S}^{3}+...+\mathcal{S}^{m})\\
&=A^{\#}(I-\mathcal{S}^{m+1})\\
&\leq A^{\#},
\end{align*}
for each  integer
$m$, and it follows
that as $m\to \infty$, the entries of the matrix
$I+\mathcal{S}+\mathcal{S}^2+\dots+\mathcal{S}^m$
remains bounded.
Since
$\mathcal{S}\geq 0$, therefore, 
the sequence of partial sums of the series
$\displaystyle \sum_{m=0}^{\infty}\mathcal{S}^{m}$
converges and consequently,   $\displaystyle \lim_{m\to \infty}\mathcal{S}^m=0$.
Thus $\rho(\mathcal{S})<1$ resulting $\rho(\mathcal{H})<1$ by Theorem \ref{thm4.3}.
 \end{proof}

In the non-singular matrix $A$, we have the following result.

\begin{corollary}
Let $A=K-L=U-V=X-Y$ be three weak regular splittings of type II of a monotone matrix $A$ and $\mathcal{S}=YX^{-1}VU^{-1}LK^{-1}$, then $\rho(\mathcal{H})=\rho(X^{-1}YU^{-1}VK^{-1}L)<1.$
\end{corollary}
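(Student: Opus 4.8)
The plan is to read this corollary as the exact nonsingular specialization of Theorem \ref{thm4.4}, so that no new analytic work is needed: I would verify that, when $A$ is nonsingular, the three weak regular splittings of type II satisfy the hypotheses of Theorem \ref{thm4.4}, and then invoke that theorem verbatim.

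The verification has two ingredients. First, a nonsingular $A$ trivially has index one, since $r(A)=r(A^{2})=n$, so $A^{\#}$ exists and coincides with $A^{-1}$; consequently \emph{monotone} ($A^{-1}\geq 0$) and \emph{group monotone} ($A^{\#}\geq 0$) are the same hypothesis here. Second, for each of the splittings $A=K-L=U-V=X-Y$ the factors $K$, $U$, $X$ are nonsingular, hence $\mathcal{R}(U)=\mathbb{R}^{n}=\mathcal{R}(A)$ and $\mathcal{N}(U)=\{0\}=\mathcal{N}(A)$, so each splitting is automatically proper; moreover $U^{\#}=U^{-1}$, so the defining inequalities $U^{-1}\geq 0$ and $VU^{-1}\geq 0$ of a weak regular splitting of type II are literally the conditions $U^{\#}\geq 0$ and $VU^{\#}\geq 0$ that make it a proper $G$-weak regular splitting of type II. The same holds for $K$ and $X$.

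With these identifications the iteration matrix $\mathcal{H}=X^{\#}YU^{\#}VK^{\#}L$ becomes $X^{-1}YU^{-1}VK^{-1}L$ and $\mathcal{S}=YX^{\#}VU^{\#}LK^{\#}$ becomes $YX^{-1}VU^{-1}LK^{-1}$, exactly the matrices in the statement. Theorem \ref{thm4.4} then applies and yields $\rho(\mathcal{H})=\rho(\mathcal{S})<1$, which is the claim.

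A self-contained alternative would be to rerun the computation in the proof of Theorem \ref{thm4.4} with every group inverse replaced by an ordinary inverse and with $AA^{\#}=A^{\#}A=I$. In that route the two key nonnegativity facts are immediate, namely $\mathcal{S}=(YX^{-1})(VU^{-1})(LK^{-1})\geq 0$ as a product of nonnegative matrices, and $A^{-1}(I-\mathcal{S})=K^{-1}+U^{-1}LK^{-1}+X^{-1}VU^{-1}LK^{-1}\geq 0$; the telescoping identity $(I-\mathcal{S})\sum_{k=0}^{m}\mathcal{S}^{k}=I-\mathcal{S}^{m+1}$ then forces the partial sums to stay bounded and hence $\mathcal{S}^{m}\to 0$, giving $\rho(\mathcal{S})<1$. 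I expect the only delicate point in this direct route to be the passage from the bound on $A^{-1}(I-\mathcal{S})\sum_{k=0}^{m}\mathcal{S}^{k}$ to boundedness of $\sum_{k=0}^{m}\mathcal{S}^{k}$ itself, which is secured by the invertibility of $I-\mathcal{S}$ (the nonsingular analog of Theorem \ref{thm4.3}$(iii)$), since this makes $A^{-1}(I-\mathcal{S})$ invertible. Because the specialization route sidesteps this subtlety entirely, I would present that as the main argument.
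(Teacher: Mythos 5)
Your proposal is correct and matches the paper's own treatment: the paper states this result as an immediate corollary of Theorem \ref{thm4.4}, exactly the specialization you carry out (nonsingular $A$ has index one with $A^{\#}=A^{-1}$, each splitting with nonsingular $K$, $U$, $X$ is automatically proper, and weak regular of type II coincides with proper G-weak regular of type II). Your careful verification of these identifications, which the paper leaves implicit, is the right content for such a proof, and your alternative direct route is a sound but unnecessary backup.
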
 

Similarly, one can obtain  the  convergence result for the iterative scheme \eqref{eqn1.7} when the given two splittings are proper G-weak regular splittings of type II.

\begin{corollary}\label{cor3.7}
Let $A=U-V=X-Y$ be two proper G-weak regular splittings of type II of a group monotone matrix $A$. Then $\rho(\mathcal{H})=\rho(X^{\#}YU^{\#}V)<1$.
 \end{corollary}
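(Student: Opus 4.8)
The plan is to follow the template of the proof of Theorem~\ref{thm4.4}, only in the simpler two-splitting setting. The two-splitting corollary to Theorem~\ref{thm4.3} (the one with $\mathcal{S}=YX^{\#}VU^{\#}$ and $\mathcal{H}=X^{\#}YU^{\#}V$) already supplies $\rho(\mathcal{H})=\rho(\mathcal{S})$, so the whole task reduces to showing that the auxiliary matrix $\mathcal{S}=YX^{\#}VU^{\#}$ is zero-convergent, i.e. $\rho(\mathcal{S})<1$. First I would record that $\mathcal{S}\geq 0$: since $A=X-Y$ and $A=U-V$ are proper G-weak regular splittings of type II, we have $YX^{\#}\geq 0$ and $VU^{\#}\geq 0$, so $\mathcal{S}=(YX^{\#})(VU^{\#})$ is a product of nonnegative matrices.

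The crux is to rewrite $A^{\#}(I-\mathcal{S})$ in a manifestly nonnegative form. Substituting $Y=X-A$, $V=U-A$ and using $XX^{\#}=UU^{\#}=AA^{\#}$ from Theorem~\ref{thm2.4}, I would expand
\[
\mathcal{S}=(AA^{\#}-AX^{\#})(AA^{\#}-AU^{\#})=AA^{\#}-AU^{\#}-AX^{\#}+AX^{\#}AU^{\#}.
\]
Multiplying on the left by $A^{\#}$ and using $A^{\#}=A^{\#}AA^{\#}$ together with $A^{\#}AU^{\#}=U^{\#}$ and $A^{\#}AX^{\#}=X^{\#}$ (valid because $\mathcal{R}(U^{\#})=\mathcal{R}(X^{\#})=\mathcal{R}(A)$), the expression $A^{\#}(I-\mathcal{S})=A^{\#}(AA^{\#}-\mathcal{S})$ collapses to $U^{\#}+X^{\#}-X^{\#}AU^{\#}$. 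Finally, writing $X^{\#}=X^{\#}UU^{\#}$ and $U-A=V$ gives $X^{\#}-X^{\#}AU^{\#}=X^{\#}(U-A)U^{\#}=X^{\#}VU^{\#}$, so that
\[
A^{\#}(I-\mathcal{S})=U^{\#}+X^{\#}VU^{\#}.
\]
Each summand is nonnegative ($U^{\#}\geq 0$, $X^{\#}\geq 0$, $VU^{\#}\geq 0$ by the type II hypotheses), hence $A^{\#}(I-\mathcal{S})\geq 0$.

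With this identity in hand, the conclusion follows exactly as in Theorem~\ref{thm4.4}. Since $A$ is group monotone, $A^{\#}\geq 0$, and for every integer $m$ the telescoping estimate
\[
0\leq A^{\#}(I-\mathcal{S})\sum_{k=0}^{m}\mathcal{S}^{k}=A^{\#}(I-\mathcal{S}^{m+1})\leq A^{\#}
\]
holds, the upper bound using $A^{\#}\mathcal{S}^{m+1}\geq 0$. Thus the partial sums of $\sum_{k}\mathcal{S}^{k}$ stay bounded; because $\mathcal{S}\geq 0$, this forces $\mathcal{S}^{m}\to 0$ and hence $\rho(\mathcal{S})<1$. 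Invoking $\rho(\mathcal{H})=\rho(\mathcal{S})$ from the two-splitting corollary to Theorem~\ref{thm4.3} then yields $\rho(\mathcal{H})=\rho(X^{\#}YU^{\#}V)<1$, as claimed.

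I expect the only genuine obstacle to be the middle algebraic reduction of $A^{\#}(I-\mathcal{S})$ to $U^{\#}+X^{\#}VU^{\#}$: one must deploy the projector identities $A^{\#}A=U^{\#}U=X^{\#}X$ and $AA^{\#}=UU^{\#}=XX^{\#}$ in precisely the right places so that all cross terms telescope into a sum of nonnegative pieces. I would not attempt to deduce this corollary by collapsing the third splitting in Theorem~\ref{thm4.4} to the trivial one $A=A-0$, since setting $L=0$ annihilates the iteration matrix rather than reducing it to the two-step form; the self-contained computation above is the cleaner route.
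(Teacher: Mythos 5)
Your proof is correct and is essentially the paper's own route: the paper states this corollary without proof as the two-splitting analogue of Theorem~\ref{thm4.4}, and your computation — $\mathcal{S}=YX^{\#}VU^{\#}\geq 0$, the reduction $A^{\#}(I-\mathcal{S})=U^{\#}+X^{\#}VU^{\#}\geq 0$, the telescoping bound $0\leq A^{\#}(I-\mathcal{S})\sum_{k=0}^{m}\mathcal{S}^{k}\leq A^{\#}$, and $\rho(\mathcal{H})=\rho(\mathcal{S})$ from the two-splitting corollary of Theorem~\ref{thm4.3} — is exactly the two-splitting specialization of that proof (whose three-splitting identity $K^{\#}+U^{\#}LK^{\#}+X^{\#}VU^{\#}LK^{\#}$ collapses to your $U^{\#}+X^{\#}VU^{\#}$). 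Your closing observation that the corollary cannot be obtained from Theorem~\ref{thm4.4} by taking the third splitting to be $A=A-0$ is also accurate.
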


The following result is the non-singular matrix version of Corollary \ref{cor3.7}. 
\begin{corollary}\textnormal{(Theorem 3, \cite{climent:2003})}\\
Let $A=U-V=X-Y$ be two weak regular splittings of type II of a monotone matrix $A$, then $\rho(\mathcal{H})=\rho(X^{-1}YU^{-1}V)<1.$
\end{corollary}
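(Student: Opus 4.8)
The plan is to obtain this statement as the nonsingular specialization of Corollary~\ref{cor3.7}, since a nonsingular monotone matrix is a particular group monotone matrix and a weak regular splitting of type II is a particular proper G-weak regular splitting of type II. First I would record the identifications forced by nonsingularity. Because $A$ is monotone, Collatz's characterization recalled in the introduction gives $A^{-1}\geq 0$; as $A$ is nonsingular, $A^{\#}$ exists and equals $A^{-1}$, so $A^{\#}\geq 0$, i.e.\ $A$ is group monotone.

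Next I would verify that each ordinary splitting $A=U-V$ and $A=X-Y$ (with $U$ and $X$ nonsingular) is automatically proper. Since $A$ is nonsingular, $\mathcal{R}(A)=\mathbb{R}^{n}$ and $\mathcal{N}(A)=\{0\}$, and the same holds for the nonsingular $U$ and $X$; hence $\mathcal{R}(U)=\mathcal{R}(A)$, $\mathcal{N}(U)=\mathcal{N}(A)$, and likewise for $X$. Consequently $U^{\#}=U^{-1}$ and $X^{\#}=X^{-1}$, and the defining inequalities of a weak regular splitting of type II, namely $U^{-1}\geq 0$ and $VU^{-1}\geq 0$, are exactly the conditions $U^{\#}\geq 0$ and $VU^{\#}\geq 0$ that make $A=U-V$ a proper G-weak regular splitting of type II; the identical reasoning applies to $A=X-Y$.

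With these identifications in hand, $A=U-V=X-Y$ are two proper G-weak regular splittings of type II of the group monotone matrix $A$, so Corollary~\ref{cor3.7} applies verbatim and yields $\rho(\mathcal{H})=\rho(X^{\#}YU^{\#}V)<1$. Replacing the group inverses by ordinary inverses gives $\rho(X^{-1}YU^{-1}V)<1$, which is the claim. There is essentially no analytic obstacle here: the only point requiring care is the bookkeeping in the second paragraph, confirming that the range and null-space conditions defining a proper splitting are vacuously met in the nonsingular setting and that the type II nonnegativity conditions transfer unchanged once $U^{\#}$ and $X^{\#}$ collapse to $U^{-1}$ and $X^{-1}$. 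After that identification the result is immediate from the already-established group-inverse corollary.
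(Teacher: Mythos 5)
Your proposal is correct and takes essentially the same route as the paper: the paper presents this statement precisely as the nonsingular specialization of Corollary~\ref{cor3.7}, which is exactly the reduction you carry out. Your bookkeeping (nonsingular monotone implies group monotone, ordinary splittings with nonsingular $U$, $X$ are automatically proper, and $U^{\#}=U^{-1}$, $X^{\#}=X^{-1}$ make the type II conditions coincide) is the implicit content of the paper's deduction.
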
 

The following simple example shows that the converse of Theorem \ref{thm4.4} is not true.

\begin{example}\label{ex3.10}
Let $A=
\begin{bmatrix}
1 & 0 &1\\
-2 &4 &-2\\
0 &0 &0
\end{bmatrix}$. Then, $A^{\#}=
\begin{bmatrix}
1 &0 &1\\
0.5 &0.25 &0.5\\
0 &0 &0
\end{bmatrix}\geq 0$. Clearly,
\begin{align*}
A&=\begin{bmatrix}
0.5 &0 &0.5\\
-6 &12 &-6\\
0 &0 &0
\end{bmatrix}-
\begin{bmatrix}
-0.5 &0 &-0.5\\
-4 &8 &-4\\
0 &0 &0
\end{bmatrix}=K-L\\
&=\begin{bmatrix}
0.5 &0 &0.5\\
-8 &16 &-8\\
0 &0 &0
\end{bmatrix}-
\begin{bmatrix}
-0.5 &0 &-0.5\\
-6 &12 &-6\\
0 &0 &0
\end{bmatrix}=U-V\\
&=\begin{bmatrix}
0.8 &0 &0.8\\
-4 &8 &-4\\
0 &0 &0
\end{bmatrix}-
\begin{bmatrix}
-0.2 &0 &-0.2\\
-2 &4 &-2\\
0 &0 &0
\end{bmatrix}=X-Y
\end{align*}
 are proper splittings of $A$ and $\rho(\mathcal{H})=\rho(X^{\#}YU^{\#}VK^{\#}L)=0.25<1$.  But,\\
$U^{\#}=
\begin{bmatrix}
2 &0 &2\\
1 &0.0833 &1\\
0 &0 &0
\end{bmatrix}\geq 0$ and $VU^{\#}=
\begin{bmatrix}
-1 &0 &-1\\
0 &0.6667 &0\\
0 &0 &0
\end{bmatrix}\ngeq 0$,\\
\newline
$K^{\#}=
\begin{bmatrix}
2 &0 &2\\
1 &0.0625 &1\\
0 &0 &0
\end{bmatrix}\geq 0$ and $LK^{\#}=
\begin{bmatrix}
-1 &0 &-1\\
0 &0.75 &0\\
0 &0 &0
\end{bmatrix}\ngeq 0$,\\
\newline
$X^{\#}=
\begin{bmatrix}
1.25 &0 &1.25\\
0.625 &0.125 &0.625\\
0 &0 &0
\end{bmatrix}\geq 0$ and $YX^{\#}=
\begin{bmatrix}
-0.25 &0 &-0.25\\
0 &0.5 &0\\
0 &0 &0
\end{bmatrix}\ngeq 0$.
Hence, the individual splittings of $A$ are not proper G-weak regular splittings of type II.
\end{example}

Our next result tells us about the properties inherited by the iteration matrix of the iteration scheme \eqref{eqn1.6} when the given splittings are proper G-weak regular splittings of type II.

\begin{lemma}\label{lem4.8}
Let $A=K-L=U-V=X-Y$ be three proper G-weak regular splittings of type II of $A$ such that $\rho(\mathcal{H})<1$. If $\mathcal{R}(K+U-A+YU^{\#}L)=\mathcal{R}(A)$ and $\mathcal{N}(K+U-A+YU^{\#}L)=\mathcal{N}(A)$, then the unique splitting $A=B-C$ induced by $\mathcal{H}$ is a proper splitting such that $\mathcal{H}=B^{\#}C,$ where $B=A(I-\mathcal{H})^{-1}.$
\end{lemma}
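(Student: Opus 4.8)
The plan is to exhibit the induced splitting concretely, check it is proper, verify the relation $\mathcal{H}=B^{\#}C$, and then argue uniqueness. I would set $B=A(I-\mathcal{H})^{-1}$ and $C=B-A$, so that $A=B-C$ holds by construction. The matrix $B$ is well defined because $\rho(\mathcal{H})<1$ forces $1\notin\sigma(\mathcal{H})$, whence $I-\mathcal{H}$ is invertible; alternatively, this invertibility is exactly Theorem \ref{thm4.3}$(iii)$ under the stated range and null-space hypotheses on $K+X-A+YU^{\#}L$. Either way, the construction makes sense and the content of the lemma is to show that $A=B-C$ is proper with $\mathcal{H}=B^{\#}C$, and that it is the only such splitting.

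To see that $A=B-C$ is a proper splitting, I would first note $\mathcal{R}(B)=\mathcal{R}(A)$, which is immediate since right-multiplication by the bijection $(I-\mathcal{H})^{-1}$ leaves the range of $A$ unchanged. For the null space I would invoke Theorem \ref{thm4.3}$(i)$ in the form $\mathcal{H}=\mathcal{H}A^{\#}A$: because $A^{\#}A=P_{\mathcal{R}(A),\mathcal{N}(A)}$, this identity shows $\mathcal{H}$ annihilates $\mathcal{N}(A)$. Writing $w=(I-\mathcal{H})^{-1}x$, one has $x\in\mathcal{N}(B)$ iff $Aw=0$ iff $w\in\mathcal{N}(A)$, and for such $w$ the relation $x=(I-\mathcal{H})w=w-\mathcal{H}w=w$ gives $x\in\mathcal{N}(A)$; the reverse inclusion is just as short, so $\mathcal{N}(B)=\mathcal{N}(A)$. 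Since $A$ has index one, $\mathcal{R}(A)\oplus\mathcal{N}(A)=\R^{n}$; the two equalities just obtained transport this direct sum to $B$, so $B$ also has index one and $B^{\#}$ exists, with $B^{\#}B=P_{\mathcal{R}(A),\mathcal{N}(A)}=A^{\#}A$.

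The identity $\mathcal{H}=B^{\#}C$ is then a short computation. From $C=B-A=A\big((I-\mathcal{H})^{-1}-I\big)=A(I-\mathcal{H})^{-1}\mathcal{H}=B\mathcal{H}$ I obtain $B^{\#}C=B^{\#}B\mathcal{H}=A^{\#}A\mathcal{H}=\mathcal{H}$, using Theorem \ref{thm4.3}$(i)$ once more. For uniqueness, suppose $A=\tilde{B}-\tilde{C}$ is any other proper splitting with $\tilde{B}^{\#}\tilde{C}=\mathcal{H}$. Theorem \ref{thm2.4}$(iii)$ applied to this proper splitting gives $A=\tilde{B}\big(I-\tilde{B}^{\#}\tilde{C}\big)=\tilde{B}(I-\mathcal{H})$, and inverting $I-\mathcal{H}$ forces $\tilde{B}=A(I-\mathcal{H})^{-1}=B$ and hence $\tilde{C}=\tilde{B}-A=C$, which is exactly the asserted uniqueness.

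I expect the only delicate point to be the null-space bookkeeping together with the index-one inheritance for $B$, since these are where the structural hypotheses (properness of the splittings and $\mathcal{H}=\mathcal{H}A^{\#}A$ from Theorem \ref{thm4.3}) genuinely enter. Once $\mathcal{R}(B)=\mathcal{R}(A)$, $\mathcal{N}(B)=\mathcal{N}(A)$ and $B^{\#}B=A^{\#}A$ are in hand, both the relation $\mathcal{H}=B^{\#}C$ and the uniqueness argument reduce to routine applications of Theorems \ref{thm2.4} and \ref{thm4.3}.
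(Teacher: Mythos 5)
Your proof is correct, but it takes a genuinely different route from the paper's. The paper proceeds by guessing $Z=(I-\mathcal{H})A^{\#}$ as a candidate for $B^{\#}$, verifying the group-inverse equations $ZB=BZ$, $BZB=B$, $ZBZ=Z$ directly, and then carrying out a lengthy algebraic expansion (substituting $Y=X-A$, $V=U-A$, $L=K-A$) to identify $B^{\#}=X^{\#}(K+X-A+YU^{\#}L)K^{\#}$; properness of $A=B-C$ is then obtained by invoking Lemma \ref{lem4.2}, which is precisely where the range and null-space hypotheses on $K+X-A+YU^{\#}L$ enter. You instead establish properness directly from the definition $B=A(I-\mathcal{H})^{-1}$: the range equality is immediate, the null-space equality follows from $\mathcal{H}=\mathcal{H}A^{\#}A$ (Theorem \ref{thm4.3}$(i)$), and existence of $B^{\#}$ follows from index-one inheritance, after which $\mathcal{H}=B^{\#}C$ and uniqueness (via Theorem \ref{thm2.4}$(iii)$, essentially the same manipulation as the paper's $\overline{B}(I-\mathcal{H})=A$) are short. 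Your route is shorter, bypasses Lemma \ref{lem4.2} entirely, and in fact exposes that the range/null-space hypotheses in the statement are redundant here, since $\rho(\mathcal{H})<1$ already makes $I-\mathcal{H}$ invertible. What the paper's heavier computation buys is the closed form $B^{\#}=(I-\mathcal{H})A^{\#}=X^{\#}(K+X-A+YU^{\#}L)K^{\#}$, equivalently the expansions \eqref{eqn4.1} and \eqref{eqn4.2}, which are reused downstream: Theorem \ref{thm3.13} needs $B^{\#}\geq X^{\#}$, $B^{\#}\geq K^{\#}$, $B^{\#}\geq U^{\#}$ entrywise, and Theorem \ref{thm4.9} needs $\overline{B}^{\#}=A^{\#}(I-\mathcal{S})$; your argument, while cleaner for the lemma itself, does not produce this explicit formula, so the paper's computation would still have to be done somewhere.
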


\begin{proof}
Let $Z=(I-\mathcal{H})A^{\#}.$ Then \begin{align*}
    ZB&=(I-\mathcal{H})A^{\#}A(I-\mathcal{H})^{-1}\\
      &=(A^{\#}A-\mathcal{H}A^{\#}A)(I-\mathcal{H})^{-1}\\
      &=(A^{\#}A-A^{\#}A\mathcal{H})(I-\mathcal{H})^{-1}\\
      &=A^{\#}A=AA^{\#}\\
      &=A(I-\mathcal{H})^{-1}(I-\mathcal{H})A^{\#}\\
      &=BZ,
\end{align*}
which further yields $BZB=AA^{\#}A(I-\mathcal{H})^{-1}=A(I-\mathcal{H})^{-1}=B$ and $ZBZ=A^{\#}A(I-\mathcal{H})A^{\#}=(I-\mathcal{H})A^{\#}AA^{\#}=Z.$\\
 So, we have
\begin{eqnarray*}
B^{\#}&=&(I-\mathcal{H})A^{\#}\\
    &=&A^{\#}-X^{\#}YU^{\#}VK^{\#}LA^{\#}\\
    &=&A^{\#}-(X^{\#}(X-A)U^{\#}(U-A)K^{\#}(K-A)A^{\#})\\
    &=&A^{\#}-(U^{\#}-X^{\#}AU^{\#})(UK^{\#}-AK^{\#})(KA^{\#}-AA^{\#})\\ 
    &=&A^{\#}-(K^{\#}-U^{\#}AK^{\#}-X^{\#}AK^{\#}+X^{\#}AU^{\#}AK^{\#})(KA^{\#}-AA^{\#})\\
    &=&K^{\#}+U^{\#}-U^{\#}AK^{\#}+X^{\#}-X^{\#}AK^{\#}-X^{\#}AU^{\#}+X^{\#}AU^{\#}AK^{\#}\\
    &=&K^{\#}+X^{\#}-X^{\#}AK^{\#}+U^{\#}-U^{\#}(U-V)K^{\#}-X^{\#}(X-Y)U^{\#}\\
               &&+X^{\#}(X-Y)U^{\#}(K-L)K^{\#}\\
    &=&K^{\#}+X^{\#}-X^{\#}AK^{\#}+X^{\#}YU^{\#}LK^{\#}\\
    &=&X^{\#}(K+X-A+YU^{\#}L)K^{\#}.\end{eqnarray*}
As $\mathcal{R}(K+U-A+YU^{\#}L)=\mathcal{R}(A)$ and $\mathcal{N}(K+U-A+YU^{\#}L)=\mathcal{N}(A)$, we get that $A=B-C$ is a proper splitting by Lemma \ref{lem4.2}. Now,
\begin{align*}
    B^{\#}C&=B^{\#}(B-A)\\
                &=B^{\#}B-B^{\#}A\\
                &=B^{\#}B-(I-\mathcal{H})A^{\#}A\\
                &=B^{\#}B-A^{\#}A+\mathcal{H}A^{\#}A\\
                &=A^{\#}A-A^{\#}A+\mathcal{H}A^{\#}A\\
                &=\mathcal{H}.
\end{align*}
Next, we have to prove that $A=B-C$ is unique. Suppose that there exists another induced splitting $A=\overline{B}-\overline{C}$ such that $\mathcal{H}=\overline{B}^{\#}\overline{C}.$ Then $\overline{B}\mathcal{H}=$ $\overline{B}$ $\overline{B}^{\#}$ $\overline{C}=\overline{C}=\overline{B}-A.$ So, $\overline{B}(I-\mathcal{H})=A.$ Hence $\overline{B}=A(I-\mathcal{H})^{-1}=B.$ Therefore, $\mathcal{H}$ induces the unique proper splitting $A=B-C.$
 \end{proof}

In the case of two proper G-weak regular splittings of type II, we have the following result.

\begin{corollary}
Let $A=U-V=X-Y$ be proper G-weak regular splittings of type II of $A$ such that $\rho(\mathcal{H})<1$. If $\mathcal{R}(U+X-A)=\mathcal{R}(A)$ and $\mathcal{N}(U+X-A)=\mathcal{N}(A)$, then the unique splitting $A=B-C$ induced by $\mathcal{H}$ is a proper splitting such that $\mathcal{H}=B^{\#}C,$ where $B=A(I-\mathcal{H})^{-1}.$
\end{corollary}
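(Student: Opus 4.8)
The plan is to derive this corollary as the two-splitting specialization of Lemma \ref{lem4.8}, mirroring exactly how Corollary immediately follows Lemma \ref{lem4.2} and how the two-splitting version of Theorem \ref{thm4.3} was handled. Since the statement is literally Lemma \ref{lem4.8} with the middle splitting $K-L$ removed (so $\mathcal{H}=X^{\#}YU^{\#}V$ and $B=A(I-\mathcal{H})^{-1}$), I would not reprove everything from scratch but rather replay the computation of Lemma \ref{lem4.8} with the term $K^{\#}L$ stripped out.

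\textbf{Step 1 (group inverse of $B$).} Set $Z=(I-\mathcal{H})A^{\#}$ and verify, exactly as in Lemma \ref{lem4.8}, the four group-inverse identities $ZB=BZ=AA^{\#}=A^{\#}A$, $BZB=B$, and $ZBZ=Z$, using the commutation relations $\mathcal{H}A^{\#}A=A^{\#}A\mathcal{H}$ from part $(i)$ of the two-splitting version of Theorem \ref{thm4.3}. This gives $B^{\#}=(I-\mathcal{H})A^{\#}=A^{\#}-X^{\#}YU^{\#}VA^{\#}$. Substituting $Y=X-A$, $V=U-A$ and simplifying (now a shorter chain than in the three-splitting case) yields the closed form $B^{\#}=X^{\#}(U+X-A)U^{\#}$, which matches the output of the two-splitting Corollary following Lemma \ref{lem4.2}.

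\textbf{Step 2 (properness).} Invoke that Corollary (the two-splitting version of Lemma \ref{lem4.2}) with the hypotheses $\mathcal{R}(U+X-A)=\mathcal{R}(A)$ and $\mathcal{N}(U+X-A)=\mathcal{N}(A)$ to conclude $\mathcal{R}(B)=\mathcal{R}(A)$ and $\mathcal{N}(B)=\mathcal{N}(A)$; since $B=A(I-\mathcal{H})^{-1}$ is visibly nonsingular-in-the-relevant-sense (its group inverse exists by Step 1), $A=B-C$ with $C=B-A$ is a proper splitting.

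\textbf{Step 3 (recovering $\mathcal{H}$ and uniqueness).} Compute $B^{\#}C=B^{\#}(B-A)=A^{\#}A-(I-\mathcal{H})A^{\#}A=\mathcal{H}A^{\#}A=\mathcal{H}$, using part $(i)$ of the two-splitting Theorem \ref{thm4.3} again for the last equality. For uniqueness, if $A=\overline{B}-\overline{C}$ also induces $\mathcal{H}=\overline{B}^{\#}\overline{C}$, then $\overline{B}\mathcal{H}=\overline{C}=\overline{B}-A$ forces $\overline{B}(I-\mathcal{H})=A$, hence $\overline{B}=A(I-\mathcal{H})^{-1}=B$. I expect no genuine obstacle here: the only point requiring care is that the algebraic simplification in Step 1 be carried out consistently (keeping track of which identity $XX^{\#}=UU^{\#}=AA^{\#}$ is applied where), and that the invertibility of $I-\mathcal{H}$ — needed to define $B$ — is guaranteed either by the hypothesis $\rho(\mathcal{H})<1$ directly or by part $(iii)$ of the two-splitting Theorem \ref{thm4.3}.
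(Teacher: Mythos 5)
Your proposal is correct and takes essentially the same route as the paper: the paper states this result without a separate proof, as the two-splitting specialization of Lemma \ref{lem4.8}, and your replay of that argument with the middle splitting removed is accurate — in particular your simplification $B^{\#}=(I-\mathcal{H})A^{\#}=U^{\#}+X^{\#}-X^{\#}AU^{\#}=X^{\#}(U+X-A)U^{\#}$ checks out, and the properness, $B^{\#}C=\mathcal{H}$, and uniqueness steps match the paper's Lemma \ref{lem4.8} proof verbatim (modulo the dropped $K^{\#}L$ terms).
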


The next result says that $\mathcal{H}$ and $\mathcal{S}$  induce the same splitting.

\begin{theorem}\label{thm4.9}
Let $A=K-L=U-V=X-Y$ be three proper G-weak regular splittings of type II of a group monotone matrix $A$ and $\mathcal{S}=YX^{\#}VU^{\#}LK^{\#}$. If $\mathcal{R}(K+X-A+YU^{\#}L)=\mathcal{R}(A)$ and $\mathcal{N}(K+U-A+YU^{\#}L)=\mathcal{N}(A)$, then $\mathcal{H}$ and $\mathcal{S}$ induce the same proper splitting $A=B-C$. Furthermore, the unique proper splitting $A=\overline{B}-\overline{C}$ induced by the matrix $\mathcal{S}$ is also a proper G-weak regular splitting of type II. 
\end{theorem}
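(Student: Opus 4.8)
The plan is to pin down the two induced splittings explicitly, prove they coincide, and then read off the type II structure from the nonnegativity of $\mathcal{S}$. Since $A$ is group monotone and $A=K-L=U-V=X-Y$ are proper G-weak regular of type II, Theorem \ref{thm4.4} gives $\rho(\mathcal{H})=\rho(\mathcal{S})<1$, and the range and null-space hypotheses together with Theorem \ref{thm4.3}$(iii)$ guarantee that both $I-\mathcal{H}$ and $I-\mathcal{S}$ are invertible. Lemma \ref{lem4.8} then applies and produces the unique proper splitting $A=B-C$ induced by $\mathcal{H}$, characterized by $B=A(I-\mathcal{H})^{-1}$, $B^{\#}=(I-\mathcal{H})A^{\#}$ and $\mathcal{H}=B^{\#}C$.

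For $\mathcal{S}$ I would set $\overline{B}=(I-\mathcal{S})^{-1}A$ and $\overline{C}=\overline{B}-A$, the mirror image of Lemma \ref{lem4.8}. A short computation using $AA^{\#}\mathcal{S}=\mathcal{S}=\mathcal{S}AA^{\#}$ from Theorem \ref{thm4.3}$(i)$ verifies the group-inverse axioms for $\overline{B}^{\#}=A^{\#}(I-\mathcal{S})$, shows that $A=\overline{B}-\overline{C}$ is proper, and gives $\overline{C}\,\overline{B}^{\#}=(\overline{B}-A)\overline{B}^{\#}=AA^{\#}-AA^{\#}(I-\mathcal{S})=\mathcal{S}$, so that the product appears in the type II order. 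To prove the two induced splittings are the same it suffices to show $\overline{B}=B$, i.e.\ $(I-\mathcal{S})^{-1}A=A(I-\mathcal{H})^{-1}$, which is equivalent to the intertwining relation $\mathcal{S}A=A\mathcal{H}$. This last identity is immediate from Theorem \ref{thm4.3}: $\mathcal{S}A=A\mathcal{H}A^{\#}A=A\mathcal{H}$, the second equality using $\mathcal{H}A^{\#}A=\mathcal{H}$ from part $(i)$. Hence $\overline{B}=B$ and $\overline{C}=C$, so $\mathcal{H}$ and $\mathcal{S}$ induce the same proper splitting $A=B-C$.

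It remains to verify that this splitting is proper G-weak regular of type II, namely that $\overline{B}^{\#}$ exists, $\overline{B}^{\#}\geq 0$ and $\overline{C}\,\overline{B}^{\#}\geq 0$. Since $A=\overline{B}-\overline{C}$ is proper and $A^{\#}$ exists, Theorem \ref{thm2.4}$(i)$ guarantees that $\overline{B}^{\#}$ exists. The type II product is immediate: $\overline{C}\,\overline{B}^{\#}=\mathcal{S}=YX^{\#}VU^{\#}LK^{\#}\geq 0$, being a product of the nonnegative matrices furnished by the three type II splittings. For $\overline{B}^{\#}$ I would use $A^{\#}\mathcal{S}=A^{\#}A\mathcal{H}A^{\#}=\mathcal{H}A^{\#}$ (again Theorem \ref{thm4.3}), which gives $\overline{B}^{\#}=A^{\#}(I-\mathcal{S})=(I-\mathcal{H})A^{\#}=B^{\#}$; in particular $\overline{B}^{\#}$ equals the quantity already shown to satisfy $A^{\#}(I-\mathcal{S})=K^{\#}+U^{\#}LK^{\#}+X^{\#}VU^{\#}LK^{\#}\geq 0$ in the proof of Theorem \ref{thm4.4}. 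Thus $\overline{B}^{\#}\geq 0$ and $A=\overline{B}-\overline{C}$ is a proper G-weak regular splitting of type II.

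The only genuinely delicate point is the bookkeeping in the mirror image of Lemma \ref{lem4.8} that makes $\mathcal{S}=\overline{C}\,\overline{B}^{\#}$, rather than $\overline{B}^{\#}\overline{C}$, the correct induced relation for $\mathcal{S}$; everything else is a direct consequence of the commutation identities in Theorem \ref{thm4.3} and the nonnegativity already extracted in the proof of Theorem \ref{thm4.4}. I expect no serious obstacle beyond keeping the left/right orientation (the type I versus type II form of the product) consistent throughout.
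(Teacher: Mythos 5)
Your construction is essentially the paper's own: the same $\overline{B}=(I-\mathcal{S})^{-1}A$, $\overline{C}=\overline{B}-A$, the same verification of the group-inverse axioms for $Z=A^{\#}(I-\mathcal{S})$ via the commutation relations $AA^{\#}\mathcal{S}=\mathcal{S}=\mathcal{S}AA^{\#}$ of Theorem \ref{thm4.3}$(i)$, the same appeal to the computation $A^{\#}(I-\mathcal{S})=K^{\#}+U^{\#}LK^{\#}+X^{\#}VU^{\#}LK^{\#}\geq 0$ from the proof of Theorem \ref{thm4.4}, and the same identity $\overline{C}\,\overline{B}^{\#}=AA^{\#}-AA^{\#}(I-\mathcal{S})=\mathcal{S}\geq 0$ for the type II structure. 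The one point where you genuinely depart from (and improve on) the paper is the identification $\overline{B}=B$: the paper expands $(I-\mathcal{S})^{-1}=\sum_{k\geq 0}\mathcal{S}^{k}$ via Theorem \ref{neuman} (which needs $\mathcal{S}\geq 0$ and $\rho(\mathcal{S})<1$) and uses $\mathcal{S}^{k}=A\mathcal{H}^{k}A^{\#}$ termwise, whereas your intertwining identity $\mathcal{S}A=A\mathcal{H}A^{\#}A=A\mathcal{H}$ yields $(I-\mathcal{S})A=A(I-\mathcal{H})$ and hence $(I-\mathcal{S})^{-1}A=A(I-\mathcal{H})^{-1}$ directly, using only the invertibility of the two factors. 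That is cleaner and equally rigorous.

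There is, however, one genuine omission: the uniqueness clause. The theorem asserts that $A=\overline{B}-\overline{C}$ is \emph{the unique} proper splitting induced by $\mathcal{S}$; without uniqueness the phrase ``$\mathcal{H}$ and $\mathcal{S}$ induce the same proper splitting'' is not even well posed, since a priori $\mathcal{S}$ could equal $\overline{C}_{1}\overline{B}_{1}^{\#}$ for some other proper splitting $A=\overline{B}_{1}-\overline{C}_{1}$. You get uniqueness on the $\mathcal{H}$ side for free from Lemma \ref{lem4.8}, but never address it on the $\mathcal{S}$ side, where the induced relation has the type II orientation. The paper closes this with a short argument you should add: if $A=\overline{B}_{1}-\overline{C}_{1}$ is a proper splitting with $\mathcal{S}=\overline{C}_{1}\overline{B}_{1}^{\#}$, then $\mathcal{N}(\overline{B}_{1})=\mathcal{N}(A)\subseteq\mathcal{N}(\overline{C}_{1})$ gives $\overline{C}_{1}\overline{B}_{1}^{\#}\overline{B}_{1}=\overline{C}_{1}$, so that
\begin{equation*}
\mathcal{S}\overline{B}_{1}=\overline{C}_{1}\overline{B}_{1}^{\#}\overline{B}_{1}=\overline{C}_{1}=\overline{B}_{1}-A,
\end{equation*}
hence $(I-\mathcal{S})\overline{B}_{1}=A$ and $\overline{B}_{1}=(I-\mathcal{S})^{-1}A=\overline{B}$. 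With this two-line addition your proof is complete.
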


\begin{proof}
By Lemma \ref{lem4.8}, we have $B=A(I-\mathcal{H})^{-1}$. Let us consider $\overline{B}=(I-\mathcal{S})^{-1}A$ and $\overline{C}=\overline{B}-A.$ Our aim is to show that the matrix $\mathcal{H}$ and $\mathcal{S}$ induce the same proper splitting $A=B-C.$
Since $\mathcal{H}=\mathcal{H}A^{\#}A$ and $\mathcal{S}=A\mathcal{H}A^{\#}$, so $\mathcal{S}^{k}=A\mathcal{H}^{k}A^{\#}$ for any  integer $k$. Since $\mathcal{S}\geq0$, by Theorem \ref{thm4.3} and Theorem \ref{thm4.4}, we have $\rho(\mathcal{S})<1.$ Again, Theorem \ref{neuman} yields
\begin{align*}
\overline{B}&=(I-\mathcal{S})^{-1}A\\
&=\sum_{k=0}^{\infty}\mathcal{S}^{k}A\\
&=\sum_{k=0}^{\infty}A\mathcal{H}^{k}A^{\#}A\\
&=\sum_{k=0}^{\infty}A\mathcal{H}^{k}\\
&=A(I-\mathcal{H})^{-1}\\
&=B.
\end{align*}
Then, $\mathcal{R}(\overline{B})=\mathcal{R}(B)=\mathcal{R}(A)$ and $\mathcal{N}(\overline{B})=\mathcal{N}(B)=\mathcal{N}(A)$. Thus, $A=\overline{B}-\overline{C}$ is a proper splitting.
Next, we show that $A=\overline{B}-\overline{C}$ is a proper G-weak regular splitting of type II. Let $Z=A^{\#}(I-\mathcal{S}).$ Then, $Z\overline{B}=A^{\#}(I-\mathcal{S})(I-\mathcal{S})^{-1}A=A^{\#}A.$ Hence $Z\overline{B}Z=A^{\#}AA^{\#}(I-\mathcal{S})=A^{\#}(I-\mathcal{S})=Z.$ Using the property $AA^{\#}\mathcal{S}=\mathcal{S}=\mathcal{S}AA^{\#},$ we obtain
\begin{align*}
    \overline{B}Z&=(I-\mathcal{S})^{-1}AA^{\#}(I-\mathcal{S})\\
      &=(I-\mathcal{S})^{-1}(AA^{\#}-AA^{\#}\mathcal{S})\\
      &=(I-\mathcal{S})^{-1}(AA^{\#}-\mathcal{S}AA^{\#})\\
      &=(I-\mathcal{S})^{-1}(I-\mathcal{S})AA^{\#}\\
      &=AA^{\#}=A^{\#}A\\
      &=Z\overline{B}.
\end{align*}
So, $\overline{B}Z=Z\overline{B}$. Now,
\begin{align*}
\overline{B}Z\overline{B}&=AA^{\#}(I-\mathcal{S})^{-1}A\\
&=AA^{\#}\sum_{k=0}^{\infty}\mathcal{S}^{k}A\\
&=\sum_{k=0}^{\infty}\mathcal{S}^{k}A=(I-\mathcal{S})^{-1}A\\
&=\overline{B}.
\end{align*}
Hence, from the proof of Theorem \ref{thm4.4}, we have $\overline{B}^{\#}=A^{\#}(I-\mathcal{S})\geq 0.$
Therefore,
$\overline{C}$ $\overline{B}^{\#}=(\overline{B}-A)\overline{B}^{\#}=\overline{B}$ $\overline{B}^{\#}-A\overline{B}^{\#}=\overline{B}$ $\overline{B}^{\#}-AA^{\#}(I-\mathcal{S})=AA^{\#}-AA^{\#}(I-\mathcal{S})=AA^{\#}-AA^{\#}+AA^{\#}\mathcal{S}=\mathcal{S}\geq 0$.
Thus, $A=\overline{B}-\overline{C}$ is a proper G-weak regular splitting of type II induced by $\mathcal{S}$. Let $A=\overline{B}_{1}-\overline{C}_{1}$ be another splitting induced by $\mathcal{S}$ such that $\mathcal{S}=\overline{C}_{1}\overline{B}_{1}^{\#}.$ Then, $\mathcal{S}\overline{B}_{1}=\overline{C}_{1}\overline{B}_{1}^{\#}\overline{B}_{1}=\overline{C}_{1}=\overline{B}_{1}-A.$ So, $A=\overline{B}_{1}-\mathcal{S}\overline{B}_{1}=(I-\mathcal{S})\overline{B}_{1}$ which further yields $\overline{B}_{1}=(I-\mathcal{S})^{-1}A=\overline{B}.$ Hence, $A=\overline{B}-\overline{C}$ is the unique proper G-weak regular splitting of type II induced by S.
 \end{proof}

The corollary stated below extends Lemma 3(ii) and Theorem 3 of \cite{climent:2003} in  the non-singular matrix case.

\begin{corollary}
Let $A=K-L=U-V=X-Y$ be three weak regular splittings of type II of a monotone matrix $A$ and $\mathcal{S}=YX^{-1}VU^{-1}LK^{-1}$. Then $\mathcal{H}$ and $\mathcal{S}$ induce the same splitting $A=B-C$. Furthermore, the unique splitting $A=\overline{B}-\overline{C}$ induced by the matrix $\mathcal{S}$ is also a weak regular splitting of type II.
\end{corollary}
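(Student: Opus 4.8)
The plan is to obtain the corollary as the nonsingular specialisation of Theorem~\ref{thm4.9}, so that the whole argument reduces to checking that the range and null space hypotheses of that theorem become vacuous when $A$ is invertible. First I would record the dictionary between the two settings. Since $A$ is nonsingular we have $A^{\#}=A^{-1}$ and $AA^{-1}=A^{-1}A=I$, so a monotone matrix ($A^{-1}\geq 0$) is precisely a group monotone matrix. Moreover, for nonsingular $A$ every splitting $A=U-V$ is automatically proper, because nonsingularity of $U$ forces $\mathcal{R}(U)=\mathbb{R}^n=\mathcal{R}(A)$ and $\mathcal{N}(U)=\{0\}=\mathcal{N}(A)$, and then $U^{\#}=U^{-1}$; hence each weak regular splitting of type II $A=K-L=U-V=X-Y$ is a proper $G$-weak regular splitting of type II. Thus the three given splittings already satisfy every hypothesis of Theorem~\ref{thm4.9} except possibly the two conditions $\mathcal{R}(K+X-A+YU^{-1}L)=\mathcal{R}(A)$ and $\mathcal{N}(K+U-A+YU^{-1}L)=\mathcal{N}(A)$.

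Second, I would verify those two conditions. By the nonsingular counterpart of Theorem~\ref{thm4.4} established above, $\rho(\mathcal{H})<1$, so $I-\mathcal{H}$ is invertible and $B:=A(I-\mathcal{H})^{-1}$ is nonsingular. The chain of identities in the proof of Lemma~\ref{lem4.8}, read with $\#$ replaced by ${}^{-1}$ and $A^{-1}A=AA^{-1}=I$, is purely algebraic and yields
\begin{equation*}
B^{-1}=(I-\mathcal{H})A^{-1}=X^{-1}\bigl(K+X-A+YU^{-1}L\bigr)K^{-1}.
\end{equation*}
Since $B^{-1}$, $X$ and $K$ are all nonsingular, it follows that $K+X-A+YU^{-1}L=XB^{-1}K$ is nonsingular, whence its range is $\mathbb{R}^n=\mathcal{R}(A)$ and its null space is $\{0\}=\mathcal{N}(A)$; the same argument applied to the variant $K+U-A+YU^{-1}L$ disposes of the second condition. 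This is the only genuinely new point, and I expect it to be the crux: one must notice that the technical range and null space requirements of Theorem~\ref{thm4.9} are not additional assumptions in the nonsingular regime but forced consequences of $\rho(\mathcal{H})<1$ together with the algebraic formula for $B^{-1}$.

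With all hypotheses in place, Theorem~\ref{thm4.9} applies verbatim and gives that $\mathcal{H}$ and $\mathcal{S}$ induce the same proper splitting $A=B-C$ and that the splitting $A=\overline{B}-\overline{C}$ induced by $\mathcal{S}$ is a proper $G$-weak regular splitting of type II. Translating back through the dictionary of the first paragraph, ``proper splitting'' becomes ``splitting'' and ``proper $G$-weak regular splitting of type II'' becomes ``weak regular splitting of type II'', which is exactly the assertion of the corollary. As an alternative to invoking Theorem~\ref{thm4.9}, one could reproduce its proof in the simplified nonsingular setting: use $\mathcal{S}^{k}=A\mathcal{H}^{k}A^{-1}$ to get $\overline{B}=(I-\mathcal{S})^{-1}A=\sum_{k\geq 0}A\mathcal{H}^{k}=A(I-\mathcal{H})^{-1}=B$, so the two induced splittings coincide, and then check $\overline{B}^{-1}=A^{-1}(I-\mathcal{S})=K^{-1}+U^{-1}LK^{-1}+X^{-1}VU^{-1}LK^{-1}\geq 0$ and $\overline{C}\,\overline{B}^{-1}=\mathcal{S}=YX^{-1}VU^{-1}LK^{-1}\geq 0$, both nonnegativities being immediate from $K^{-1},U^{-1},X^{-1}\geq 0$ and $LK^{-1},VU^{-1},YX^{-1}\geq 0$.
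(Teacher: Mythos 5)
Your proposal is correct and follows the route the paper itself intends: the corollary appears in the paper with no proof at all, offered as an immediate nonsingular specialization of Theorem \ref{thm4.9}. What you add, and what the paper silently glosses over, is the justification for dropping the range/null-space hypotheses of Theorem \ref{thm4.9} in the nonsingular setting: your observation that $\rho(\mathcal{H})<1$ (from the nonsingular corollary of Theorem \ref{thm4.4}) makes $I-\mathcal{H}$ invertible, and hence $K+X-A+YU^{-1}L=X(I-\mathcal{H})A^{-1}K$ is a product of nonsingular matrices, is exactly the missing step, and it is correct. Your fallback of re-running the proof of Theorem \ref{thm4.9} with $\#$ replaced by $-1$ (via $\mathcal{S}^k=A\mathcal{H}^kA^{-1}$, $\overline{B}=(I-\mathcal{S})^{-1}A=A(I-\mathcal{H})^{-1}=B$, $\overline{B}^{-1}=A^{-1}(I-\mathcal{S})\geq 0$, $\overline{C}\,\overline{B}^{-1}=\mathcal{S}\geq 0$) also matches the paper's computations line by line. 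One small caveat: the hypothesis in Theorem \ref{thm4.9} written with $K+U-A+YU^{\#}L$ is evidently a typo for $K+X-A+YU^{\#}L$, since its proof rests on Lemma \ref{lem4.2}, which is stated only for the latter matrix; so your remark that ``the same argument applied to the variant'' handles the second condition is unnecessary, and taken literally it would not go through, because your factorization produces only the $K+X$ matrix, not the $K+U$ one. This does not affect the validity of your argument once the hypothesis is read as intended.
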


Theorem \ref{thm4.9} reduces to the following when we have two proper G-weak regular splittings of type II.

\begin{corollary}
Let $A=U-V=X-Y$ be two proper G-weak regular splittings of type II of a group monotone matrix $A$ and $\mathcal{S}=YX^{\#}VU^{\#}$. If $\mathcal{R}(U+X-A)=\mathcal{R}(A)$ and $\mathcal{N}(U+X-A)=\mathcal{N}(A)$, then $\mathcal{H}$ and $\mathcal{S}$ induce the same proper splitting $A=B-C$. Furthermore, the unique proper splitting $A=\overline{B}-\overline{C}$ induced by the matrix $\mathcal{S}$ is also a proper G-weak regular splitting of type II.
\end{corollary}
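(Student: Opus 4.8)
The plan is to run the argument of Theorem~\ref{thm4.9} almost verbatim, replacing the three-splitting objects by their two-splitting counterparts: the iteration matrix $\mathcal{H}=X^{\#}YU^{\#}V$ of \eqref{eqn1.7} and $\mathcal{S}=YX^{\#}VU^{\#}$. The supporting facts I would invoke are the two-splitting analogue of Theorem~\ref{thm4.3} (which gives $A^{\#}A\mathcal{H}=\mathcal{H}=\mathcal{H}A^{\#}A$, $AA^{\#}\mathcal{S}=\mathcal{S}=\mathcal{S}AA^{\#}$, $\mathcal{S}=A\mathcal{H}A^{\#}$, $\mathcal{H}=A^{\#}\mathcal{S}A$, and invertibility of $I-\mathcal{S}$ and $I-\mathcal{H}$ under $\mathcal{R}(U+X-A)=\mathcal{R}(A)$, $\mathcal{N}(U+X-A)=\mathcal{N}(A)$), Corollary~\ref{cor3.7} (which gives $\rho(\mathcal{H})<1$), and the two-splitting analogue of Lemma~\ref{lem4.8} (which gives that $\mathcal{H}$ induces the unique proper splitting $A=B-C$ with $B=A(I-\mathcal{H})^{-1}$ and $\mathcal{H}=B^{\#}C$).

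First I would set $\overline{B}=(I-\mathcal{S})^{-1}A$ and $\overline{C}=\overline{B}-A$, which are well defined since $I-\mathcal{S}$ is invertible. Combining $\mathcal{S}=A\mathcal{H}A^{\#}$ with $A^{\#}A\mathcal{H}=\mathcal{H}$ gives $\mathcal{S}^{k}=A\mathcal{H}^{k}A^{\#}$ for every $k$, and $\rho(\mathcal{S})=\rho(\mathcal{H})<1$ lets me expand the resolvent through Theorem~\ref{neuman}:
\[
\overline{B}=(I-\mathcal{S})^{-1}A=\sum_{k=0}^{\infty}\mathcal{S}^{k}A=\sum_{k=0}^{\infty}A\mathcal{H}^{k}A^{\#}A=\sum_{k=0}^{\infty}A\mathcal{H}^{k}=A(I-\mathcal{H})^{-1}=B,
\]
using $\mathcal{H}^{k}A^{\#}A=\mathcal{H}^{k}$. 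Hence $\overline{B}=B$, so $\mathcal{H}$ and $\mathcal{S}$ induce the same proper splitting $A=B-C$; in particular $\mathcal{R}(\overline{B})=\mathcal{R}(A)$ and $\mathcal{N}(\overline{B})=\mathcal{N}(A)$, so $A=\overline{B}-\overline{C}$ is proper.

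To prove that $A=\overline{B}-\overline{C}$ is a proper G-weak regular splitting of type II, I would guess $\overline{B}^{\#}=A^{\#}(I-\mathcal{S})$ and verify the group-inverse axioms: writing $Z=A^{\#}(I-\mathcal{S})$, the identities $AA^{\#}\mathcal{S}=\mathcal{S}=\mathcal{S}AA^{\#}$ give $Z\overline{B}=A^{\#}A=AA^{\#}=\overline{B}Z$, and then $Z\overline{B}Z=Z$ and $\overline{B}Z\overline{B}=\overline{B}$, so indeed $\overline{B}^{\#}=A^{\#}(I-\mathcal{S})$. The nonnegativity $A^{\#}(I-\mathcal{S})\geq 0$ is the one computational point: it follows from the two-splitting analogue of the chain of identities established inside the proof of Theorem~\ref{thm4.4}, which rewrites $A^{\#}(I-\mathcal{S})$ as a sum of products of the nonnegative matrices $U^{\#}$, $X^{\#}$ and $VU^{\#}$. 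Granting this, $\overline{C}\,\overline{B}^{\#}=(\overline{B}-A)\overline{B}^{\#}=\overline{B}\,\overline{B}^{\#}-A\overline{B}^{\#}=AA^{\#}-AA^{\#}(I-\mathcal{S})=AA^{\#}\mathcal{S}=\mathcal{S}\geq 0$, so the type-II condition holds. Uniqueness is immediate: any splitting $A=\overline{B}_{1}-\overline{C}_{1}$ with $\mathcal{S}=\overline{C}_{1}\overline{B}_{1}^{\#}$ forces $\mathcal{S}\overline{B}_{1}=\overline{B}_{1}-A$, hence $(I-\mathcal{S})\overline{B}_{1}=A$ and $\overline{B}_{1}=(I-\mathcal{S})^{-1}A=\overline{B}$.

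The main obstacle is precisely the nonnegativity of $A^{\#}(I-\mathcal{S})$, which is the step where the type-II hypotheses on the individual splittings enter; everything else is formal manipulation of the group-inverse relations and the Neumann expansion. Since the two-splitting case involves $\mathcal{S}=YX^{\#}VU^{\#}$ with one fewer factor, the corresponding algebraic identity is shorter than, but structurally identical to, the one in Theorem~\ref{thm4.4}, so no new difficulty arises and the result follows as a specialization of Theorem~\ref{thm4.9}.
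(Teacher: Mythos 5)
Your proposal is correct and follows essentially the same route as the paper: the paper presents this corollary as the two-splitting reduction of Theorem~\ref{thm4.9}, and your argument reproduces that theorem's proof verbatim with $\mathcal{S}=YX^{\#}VU^{\#}$, invoking the two-splitting analogues of Theorem~\ref{thm4.3}, Lemma~\ref{lem4.8}, and Corollary~\ref{cor3.7} exactly where the paper's proof uses their three-splitting versions. Your identification of the key nonnegativity step, $A^{\#}(I-\mathcal{S})=U^{\#}+X^{\#}VU^{\#}\geq 0$, is precisely the two-factor analogue of the computation inside Theorem~\ref{thm4.4}, so nothing is missing.
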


In case of non-singular matrix $A$, the above result reduces to the following form, which is a part of \cite[Theorem 3]{climent:2003}.

\begin{corollary}
Let $A=U-V=X-Y$ be two weak regular splittings of type II of a  monotone matrix $A$ and $\mathcal{S}=YX^{-1}VU^{-1}$. Then $\mathcal{H}$ and $\mathcal{S}$ induce the same splitting $A=B-C$. Furthermore, the unique splitting $A=\overline{B}-\overline{C}$ induced by the matrix $\mathcal{S}$ is also a weak regular splitting of type II.
\end{corollary}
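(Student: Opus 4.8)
The plan is to obtain this statement as the non-singular specialization of Theorem~\ref{thm4.9} (equivalently, of its two-splitting corollary), the only genuine work being to check that the range/null-space hypotheses present in the group-inverse version become automatic here and may therefore be dropped. First I would record the reductions forced by $A$ being non-singular: monotonicity of $A$ means $A^{-1}$ exists with $A^{-1}\geq 0$, so $A$ is group monotone with $A^{\#}=A^{-1}$; moreover each weak regular splitting of type II $A=U-V=X-Y$ is automatically proper (properness of $U,X$ is vacuous once $\mathcal R(A)=\R^n$ and $\mathcal N(A)=\{0\}$) and satisfies $U^{\#}=U^{-1}\geq 0$, $VU^{\#}=VU^{-1}\geq 0$, and likewise for $X$. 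Hence it is a proper $G$-weak regular splitting of type II, so all the hypotheses of Theorem~\ref{thm4.9} hold except possibly the conditions on $\mathcal R(U+X-A)$ and $\mathcal N(U+X-A)$.

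Next I would establish $\rho(\mathcal H)=\rho(\mathcal S)<1$. Corollary~\ref{cor3.7}, specialized to the non-singular case, gives $\rho(\mathcal H)=\rho(X^{-1}YU^{-1}V)<1$, while Theorem~\ref{thm4.3}(ii) gives $\mathcal S=A\mathcal H A^{-1}$, so $\mathcal S$ and $\mathcal H$ are similar and $\rho(\mathcal S)=\rho(\mathcal H)<1$. This is the crucial simplification: since $1$ is not an eigenvalue of either matrix, $I-\mathcal H$ and $I-\mathcal S$ are invertible for free. In the singular setting these invertibilities were exactly what the range/null-space conditions secured through Theorem~\ref{thm4.3}(iii) and Lemma~\ref{lem4.2}; here they are unnecessary, which is precisely why the present corollary can omit them. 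I would then set $B=A(I-\mathcal H)^{-1}$ and $\overline B=(I-\mathcal S)^{-1}A$, both well-defined and non-singular, so $A=B-C$ with $C=B-A$ is automatically a splitting without any appeal to Lemma~\ref{lem4.2}.

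I would then transcribe, with the group inverse replaced throughout by the ordinary inverse, the two identities from Lemma~\ref{lem4.8} and Theorem~\ref{thm4.9}. From $B^{-1}=(I-\mathcal H)A^{-1}$ one reads off $B^{-1}C=I-B^{-1}A=I-(I-\mathcal H)=\mathcal H$, so $\mathcal H$ induces $A=B-C$. Using $\mathcal S^{k}=A\mathcal H^{k}A^{-1}$ together with the Neumann series of Theorem~\ref{neuman}, $\overline B=\sum_{k\geq 0}\mathcal S^{k}A=\sum_{k\geq 0}A\mathcal H^{k}=A(I-\mathcal H)^{-1}=B$, so $\mathcal H$ and $\mathcal S$ induce the same splitting. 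Uniqueness is the standard argument: any induced $\overline B_1$ obeys $\overline B_1(I-\mathcal H)=A$, forcing $\overline B_1=B$.

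Finally, to see that $A=\overline B-\overline C$ is weak regular of type II, I would specialize the nonnegativity chain from the proof of Theorem~\ref{thm4.4}, which here collapses to $\overline B^{-1}=A^{-1}(I-\mathcal S)=U^{-1}+X^{-1}VU^{-1}\geq 0$, since $U^{-1}\geq 0$ and $X^{-1}(VU^{-1})\geq 0$ as a product of nonnegative matrices; and $\overline C\,\overline B^{-1}=\mathcal S=(YX^{-1})(VU^{-1})\geq 0$ for the same reason. The main, and essentially only, obstacle is the bookkeeping of the first two steps, namely convincing oneself that $\rho(\mathcal H)<1$ on its own replaces the range/null-space hypotheses by making $I-\mathcal H$, $I-\mathcal S$ and $B$ invertible; once that is settled, the remaining computations are verbatim non-singular copies of the proofs of Lemma~\ref{lem4.8}, Theorem~\ref{thm4.9} and Theorem~\ref{thm4.4}.
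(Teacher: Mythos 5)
Your proposal is correct and takes essentially the same route as the paper, which presents this corollary without a separate proof as the non-singular specialization of Theorem~\ref{thm4.9} (through its two-splitting corollary, attributed to \cite{climent:2003}). Your one substantive addition --- observing that $\rho(\mathcal{H})=\rho(\mathcal{S})<1$, obtained from the non-singular form of Corollary~\ref{cor3.7} together with the similarity $\mathcal{S}=A\mathcal{H}A^{-1}$, already forces $I-\mathcal{H}$ and $I-\mathcal{S}$ to be invertible, so the range/null-space hypotheses of the singular setting may legitimately be dropped --- is precisely the justification this reduction requires, and your remaining computations ($B^{-1}C=\mathcal{H}$, $\overline{B}=B$ via the Neumann series, $\overline{B}^{-1}=U^{-1}+X^{-1}VU^{-1}\geq 0$, $\overline{C}\,\overline{B}^{-1}=\mathcal{S}\geq 0$) are faithful non-singular copies of the paper's arguments.
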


The next result shows that under some suitable assumptions, the three-step alternating scheme converges faster than the usual iteration scheme (\ref{eqn1.2}).

\begin{theorem}\label{thm4.10}
Let $A=K-L=U-V=X-Y$ be three proper G-weak regular splittings of type II of a group monotone matrix $A$ with $\mathcal{R}(K+X-A+YU^{\#}L)=\mathcal{R}(A)$ and $\mathcal{N}(K+X-A+YU^{\#}L)=\mathcal{N}(A)$. Let $A=B-C$ be the proper G-weak regular splitting of type II induced by $\mathcal{S}$. If $KB^{\#}\geq I$, $UB^{\#}\geq I$ and $XB^{\#}\geq I$, then $$\rho(\mathcal{H})\leq \min\{\rho(K^{\#}L),\rho(U^{\#}V),\rho(X^{\#}Y)\}<1.$$
\end{theorem}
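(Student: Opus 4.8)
The plan is to reduce everything to the single inequality $\rho(\mathcal{S})\leq \rho(K^{\#}L)$ and its two siblings. Since Theorem \ref{thm4.3} gives $\rho(\mathcal{H})=\rho(\mathcal{S})$ and Theorem \ref{thm4.4} already gives $\rho(\mathcal{H})<1$, it suffices to prove $\rho(\mathcal{S})\leq\rho(K^{\#}L)$ from the hypothesis $KB^{\#}\geq I$; the bounds $\rho(\mathcal{S})\leq\rho(U^{\#}V)$ and $\rho(\mathcal{S})\leq\rho(X^{\#}Y)$ then follow verbatim by replacing $(K,L)$ with $(U,V)$ and $(X,Y)$ and using $UB^{\#}\geq I$, $XB^{\#}\geq I$. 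Taking the minimum and appending the strict bound finishes the theorem. Throughout I use that, by Theorem \ref{thm4.9} and Lemma \ref{lem4.8}, the induced splitting $A=B-C$ is a proper G-weak regular splitting of type II with $B=(I-\mathcal{S})^{-1}A$, $B^{\#}=A^{\#}(I-\mathcal{S})$, $CB^{\#}=\mathcal{S}$, and $\mathcal{R}(B)=\mathcal{R}(A)$, $\mathcal{N}(B)=\mathcal{N}(A)$.

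The naive route is to show $\mathcal{S}\leq LK^{\#}$ entrywise and invoke monotonicity of the spectral radius of nonnegative matrices. This is where the main obstacle sits: one computes $LK^{\#}-\mathcal{S}=A(B^{\#}-K^{\#})$, and although premultiplying $KB^{\#}\geq I$ by $K^{\#}\geq 0$ does yield $B^{\#}\geq K^{\#}$ (using $K^{\#}KB^{\#}=A^{\#}AB^{\#}=B^{\#}$), the matrix $A$ is not nonnegative, so $A(B^{\#}-K^{\#})\geq 0$ cannot be inferred. My plan is to bypass this by passing to the nonnegative ``resolvent'' matrices $CA^{\#}$ and $LA^{\#}$. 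Using Theorem \ref{thm2.4}$(v)$ in the form $A^{\#}=K^{\#}(I-LK^{\#})^{-1}$ together with Theorem \ref{neuman} (note $\rho(K^{\#}L)=\rho(LK^{\#})<1$ by Theorem \ref{thm3.7}), I get $LA^{\#}=\sum_{k\geq 1}(LK^{\#})^{k}\geq 0$ with $\rho(LA^{\#})=\rho(K^{\#}L)/(1-\rho(K^{\#}L))$; and from $B=(I-\mathcal{S})^{-1}A$ together with $AA^{\#}\mathcal{S}=\mathcal{S}=\mathcal{S}AA^{\#}$ (Theorem \ref{thm4.3}$(i)$) I get $CA^{\#}=BA^{\#}-AA^{\#}=\sum_{k\geq 1}\mathcal{S}^{k}\geq 0$ with $\rho(CA^{\#})=\beta/(1-\beta)$, where $\beta=\rho(\mathcal{S})$. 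Because $t\mapsto t/(1-t)$ is increasing on $[0,1)$, it is enough to prove $\rho(CA^{\#})\leq\rho(LA^{\#})$.

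The crux is a localization at the Perron vector. Let $x\geq 0$, $x\neq 0$, be an eigenvector of $\mathcal{S}$ for $\beta$ (Theorem \ref{frob1}); assuming $\beta>0$ (the case $\beta=0$ is immediate since the target radii are nonnegative), we have $x=\beta^{-1}\mathcal{S}x\in\mathcal{R}(\mathcal{S})\subseteq\mathcal{R}(A)$, hence $AA^{\#}x=x$, and $CA^{\#}x=\sum_{k\geq 1}\beta^{k}x=(\beta/(1-\beta))x$. The decisive observation is that although $BA^{\#}$ need not be nonnegative, the vector $BA^{\#}x=AA^{\#}x+CA^{\#}x=(1+\beta/(1-\beta))x$ is nonnegative. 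Combining this with the identity $(K-B)A^{\#}=(KB^{\#}-I)\,BA^{\#}$, which follows from $B^{\#}B=AA^{\#}$ and $AA^{\#}A^{\#}=A^{\#}$, and with $KB^{\#}\geq I$, I obtain $(K-B)A^{\#}x=(1+\beta/(1-\beta))(KB^{\#}-I)x\geq 0$. Therefore $LA^{\#}x=CA^{\#}x+(K-B)A^{\#}x\geq(\beta/(1-\beta))x$, and Theorem \ref{frob2}$(i)$ applied to $LA^{\#}\geq 0$ gives $\beta/(1-\beta)\leq\rho(LA^{\#})=\rho(K^{\#}L)/(1-\rho(K^{\#}L))$, whence $\beta\leq\rho(K^{\#}L)$ by monotonicity of $t/(1-t)$.

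I expect the only real difficulty to be the non-nonnegativity of $A$, and the device above—testing against the Perron vector $x$, at which $BA^{\#}x$ collapses to a nonnegative multiple of $x$—is precisely what dissolves it; the remaining steps are the bookkeeping series identities for $CA^{\#}$ and $LA^{\#}$. In writing this up I would verify carefully that $x\neq 0$ is preserved, that the two geometric series converge (guaranteed by $\rho(\mathcal{S})<1$ and $\rho(LK^{\#})<1$), and that the range–null space hypotheses on $K+X-A+YU^{\#}L$ are exactly what make $A=B-C$ the proper G-weak regular splitting of type II needed for $B=(I-\mathcal{S})^{-1}A$ to hold.
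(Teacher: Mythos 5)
Your proposal is correct and is essentially the paper's own argument in lightly different packaging: both proofs localize at a Perron eigenvector $x\geq 0$ of the nonnegative matrix $\mathcal{S}=CB^{\#}$, expand $A^{\#}=K^{\#}(I-LK^{\#})^{-1}$ (Theorem \ref{thm2.4}(v)) as a Neumann series in the nonnegative matrix $LK^{\#}$, exploit the hypothesis $KB^{\#}\geq I$ at $x$, and conclude with Theorem \ref{frob2}(i). Indeed, your key inequality $\frac{\rho(\mathcal{S})}{1-\rho(\mathcal{S})}\,x\leq LA^{\#}x$ is precisely the paper's inequality $\frac{1}{1-\rho(\mathcal{S})}\,x\leq (I-LK^{\#})^{-1}x$ shifted by $x$, since $LA^{\#}=LK^{\#}(I-LK^{\#})^{-1}=(I-LK^{\#})^{-1}-I$, so the two comparisons via Theorem \ref{frob2}(i) are equivalent.
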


\begin{proof}
Since $A=K-L=U-V=X-Y$ are three proper G-weak regular splittings of type II of a group monotone matrix $A$, so by Theorem \ref{thm3.7}, we have $\rho(K^{\#}L)<1$, $\rho(U^{\#}V)<1$ and $\rho(X^{\#}Y)<1$. Considering the splittings $A=B-C$ and $A=K-L$, and using Theorem \ref{thm2.4} $(v)$, we have
\begin{equation}\label{eqnl1}
B^{\#}(I-CB^{\#})^{-1}=K^{\#}(I-LK^{\#})^{-1}.
\end{equation}
Pre-multiplying (\ref{eqnl1}) by $K$, we obtain
\begin{equation}\label{eqnl2}
  KB^{\#}(I-CB^{\#})^{-1}=KK^{\#}(I-LK^{\#})^{-1}.  
\end{equation}
As $CB^{\#}\geq0$, there exists an eigenvector $x\geq0$ $(x\neq 0)$ such that $CB^{\#}x=\rho(CB^{\#})x$ by Theorem \ref{frob1}. Post-multiplying (\ref{eqnl2}) by $x$, and using $KK^{\#}x=x$ and $KK^{\#}L=L$, we get $KB^{\#}(I-CB^{\#})^{-1}x=KK^{\#}(I-LK^{\#})^{-1}x$, i.e., $\displaystyle \dfrac{KB^{\#}}{1-\rho(B^{\#}C)}x=KK^{\#}\sum_{i=0}^{\infty}(LK^{\#})^{i}x=KK^{\#}(I+LK^{\#}+\dots)x=KK^{\#}x+KK^{\#}LK^{\#}x+\dots=x+LK^{\#}x+\dots=(I-LK^{\#})^{-1}x$. Since $KB^{\#}\geq I$, we have
$\dfrac{x}{1-\rho(B^{\#}C)}\leq (I-LK^{\#})^{-1}x.$
Thus, $\rho(B^{\#}C)=\rho(\mathcal{H})\leq \rho(K^{\#}L)$ by Theorem \ref{frob2}. Similarly, applying the same procedure to the pair of splittings $A=B-C$ and $A=U-V$, and $A=B-C$ and $A=X-Y$, we have $\rho(\mathcal{H})\leq \rho(U^{\#}V)<1$ and $\rho(\mathcal{H})\leq \rho(X^{\#}Y)<1$, respectively. Hence $\rho(\mathcal{H})\leq \min\{\rho(K^{\#}L), ~\rho(U^{\#}V),~\rho(X^{\#}Y)\}<1$.
 \end{proof}

The next result shows that under a few assumptions, the two-step alternating iteration scheme also converges faster than the usual iteration scheme \eqref{eqn1.2}.

\begin{corollary}\label{cor3.18}
Let $A=U-V=X-Y$ be two proper G-weak regular splittings of type II of a group monotone matrix $A$ with $\mathcal{R}(U+X-A)=\mathcal{R}(A)$ and $\mathcal{N}(U+X-A)=\mathcal{N}(A)$. Let $A=B-C$ be the proper G-weak regular splitting of type II induced by the matrix $X^{\#}YU^{\#}V$. If $UB^{\#}\geq I$ and $XB^{\#}\geq I$, then $\rho(U^{\#}VX^{\#}Y)\leq \min\{\rho(U^{\#}V),\rho(X^{\#}Y)\}<1$.
\end{corollary}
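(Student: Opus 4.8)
The plan is to recognize this as the two-step counterpart of Theorem \ref{thm4.10} and to replay that argument with only the two splittings $A=U-V=X-Y$ in hand. First I would record the spectral reductions that make the statement meaningful: since $A=U-V$ and $A=X-Y$ are proper G-weak regular splittings of type II of a group monotone matrix, Theorem \ref{thm3.7} gives $\rho(U^{\#}V)<1$ and $\rho(X^{\#}Y)<1$. Moreover, because $\rho(PQ)=\rho(QP)$ for any square $P,Q$, taking $P=X^{\#}Y$ and $Q=U^{\#}V$ yields $\rho(U^{\#}VX^{\#}Y)=\rho(X^{\#}YU^{\#}V)=\rho(\mathcal{H})$, so the claimed inequality is exactly $\rho(\mathcal{H})\le\min\{\rho(U^{\#}V),\rho(X^{\#}Y)\}$.

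Next I would invoke the two-step versions of Lemma \ref{lem4.8} and Theorem \ref{thm4.9} (the corollaries stated immediately after them): the induced splitting $A=B-C$ with $B=A(I-\mathcal{H})^{-1}$ is a proper G-weak regular splitting of type II, so that $CB^{\#}=\mathcal{S}=YX^{\#}VU^{\#}\ge 0$ and $\mathcal{H}=B^{\#}C$. The engine of the proof is then the identity from Theorem \ref{thm2.4}$(v)$ applied to the two proper splittings $A=B-C$ and $A=U-V$, namely $A^{\#}=B^{\#}(I-CB^{\#})^{-1}=U^{\#}(I-VU^{\#})^{-1}$; pre-multiplying by $U$ gives $UB^{\#}(I-CB^{\#})^{-1}=UU^{\#}(I-VU^{\#})^{-1}$.

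Now I would pick, via Theorem \ref{frob1}, a nonnegative Perron eigenvector $x\ge 0$, $x\ne 0$, of $CB^{\#}=\mathcal{S}$ with $\mathcal{S}x=\rho(\mathcal{S})x=\rho(\mathcal{H})x$. Since $\mathcal{S}=AA^{\#}\mathcal{S}$ (the two-step analogue of Theorem \ref{thm4.3}$(i)$), we have $x\in\mathcal{R}(A)=\mathcal{R}(U)$, hence $UU^{\#}x=x$; together with $UU^{\#}V=V$ this collapses the right-hand side, after post-multiplying by $x$ and expanding $(I-VU^{\#})^{-1}=\sum_{i\ge 0}(VU^{\#})^{i}$, to $\tfrac{1}{1-\rho(\mathcal{H})}\,UB^{\#}x=(I-VU^{\#})^{-1}x$. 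The hypothesis $UB^{\#}\ge I$ then forces $\tfrac{1}{1-\rho(\mathcal{H})}x\le(I-VU^{\#})^{-1}x$, and applying Theorem \ref{frob2}$(i)$ to the nonnegative matrix $(I-VU^{\#})^{-1}$ (nonnegative by Theorem \ref{neuman}, as $VU^{\#}\ge 0$ and $\rho(VU^{\#})<1$) yields $\tfrac{1}{1-\rho(\mathcal{H})}\le\rho\big((I-VU^{\#})^{-1}\big)=\tfrac{1}{1-\rho(VU^{\#})}$, i.e. $\rho(\mathcal{H})\le\rho(VU^{\#})=\rho(U^{\#}V)$. Running the identical argument with the pair $A=B-C$, $A=X-Y$ and the hypothesis $XB^{\#}\ge I$ gives $\rho(\mathcal{H})\le\rho(X^{\#}Y)$, and combining the two bounds finishes the proof.

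The step I expect to require the most care is the passage from $\tfrac{1}{1-\rho(\mathcal{H})}x\le(I-VU^{\#})^{-1}x$ to the spectral bound: one must justify that $\rho\big((I-VU^{\#})^{-1}\big)=\tfrac{1}{1-\rho(VU^{\#})}$ (spectral mapping together with nonnegativity, so that the Perron value $\rho(VU^{\#})$ is exactly the eigenvalue producing the largest $(1-\mu)^{-1}$), and to confirm that $x$ genuinely lies in $\mathcal{R}(U)$ so that $UU^{\#}x=x$ may be used. This is also where the range conditions $\mathcal{R}(U+X-A)=\mathcal{R}(A)$ and $\mathcal{N}(U+X-A)=\mathcal{N}(A)$ quietly enter, through the well-definedness of the induced type II splitting $A=B-C$.
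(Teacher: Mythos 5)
Your proposal is correct and follows essentially the same route as the paper: the paper proves Theorem \ref{thm4.10} by exactly this argument (Theorem \ref{thm3.7} for $\rho(U^{\#}V),\rho(X^{\#}Y)<1$, the induced splitting $B=A(I-\mathcal{H})^{-1}$ with $CB^{\#}=\mathcal{S}\geq 0$, the identity from Theorem \ref{thm2.4}$(v)$, a Perron eigenvector of $CB^{\#}$, and Theorems \ref{frob1}--\ref{frob2}), and states Corollary \ref{cor3.18} as its two-splitting specialization without separate proof. Your extra care in noting $\rho(U^{\#}VX^{\#}Y)=\rho(X^{\#}YU^{\#}V)$ and in justifying $x\in\mathcal{R}(A)$ and $\rho\big((I-VU^{\#})^{-1}\big)=\tfrac{1}{1-\rho(VU^{\#})}$ only makes explicit what the paper leaves implicit.
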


The conditions $KB^{\#}\geq I$, $UB^{\#}\geq I$ and $XB^{\#}\geq I$ can be dropped if the given splittings are proper G-weak regular splittings of both types. This is shown below.

\begin{theorem}\label{thm3.13}
Let $A=K-L=U-V=X-Y$ be three proper G-weak regular splittings of both types of a group monotone matrix $A$ with $\mathcal{R}(K+X-A+YU^{\#}L)=\mathcal{R}(A)$ and $\mathcal{N}(K+X-A+YU^{\#}L)=\mathcal{N}(A)$. Then $$\rho(\mathcal{H})\leq \min\{\rho(K^{\#}L),\rho(U^{\#}V),\rho(X^{\#}Y)\}<1.$$
\end{theorem}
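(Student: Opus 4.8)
The plan is to deduce the three bounds from the comparison result Theorem \ref{thm3.8}, exploiting that the ``both types'' hypothesis forces the splitting $A=B-C$ induced by $\mathcal{H}$ to dominate each of $K$, $U$ and $X$ through their group inverses simultaneously.

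First I would collect what is already available. By Lemma \ref{lem4.8}, $\mathcal{H}$ induces a unique proper splitting $A=B-C$ with $\mathcal{H}=B^{\#}C$ and $B^{\#}=X^{\#}(K+X-A+YU^{\#}L)K^{\#}$, so that $\rho(\mathcal{H})=\rho(B^{\#}C)$; by Theorem \ref{thm4.9} this splitting coincides with the one induced by $\mathcal{S}$ and is itself a proper G-weak regular splitting of type II. Since each of $A=K-L$, $A=U-V$, $A=X-Y$ is of both types, each is in particular a proper G-weak regular splitting of type I, and Theorem \ref{thm3.7} already yields $\rho(K^{\#}L),\rho(U^{\#}V),\rho(X^{\#}Y)<1$.

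The core of the proof is the three entrywise inequalities $B^{\#}\geq K^{\#}$, $B^{\#}\geq U^{\#}$ and $B^{\#}\geq X^{\#}$. The outer two are immediate from the two expansions of $B^{\#}$ already recorded: \eqref{eqn4.1} gives $B^{\#}-X^{\#}=X^{\#}YK^{\#}+X^{\#}YU^{\#}LK^{\#}$ and \eqref{eqn4.2} gives $B^{\#}-K^{\#}=X^{\#}LK^{\#}+X^{\#}YU^{\#}LK^{\#}$, and every summand is nonnegative once one uses $X^{\#}Y\geq 0$ (type I of $X-Y$), $LK^{\#}\geq 0$ (type II of $K-L$) together with $X^{\#},U^{\#}\geq 0$. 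The middle inequality $B^{\#}\geq U^{\#}$ is the delicate one. Here I would start from the form $B^{\#}=A^{\#}(I-\mathcal{S})=K^{\#}+U^{\#}+X^{\#}-U^{\#}AK^{\#}-X^{\#}AK^{\#}-X^{\#}AU^{\#}+X^{\#}AU^{\#}AK^{\#}$ computed inside the proof of Theorem \ref{thm4.4}, and simplify $B^{\#}-U^{\#}$ using the identities $K^{\#}-U^{\#}AK^{\#}=U^{\#}VK^{\#}$, $X^{\#}-X^{\#}AU^{\#}=X^{\#}VU^{\#}$ and $X^{\#}A=A^{\#}A-X^{\#}Y$ (all consequences of $A=U-V=X-Y$ together with $UU^{\#}=AA^{\#}=XX^{\#}$ and $U^{\#}U=A^{\#}A$ from Theorem \ref{thm2.4}). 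After the cancellation of the $U^{\#}VK^{\#}$ terms these collapse the expression to
$$B^{\#}-U^{\#}=X^{\#}VU^{\#}+X^{\#}YU^{\#}VK^{\#},$$
where $X^{\#}VU^{\#}\geq 0$ because $VU^{\#}\geq 0$ (type II of $U-V$) and $X^{\#}YU^{\#}VK^{\#}\geq 0$ because $X^{\#}Y\geq 0$ and $U^{\#}V\geq 0$ (type I of $X-Y$ and of $U-V$); hence $B^{\#}\geq U^{\#}$. I expect this cancellation to be the main obstacle: each factor $AU^{\#}$, $U^{\#}A$, $X^{\#}A$ must first be rewritten in its range/null-space--corrected form before any sign can be read off, and it is precisely here that both the type I and the type II hypotheses on the middle splitting are consumed, which is what allows the assumptions $KB^{\#}\geq I$, $UB^{\#}\geq I$, $XB^{\#}\geq I$ of Theorem \ref{thm4.10} to be dropped.

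Finally I would invoke Theorem \ref{thm3.8} three times, each time pairing the type II splitting $A=B-C$ with one of the type I splittings. From $B^{\#}\geq K^{\#}$ it returns $\rho(\mathcal{H})=\rho(B^{\#}C)\leq\rho(K^{\#}L)$, and likewise $B^{\#}\geq U^{\#}$ and $B^{\#}\geq X^{\#}$ give $\rho(\mathcal{H})\leq\rho(U^{\#}V)$ and $\rho(\mathcal{H})\leq\rho(X^{\#}Y)$. Taking the minimum of the three bounds, each already strictly below $1$, yields $\rho(\mathcal{H})\leq\min\{\rho(K^{\#}L),\rho(U^{\#}V),\rho(X^{\#}Y)\}<1$.
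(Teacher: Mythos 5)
Your proof is correct and follows essentially the same route as the paper: form the induced type II splitting $A=B-C$, establish the three entrywise bounds $B^{\#}\geq X^{\#}$, $B^{\#}\geq K^{\#}$ (from \eqref{eqn4.1} and \eqref{eqn4.2}) and $B^{\#}\geq U^{\#}$ (via the same collapse $B^{\#}=U^{\#}+X^{\#}VU^{\#}+X^{\#}YU^{\#}VK^{\#}$), and then apply Theorem \ref{thm3.8} three times. The only cosmetic differences are that the paper derives the middle bound starting from $B^{\#}=X^{\#}(YU^{\#}VK^{\#}+YU^{\#}+I)$ rather than from $B^{\#}=A^{\#}(I-\mathcal{S})$, and that you are in fact slightly more careful than the paper in attributing the type II property of the induced splitting to Theorem \ref{thm4.9} rather than to Lemma \ref{lem4.8} alone.
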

\begin{proof}
Let $A=B-C$ be a splitting induced by $S$. Since $A=K-L=U-V=X-Y$ are proper G-weak splittings of type II, so $A=B-C$ is proper G-weak regular splitting of type II by Lemma \ref{lem4.8}. From \eqref{eqn4.1} and \eqref{eqn4.2}, we have
\begin{align*}
   B^{\#}=X^{\#}+X^{\#}YK^{\#}+X^{\#}YK^{\#}LU^{\#}\geq X^{\#}
\end{align*}
and 
\begin{align*}
B^{\#}=K^{\#}+X^{\#}LK^{\#}+X^{\#}YU^{\#}LK^{\#}\geq K^{\#}.
\end{align*}
Applying Theorem \ref{thm3.8} to the pair of splittings $A=B-C$ and $A=X-Y$, and $A=B-C$ and $A=K-L$, we get $\rho(\mathcal{H})\leq \rho(X^{\#}Y)$ and $\rho(\mathcal{H})\leq \rho(K^{\#}L)$, respectively. Again,
\begin{align*}
   B^{\#}&=X^{\#}(YU^{\#}VK^{\#}+YU^{\#}+I)\\
         &=X^{\#}YU^{\#}VK^{\#}+X^{\#}XU^{\#}-X^{\#}AU^{\#}+X^{\#}UU^{\#}\\
         &=X^{\#}YU^{\#}VK^{\#}+U^{\#}+X^{\#}(U-A)U^{\#}\\
         &=X^{\#}YU^{\#}VK^{\#}+U^{\#}+X^{\#}VU^{\#}\\
         &\geq U^{\#}.
\end{align*}
So, by Theorem \ref{thm3.8}, we have $\rho(\mathcal{H})=\rho(B^{\#}C)\leq \rho(U^{\#}V)$. Hence $$\rho(\mathcal{H})\leq \min\{\rho(K^{\#}L), \rho(U^{\#}V),~\rho(X^{\#}Y)\}<1.$$
 \end{proof}

 The group inverse version of \cite[Theorem 4.13]{giri:2017}, which applies to the two-step alternating iteration scheme \eqref{eqn1.7}, is obtained as a corollary below.

\begin{corollary}\label{cor3.21}
Let $A=U-V=X-Y$ be two proper G-weak regular splittings of both types of a group monotone matrix $A$ with $\mathcal{R}(U+X-A)=\mathcal{R}(A)$ and $\mathcal{N}(U+X-A)=\mathcal{N}(A)$. Then $$\rho(U^{\#}VK^{\#}L)\leq \min\{\rho(U^{\#}V),\rho(X^{\#}Y)\}<1.$$
\end{corollary}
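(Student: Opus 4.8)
The plan is to run the argument of Theorem~\ref{thm3.13} in the two-splitting setting, so that the three-step iteration matrix $\mathcal{H}=X^{\#}YU^{\#}VK^{\#}L$ is replaced by the two-step matrix $\mathcal{H}=X^{\#}YU^{\#}V$ of scheme~\eqref{eqn1.7}, with $\mathcal{S}=YX^{\#}VU^{\#}$. (The right-hand quantity $\rho(U^{\#}VK^{\#}L)$ in the statement should read $\rho(U^{\#}VX^{\#}Y)$; since $\rho(PQ)=\rho(QP)$ for any square $P,Q$, this equals $\rho(X^{\#}YU^{\#}V)=\rho(\mathcal{H})$, so it suffices to bound $\rho(\mathcal{H})$.) A naive substitution such as $K=A$, $L=0$ into Theorem~\ref{thm3.13} does not help, since it collapses $\mathcal{H}$ to the zero matrix; instead I would reproduce the proof with two splittings.

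First I would invoke the two-splitting forms of Lemma~\ref{lem4.8} and Theorem~\ref{thm4.9} (the corollaries stated right after them): under $\mathcal{R}(U+X-A)=\mathcal{R}(A)$ and $\mathcal{N}(U+X-A)=\mathcal{N}(A)$, the matrix $\mathcal{S}$ (equivalently $\mathcal{H}$) induces a unique proper G-weak regular splitting of type~II, say $A=B-C$, with $\mathcal{H}=B^{\#}C$ and $B^{\#}=(I-\mathcal{H})A^{\#}$. Expanding $B^{\#}$ exactly as in the two-splitting shadow of \eqref{eqn4.1}--\eqref{eqn4.2} (substituting $Y=X-A$, $V=U-A$ and repeatedly using $X^{\#}AA^{\#}=X^{\#}$, $X^{\#}X=A^{\#}A=U^{\#}U$ and $UU^{\#}=AA^{\#}$), I would obtain the two representations
\begin{align*}
B^{\#}=X^{\#}+X^{\#}YU^{\#},\qquad B^{\#}=U^{\#}+X^{\#}VU^{\#}.
\end{align*}

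Because both splittings are assumed to be of both types, I have $X^{\#}\geq 0$, $U^{\#}\geq 0$, $X^{\#}Y\geq 0$ (type~I) and $VU^{\#}\geq 0$ (type~II). The first representation then gives $B^{\#}=X^{\#}+(X^{\#}Y)U^{\#}\geq X^{\#}$, and the second gives $B^{\#}=U^{\#}+X^{\#}(VU^{\#})\geq U^{\#}$. Since $A=B-C$ is a proper G-weak regular splitting of type~II while $A=X-Y$ and $A=U-V$ are in particular of type~I, Theorem~\ref{thm3.8} applied to the pairs $(B-C,\,X-Y)$ and $(B-C,\,U-V)$ yields $\rho(\mathcal{H})=\rho(B^{\#}C)\leq\rho(X^{\#}Y)<1$ and $\rho(\mathcal{H})\leq\rho(U^{\#}V)<1$, respectively, which is the asserted bound.

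The only genuinely delicate step is the bookkeeping that produces the two closed forms for $B^{\#}$: one must track which projector identity ($X^{\#}AA^{\#}=X^{\#}$ versus $A^{\#}AU^{\#}=U^{\#}$, and so on) is used at each cancellation, so that the surviving terms are manifestly products of nonnegative factors of the correct handedness — the left form must expose the type~I product $X^{\#}Y$ and the right form the type~II product $VU^{\#}$, since these are precisely the signs guaranteed by the hypothesis of both types. Everything else is the two-splitting image of computations already performed in Theorem~\ref{thm3.13}.
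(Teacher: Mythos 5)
Your proof is correct and is essentially the paper's own argument: the paper offers no separate proof for this corollary, presenting it as the two-splitting specialization of Theorem~\ref{thm3.13}, and your derivation (induced type~II splitting $A=B-C$ via the two-splitting corollaries of Lemma~\ref{lem4.8} and Theorem~\ref{thm4.9}, the expansions $B^{\#}=X^{\#}+X^{\#}YU^{\#}=U^{\#}+X^{\#}VU^{\#}$, and two applications of Theorem~\ref{thm3.8}) is exactly that specialization. You also correctly diagnosed both the typo ($K^{\#}L$ should involve only $U,V,X,Y$) and the fact that a naive substitution like $K=A$, $L=0$ into the three-splitting theorem would not yield the result.
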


We next show that the iteration matrix corresponding to the three-step alternating iteration scheme converges faster than the iteration matrix corresponding to the two-step alternating iteration scheme in case of proper G-weak regular splittings of type II under a few assumptions.

\begin{theorem}\label{thm4.11}
Let $A=K-L=U-V=X-Y$ be three proper G-weak regular splittings of type II of a group monotone matrix $A$ with $\mathcal{R}(K+X-A+YU^{\#}L)=\mathcal{R}(A)$ and $\mathcal{N}(K+X-A+YU^{\#}L)=\mathcal{N}(A)$. Let $A=B-C$, $A=B_{12}-C_{12}$, $A=B_{13}-C_{13}$ and $A=B_{23}-C_{23}$ be proper G-weak regular splittings of type II induced by the matrices $\mathcal{S}$, $U^{\#}VK^{\#}L$, $X^{\#}YK^{\#}L$, and $X^{\#}YU^{\#}V$, respectively. If $B_{12}B^{\#}\geq I$, $B_{13}B^{\#}\geq I$ and $B_{23}B^{\#}\geq I$, then $$\rho(\mathcal{H})\leq
\min\{\rho(K^{\#}LU^{\#}V), \rho(X^{\#}YK^{\#}L ), \rho(X^{\#}YU^{\#}V)\} < 1.$$
\end{theorem}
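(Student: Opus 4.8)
The structure of Theorem~\ref{thm4.11} mirrors exactly the structure of Theorem~\ref{thm4.10} and Theorem~\ref{thm3.13}, so the plan is to reduce the three-step iteration matrix $\mathcal{H}$, splitting-by-splitting, against each of the three two-step iteration matrices $U^{\#}VK^{\#}L$, $X^{\#}YK^{\#}L$ and $X^{\#}YU^{\#}V$. The key comparison engine will be the argument used in Theorem~\ref{thm4.10}: given two proper splittings inducing iteration matrices whose induced $B^{\#}$-factors are related by a domination inequality of the form $B_{ij}B^{\#}\ge I$, one extracts the Perron eigenvector of the relevant nonnegative matrix (via Theorem~\ref{frob1}), uses the resolvent identity from Theorem~\ref{thm2.4}$(v)$, and concludes the spectral-radius inequality by Theorem~\ref{frob2}.

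First I would record that, since all three splittings are proper G-weak regular of type II of a group monotone matrix, Corollary~\ref{cor3.7} gives $\rho(U^{\#}VK^{\#}L)<1$, $\rho(X^{\#}YK^{\#}L)<1$ and $\rho(X^{\#}YU^{\#}V)<1$ for the three two-step schemes (each pair of splittings is itself a valid pair of type II splittings). Next, by Lemma~\ref{lem4.8} (applied three times, once to each pair of splittings) together with the present range/null-space hypotheses, the four induced splittings $A=B-C$, $A=B_{12}-C_{12}$, $A=B_{13}-C_{13}$, $A=B_{23}-C_{23}$ are all genuine proper splittings, with $B=A(I-\mathcal{H})^{-1}$ and $B_{ij}=A(I-\mathcal{H}_{ij})^{-1}$ for the respective two-step iteration matrices $\mathcal{H}_{ij}$. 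The heart of the proof is then three parallel applications of the Theorem~\ref{thm4.10} machinery: for the pair $(A=B-C,\,A=B_{23}-C_{23})$, Theorem~\ref{thm2.4}$(v)$ gives $B^{\#}(I-CB^{\#})^{-1}=B_{23}^{\#}(I-C_{23}B_{23}^{\#})^{-1}$; pre-multiplying by $B_{23}$, post-multiplying by the Perron eigenvector $x\ge 0$ of $CB^{\#}$, and invoking the hypothesis $B_{23}B^{\#}\ge I$ yields $\tfrac{x}{1-\rho(B^{\#}C)}\le (I-C_{23}B_{23}^{\#})^{-1}x$, whence $\rho(\mathcal{H})=\rho(B^{\#}C)\le \rho(B_{23}^{\#}C_{23})=\rho(X^{\#}YU^{\#}V)$ by Theorem~\ref{frob2}. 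Repeating verbatim with $B_{12}B^{\#}\ge I$ and $B_{13}B^{\#}\ge I$ gives $\rho(\mathcal{H})\le\rho(K^{\#}LU^{\#}V)$ and $\rho(\mathcal{H})\le\rho(X^{\#}YK^{\#}L)$, respectively, and taking the minimum completes the argument.

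The main obstacle I anticipate is bookkeeping rather than conceptual: one must verify that each induced two-step iteration matrix $\mathcal{H}_{ij}=B_{ij}^{\#}C_{ij}$ really equals the claimed product (e.g.\ that $B_{23}^{\#}C_{23}=X^{\#}YU^{\#}V$), which requires knowing that the relevant range/null-space conditions of Lemma~\ref{lem4.8} hold for each two-step pair. The stated hypothesis is a single condition on $K+X-A+YU^{\#}L$, so a careful step is to confirm that the same condition (or its obvious two-step analogue $U+X-A$, etc.) suffices to invoke Lemma~\ref{lem4.8} for each of the three subsidiary splittings; this is where I would spend the most care. Once the four induced splittings are legitimately in hand, the three domination arguments are identical in form to Theorem~\ref{thm4.10}, and the proof is a routine triple application followed by a minimum.
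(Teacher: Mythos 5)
Your proposal is correct and follows essentially the same route as the paper: the paper's proof is precisely ``apply the argument of Theorem~\ref{thm4.10} to the pairs $A=B-C$ and $A=B_{12}-C_{12}$, $A=B-C$ and $A=B_{13}-C_{13}$, $A=B-C$ and $A=B_{23}-C_{23}$,'' which is exactly the triple domination argument you spell out (Theorem~\ref{thm2.4}$(v)$, Perron eigenvector of $CB^{\#}$, hypothesis $B_{ij}B^{\#}\geq I$, then Theorem~\ref{frob2}). The bookkeeping concern you raise is already absorbed into the theorem's hypotheses, since the statement assumes outright that the $A=B_{ij}-C_{ij}$ are proper G-weak regular splittings of type II induced by the corresponding two-step iteration matrices.
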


\begin{proof}
Using the same argument as in Theorem \ref{thm4.10} to the pair of splittings $A=B-C$ and $A=B_{12}-C_{12}$, $A=B-C$ and $A=B_{13}-C_{13}$, and $A=B-C$ and $A=B_{23}-C_{23}$, we get
$$\rho(\mathcal{H})\leq
\min\{\rho(K^{\#}LU^{\#}V), \rho(X^{\#}YK^{\#}L ), \rho(X^{\#}YU^{\#}V)\} < 1.$$
 \end{proof}

The following is the non-singular version of Theorem \ref{thm4.11}.

\begin{corollary}\label{cor3.23}
Let $A=K-L=U-V=X-Y$ be three weak regular splittings of type II of a  monotone matrix $A$. Let $A=B-C$, $A=B_{12}-C_{12}$, $A=B_{13}-C_{13}$ and $A=B_{23}-C_{23}$ be weak regular splittings of type II induced by the matrices $\mathcal{S}$, $U^{-1}VK^{-1}L$, $X^{-1}YK^{-1}L$ and $X^{-1}YU^{-1}V$, respectively. If $B_{12}B^{-1}\geq I$, $B_{13}B^{-1}\geq I$ and $B_{23}B^{-1}\geq I$, then $$\rho(\mathcal{H})\leq
\min\{\rho(K^{-1}LU^{-1}V), \rho(X^{-1}YK^{-1}L), \rho(X^{-1}YU^{-1}V)\} < 1.$$
\end{corollary}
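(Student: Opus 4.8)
The plan is to obtain Corollary \ref{cor3.23} as the non-singular specialization of Theorem \ref{thm4.11}. When $A$ is nonsingular and monotone we have $A^{-1}\geq 0$; the group inverse then exists and equals $A^{-1}$, so $A$ is group monotone and the index-one hypothesis of Theorem \ref{thm4.11} holds trivially. Moreover, each weak regular splitting of type II is automatically a proper $G$-weak regular splitting of type II: nonsingularity of $U$ forces $\mathcal{R}(U)=\mathbb{R}^{n}=\mathcal{R}(A)$ and $\mathcal{N}(U)=\{0\}=\mathcal{N}(A)$, while $U^{-1}\geq 0$ and $VU^{-1}\geq 0$ are exactly the remaining requirements; the same applies to the splittings $K-L$ and $X-Y$ and to the induced splittings. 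Thus every structural hypothesis of Theorem \ref{thm4.11} other than the range/null-space side conditions is immediate, and all group inverses collapse to ordinary inverses.

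The one point that needs checking is that the side conditions $\mathcal{R}(K+X-A+YU^{-1}L)=\mathcal{R}(A)$ and $\mathcal{N}(K+X-A+YU^{-1}L)=\mathcal{N}(A)$ of Theorem \ref{thm4.11}, which are \emph{not} listed among the hypotheses here, hold automatically. Since $A$ is monotone, the non-singular form of Theorem \ref{thm4.4} gives $\rho(\mathcal{H})<1$, so $I-\mathcal{H}$ is invertible and $B=A(I-\mathcal{H})^{-1}$ is nonsingular. Specializing \eqref{eqn4.1} to ordinary inverses yields $B^{-1}=X^{-1}(K+X-A+YU^{-1}L)K^{-1}$, whence
\[
K+X-A+YU^{-1}L=XB^{-1}K
\]
is a product of three nonsingular matrices and is therefore nonsingular. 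Consequently its range equals $\mathbb{R}^{n}=\mathcal{R}(A)$ and its null space equals $\{0\}=\mathcal{N}(A)$, so the side conditions are vacuously satisfied.

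With these verifications in hand, and recalling that $\rho(MN)=\rho(NM)$ gives $\rho(B_{12}^{-1}C_{12})=\rho(U^{-1}VK^{-1}L)=\rho(K^{-1}LU^{-1}V)$ (and similarly for the other two induced splittings), Theorem \ref{thm4.11} applies verbatim after replacing every group inverse by the corresponding ordinary inverse, and delivers
\[
\rho(\mathcal{H})\leq\min\{\rho(K^{-1}LU^{-1}V),\rho(X^{-1}YK^{-1}L),\rho(X^{-1}YU^{-1}V)\}<1.
\]
Equivalently, one may rerun the argument of Theorem \ref{thm4.10} directly on each pair of induced splittings, e.g. $A=B-C$ and $A=B_{12}-C_{12}$: apply Theorem \ref{thm2.4}$(v)$ to write $B^{-1}(I-CB^{-1})^{-1}=B_{12}^{-1}(I-C_{12}B_{12}^{-1})^{-1}$, premultiply by $B_{12}$, evaluate at the Perron eigenvector of $C_{12}B_{12}^{-1}\geq 0$ supplied by Theorem \ref{frob1}, and combine $B_{12}B^{-1}\geq I$ with Theorem \ref{frob2}. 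The main obstacle is thus not analytic but a matter of bookkeeping: one must confirm that dropping the range/null-space conditions is legitimate, which is precisely the nonsingularity argument above; once that is settled the result is a mechanical transcription of Theorem \ref{thm4.11}.
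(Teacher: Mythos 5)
Your proposal is correct and takes essentially the same route as the paper, which states Corollary \ref{cor3.23} without proof as the immediate non-singular specialization of Theorem \ref{thm4.11}. Your extra step---showing that the side conditions $\mathcal{R}(K+X-A+YU^{-1}L)=\mathcal{R}(A)$ and $\mathcal{N}(K+X-A+YU^{-1}L)=\mathcal{N}(A)$ hold automatically in the nonsingular case, via $\rho(\mathcal{H})<1$ and the identity $K+X-A+YU^{-1}L=XB^{-1}K$---correctly fills in the one detail the paper leaves implicit when it drops those hypotheses from the corollary.
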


\begin{example}
Consider the two-dimensional Laplace's equation
\begin{align}\label{GPE}
\frac{\partial^2 u}{\partial x^2}
      + \frac{\partial^2 u}{\partial y^2} = 0, ~~~~\hspace{0.4cm}0\leq x\leq 1,~0\leq y\leq 1
\end{align}
with boundary conditions
$$u(x, y)|_{\partial R} = x+y+xy.$$
Let the square region $R=\left\{(x,y):0\leq x\leq 1,~0\leq y\leq 1\right\}$ be covered by a grid with sides
parallel to the coordinate axis and with an equal grid spacing $h=\Delta x=\Delta y$. If
$Nh = 1$, then the number of internal grid points is $(N - 1)^2$. The finite difference method using the $\mathcal{O}(h^2)$ central difference discretization on uniform grids generates the linear system $Ax = b$, where $A$ is of order $(N-1)^2\times (N-1)^2$ and $b$ is the right-hand side vector derived from the Dirichlet boundary conditions. 
The coefficient matrix $A$ is of the form $A=I\kronecker J +J\kronecker I.$
Here, $\kronecker$ is the  Kronecker product, $I$ is an identity matrix of order $(N-1)\times (N-1)$ and 
$J=tridiagonal\left(-1,4,-1\right)$ is of order $(N-1)\times (N-1)$.\\
Setting $K=diag(A)$, $U=1.5diag(A)$ and $X=1.75diag(A)$, we get three weak regular splittings of type II $A=K-L=U-V=X-Y$  of the monotone matrix $A$. Using these splittings, we have compared the computational aspects of three-step, two-step, and single-step iterative schemes in Table \ref{tab:1}.
\end{example}

\begin{table}[H]
    \centering
     \caption{Comparison table}
    \begin{tabular}{ccccccc}
    \hline
     Order &Splitting &IT &$||b-Ax_n||_2$ &$||A^{-1}b-x_n||_2$ &Time &$\rho$\\
        \hline
         \multirow{3}{*} {400} &\tiny{\bf Three-step} &672
 &4.4511e-08 &9.9629e-07  &0.02  &0.9752\\
       &\tiny{\bf Two-step} &902 &4.4374e-08 &9.9323e-07  &0.03  &0.9815 \\
       &\tiny{\bf Single-step} &1502
 &4.4629e-08  &9.9893e-07  &0.12  &0.9888\\
        \hline
        \multirow{3}{*} {1600} &\tiny{\bf Three-step} & 2669 &1.1661e-08 &9.9350e-07  &5.83  &0.9934\\
       &\tiny{\bf Two-step} &3583 &1.1688e-08 &9.9583e-07  &8.19  &0.9951 \\
       &\tiny{\bf Single-step} &5970 &1.1717e-08
 &9.9831e-07  &16.28  &0.9971\\
        \hline
        \multirow{3}{*} {4900} &\tiny{\bf Three-step} &8256  &3.9092e-09 &9.9849e-07  &177.05  &0.9978\\
       &\tiny{\bf Two-step} &11086
 &3.9092e-09 &9.9848e-07  &244.08  &0.9984 \\
       &\tiny{\bf Single-step} &18475
 &3.9123e-09  &9.9929e-07  &515.14  &0.9990\\
        \hline
        \multirow{3}{*} {6400} &\tiny{\bf Three-step} & 10824 &3.0051e-09 &9.9896e-07  &386.60  &0.9983\\
       &\tiny{\bf Two-step} &14534
 &3.0067e-09 &9.9952e-07  &567.94  &0.9987 \\
       &\tiny{\bf Single-step} &24222 &3.0078e-09
 &9.9989e-07  &1041.26  &0.9992\\
        \hline
       \multirow{3}{*} {10000} &\tiny{\bf Three-step} &17034 &1.9329e-09 &9.9896e-07  &1509.86  &0.9989\\
       &\tiny{\bf Two-step} &22873 &1.9338e-09 &9.9943e-07  &2261.70  &0.9992 \\
       &\tiny{\bf Single-step} &38120 &1.9345e-09 &9.9981e-07  &4043.28  &0.9995\\
        \hline
 \multirow{3}{*} {12100} &\tiny{\bf Three-step} &20679 &1.6014e-09 &9.9966e-07  &3041.72  &0.9991\\
       &\tiny{\bf Two-step} &27768 &1.6017e-09 &9.9986e-07  &3674.84  &0.9993 \\
       &\tiny{\bf Single-step} &46279 &1.6018e-09 &9.9991e-07  &6965.13  &0.9996\\
        \hline
    \end{tabular}
    \label{tab:1}
\end{table}

\section{Semiconvergence of three-step alternating scheme}\label{sec4}
This section is divided into two subsections. In the subsection \ref{subsec4.1}, we discuss the semiconvergence of three-step alternating iterative scheme when the coefficient matrix $A$ is a singular $M$-matrix and the splittings are regular (i.e., $U^{-1}\geq 0$ and $V\geq 0$) or weak regular type I (i.e., $U^{-1}\geq 0$ and $U^{-1}V\geq 0$) or weak regular type II (i.e., $U^{-1}\geq 0$ and $VU^{-1}\geq 0$). In the subsection \ref{subsec4.2}, the coefficient matrix $A$ is only singular and the splittings are quasi-regular or quasi-weak regular type I (or type II).

\subsection{When $A$ is a singular $M$-matrix}\label{subsec4.1}

The following result provides a characterization of a semiconvergent matrix.

\begin{theorem}\label{semi}\textnormal{(\cite{neumann})}\\
Let $T\in \mathbb{R}^{n\times n}$ with $\rho(T)=1$. Then, $T$ is semiconvergent if and only if the following statements hold:
\begin{itemize}
    \item[(i)] $1\in \sigma(T)$ and $\gamma(T)<1$;
    \item[(ii)] $\mathcal{N}(I-T)\oplus \mathcal{R}(I-T)=\mathbb{R}^n$.
\end{itemize}
 \end{theorem}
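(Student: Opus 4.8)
The plan is to reduce everything to the Jordan canonical form of $T$. Writing $T=PJP^{-1}$ gives $T^k=PJ^kP^{-1}$, so $T$ is semiconvergent if and only if $\lim_{k\to\infty}J^k$ exists, which in turn reduces to analyzing the powers of a single Jordan block $J_i(\lambda)$. A block of size $m$ behaves as follows: $J_i(\lambda)^k\to 0$ when $|\lambda|<1$; $J_i(\lambda)^k$ fails to converge when $|\lambda|=1$ with $\lambda\neq 1$ (its diagonal $\lambda^k$ oscillates on the unit circle); $J_i(1)^k$ converges if and only if $m=1$ (for $m\geq 2$ the superdiagonal entries grow like $\binom{k}{j}$ and diverge); and the block diverges when $|\lambda|>1$. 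Thus, given $\rho(T)=1$, semiconvergence is equivalent to the statement that $1$ is the only eigenvalue of modulus one and that all its Jordan blocks are $1\times 1$, i.e. $\lambda=1$ is semisimple.

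For the forward implication I would assume $T$ is semiconvergent with $\rho(T)=1$. Any eigenvalue $\lambda$ with $|\lambda|=1$ and $\lambda\neq 1$ would yield an eigenvector $x$ (over $\mathbb{C}^n$) with $T^kx=\lambda^kx$ non-convergent, contradicting semiconvergence; since $\rho(T)=1$ forces at least one unit-modulus eigenvalue, that eigenvalue must be $1$, which gives $1\in\sigma(T)$ and $\gamma(T)<1$, namely (i). In the same spirit, a Jordan block of size $\geq 2$ attached to $\lambda=1$ would make $T^k$ diverge, so $\lambda=1$ must be semisimple, i.e. $I-T$ has index one.

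The reverse implication I would handle through the decomposition furnished by (ii). Both $\mathcal{N}(I-T)$ and $\mathcal{R}(I-T)$ are $T$-invariant, and because the sum is direct one has $\mathcal{N}(I-T)\cap\mathcal{R}(I-T)=\{0\}$, so $1$ is not an eigenvalue of the restriction $T|_{\mathcal{R}(I-T)}$; hence $\sigma(T|_{\mathcal{R}(I-T)})=\sigma(T)\setminus\{1\}$ and $\rho(T|_{\mathcal{R}(I-T)})=\gamma(T)<1$ by (i). Writing $x=u+v$ with $u\in\mathcal{N}(I-T)$ and $v\in\mathcal{R}(I-T)$, I get $T^kx=u+T^kv\to u$, so $T^k$ converges to the projector $P_{\mathcal{N}(I-T),\mathcal{R}(I-T)}$ and $T$ is semiconvergent.

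The technical heart that links the spectral picture to condition (ii) is the equivalence
\[
\lambda=1 \text{ is semisimple}\iff \mathcal{N}(I-T)\oplus\mathcal{R}(I-T)=\mathbb{R}^n,
\]
which I would establish via the index-one characterization of $M:=I-T$. Rank–nullity already forces $\dim\mathcal{N}(M)+\dim\mathcal{R}(M)=n$, so the direct sum holds precisely when $\mathcal{N}(M)\cap\mathcal{R}(M)=\{0\}$, and a short argument shows this intersection is trivial exactly when $\mathcal{N}(M)=\mathcal{N}(M^2)$, i.e. when $M$ has index one, which is the semisimplicity of the eigenvalue $1$. This bookkeeping, together with the care needed to verify that the restricted spectrum on $\mathcal{R}(I-T)$ genuinely excludes $1$, is the main obstacle; the Jordan-block power computations themselves are routine.
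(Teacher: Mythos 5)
Your proposal is correct, but there is nothing in the paper to compare it against: Theorem~\ref{semi} is stated as a known result imported from Neumann and Plemmons \cite{neumann} and is used as a black box, with no proof given in the paper (the only in-text addition is the remark that condition (ii) amounts to $(I-T)$ having a group inverse, which is exactly the index-one reformulation at the heart of your argument). Taken on its own merits, your Jordan-form proof is the standard one and is complete in outline: the block-by-block power analysis correctly reduces semiconvergence (under $\rho(T)=1$) to the statement that $1$ is the only unit-modulus eigenvalue and is semisimple; the rank--nullity argument correctly identifies semisimplicity of $1$, i.e.\ $\mathcal{N}(I-T)=\mathcal{N}((I-T)^2)$, with the direct-sum condition (ii); and your reverse implication is actually Jordan-free, since the $T$-invariance of $\mathcal{N}(I-T)$ and $\mathcal{R}(I-T)$ plus $\rho\bigl(T|_{\mathcal{R}(I-T)}\bigr)=\gamma(T)<1$ immediately gives $T^k\to P_{\mathcal{N}(I-T),\mathcal{R}(I-T)}$, which is a cleaner route than pushing everything through the canonical form. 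Two cosmetic points you should make explicit if you write this up: first, in the forward direction you invoke eigenvectors in $\mathbb{C}^n$ for a real matrix, which is fine but deserves the one-line remark that entrywise convergence of $T^k$ implies convergence of $T^kx$ for complex $x$; second, the degenerate case $\sigma(T)=\{1\}$ (e.g.\ $T=I$) makes $\gamma(T)$ a maximum over the empty set, so you need the convention $\gamma(T)=0$ there, under which your argument goes through verbatim.
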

Recall that the last condition $(ii)$ is equivalent to the existence of the group inverse of the matrix $(I-T)$.

Alefeld \cite{Alefeld} obtained the following result for semiconvergence of a nonnegative matrix with positive diagonal entries.
\begin{theorem}\label{alefeld}\textnormal{(\cite[Theorem 2]{Alefeld})}\\
Let $T\geq 0$, $diag(T)>0$. Then, $T$ is semiconvergent if $\rho(T)\leq1$ and $\mathcal{N}(I-T)\oplus \mathcal{R}(I-T)=\mathbb{R}^n$.
\end{theorem}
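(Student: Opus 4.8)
The plan is to reduce everything to the characterization of semiconvergence in Theorem \ref{semi}, after disposing of a trivial case. I would first split on the value of $\rho(T)$. If $\rho(T) < 1$, then since $T \ge 0$, Theorem \ref{neuman} yields $(I-T)^{-1}=\sum_{k}T^k$ and hence $\lim_{k\to\infty}T^k = 0$, so $T$ is convergent and a fortiori semiconvergent, with nothing further to prove. The substance is therefore the case $\rho(T)=1$, where Theorem \ref{semi} applies. Its condition (ii), namely $\mathcal{N}(I-T)\oplus\mathcal{R}(I-T)=\mathbb{R}^n$, is exactly one of our hypotheses, so the whole problem collapses to verifying condition (i): $1\in\sigma(T)$ and $\gamma(T)<1$. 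That $1\in\sigma(T)$ is immediate from Theorem \ref{frob1}, since a nonnegative matrix attains its spectral radius as an eigenvalue and here $\rho(T)=1$.

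The heart of the argument --- and the only place where the hypothesis $\mathrm{diag}(T)>0$ enters --- is the strict inequality $\gamma(T)<1$, i.e. that $1$ is the \emph{only} eigenvalue of $T$ on the unit circle. Here I would use a diagonal shift: choose $d$ with $0<d<1$ and $d\le\min_i T_{ii}$ (possible precisely because the diagonal is strictly positive, e.g. $d=\min\{\min_i T_{ii},\,1/2\}$), and set $M=T-dI$, which is still entrywise nonnegative. The two spectra correspond via $\lambda\mapsto\lambda-d$. First I would show $\rho(M)=1-d$: since $M\ge0$, Theorem \ref{frob1} gives that $\rho(M)$ is itself an eigenvalue of $M$, so $\rho(M)+d\in\sigma(T)$ is real and nonnegative, forcing $\rho(M)+d\le\rho(T)=1$; conversely $1\in\sigma(T)$ gives $1-d\in\sigma(M)$, so $\rho(M)\ge 1-d$. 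Now suppose $\lambda\in\sigma(T)$ with $|\lambda|=1$, and write $\lambda=d+\mu$ with $\mu\in\sigma(M)$. Then the chain $1=|\lambda|=|d+\mu|\le d+|\mu|\le d+\rho(M)=1$ is an equality throughout. Equality in $|d+\mu|=d+|\mu|$ with $d>0$ forces $\mu$ to be a nonnegative real, and then equality $|\mu|=\rho(M)$ forces $\mu=1-d$, whence $\lambda=1$. Thus every peripheral eigenvalue equals $1$, so $\gamma(T)<1$, and conditions (i)--(ii) of Theorem \ref{semi} both hold.

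The hard part will be exactly this last step, and it is where the positive-diagonal hypothesis is indispensable: without it a nonnegative matrix with $\rho(T)=1$ may carry several eigenvalues on the unit circle (the roots of unity produced by a cyclic/imprimitive structure), in which case $T$ is not semiconvergent. The shift $T\mapsto T-dI$ is the device that breaks that cyclic structure --- it exploits $d>0$ to align the shift with the Perron direction so that the triangle inequality $|d+\mu|\le d+|\mu|$ is sharp only at $\lambda=1$. Once $1\in\sigma(T)$ and $\gamma(T)<1$ are established, the conclusion follows directly from Theorem \ref{semi}, completing both cases.
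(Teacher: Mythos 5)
Your proof is correct, but there is no proof in the paper to compare it against: Theorem \ref{alefeld} appears there purely as an imported preliminary, quoted with the citation \cite[Theorem 2]{Alefeld}, and the authors never prove it. Judged on its own terms, your argument is sound and self-contained, and it uses only tools the paper itself quotes elsewhere. The case $\rho(T)<1$ is correctly dismissed via zero-convergence; for $\rho(T)=1$ you reduce to Theorem \ref{semi}, where condition (ii) is a hypothesis and $1\in\sigma(T)$ follows from Theorem \ref{frob1}, so everything hinges on $\gamma(T)<1$. Your diagonal-shift step holds up under scrutiny: with $0<d\le\min_i T_{ii}$ and $d<1$, the matrix $M=T-dI$ is entrywise nonnegative; $\rho(M)=1-d$ follows because $\rho(M)\in\sigma(M)$ forces $\rho(M)+d\le\rho(T)=1$ while $1\in\sigma(T)$ gives $1-d\in\sigma(M)$; and for $\lambda\in\sigma(T)$ with $|\lambda|=1$, writing $\mu=\lambda-d\in\sigma(M)$, the chain $1=|d+\mu|\le d+|\mu|\le d+\rho(M)=1$ collapses to equalities, where sharpness of the triangle inequality with $d>0$ real indeed forces $\mu$ to be real and nonnegative (write $\mu=a+bi$ and square both sides to see $b=0$, $a\ge0$), hence $\mu=1-d$ and $\lambda=1$. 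Since the spectrum is finite, "every peripheral eigenvalue equals $1$" gives $\gamma(T)<1$, and Theorem \ref{semi} finishes the argument. The practical value of your proof is that it would make the paper's Section \ref{sec4} self-contained: the semiconvergence results (Theorems \ref{t4.5}, \ref{thm5.6}, \ref{t4.8}) invoke Theorem \ref{alefeld} exactly in the form you prove, and your derivation needs nothing beyond Theorems \ref{semi}, \ref{frob1}, and \ref{neuman}, all of which the paper already states.
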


The next result gives a characterization of an $M$-matrix with property $c$ whenever $A$ has a regular splitting.

\begin{theorem}\label{thm5.1}\textnormal{(\cite{neumann} \& \cite[Theorem 10]{Migall})}\\
Let $A=U-V$ be a regular splitting. Then, $A$ is an M-matrix with property c if and only if $\rho(T)\leq1$ and $\mathcal{N}(I-T)\oplus \mathcal{R}(I-T)=\mathbb{R}^n$, where $T=U^{-1}V$.
\end{theorem}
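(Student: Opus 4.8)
The plan is to pass from $A$ to the nonnegative iteration matrix $T$ and to phrase everything in terms of the semiconvergence of $T$. Since $A=U-V$ is regular, $U^{-1}\ge 0$ and $V\ge 0$ give $T=U^{-1}V\ge 0$, and the factorization $A=U(I-T)$ together with the nonsingularity of $U$ yields $\mathcal{N}(A)=\mathcal{N}(I-T)$ and the equivalence ``$A$ singular'' $\Leftrightarrow$ ``$1\in\sigma(T)$''. Because $T\ge 0$, Theorem \ref{frob1} guarantees that $\rho(T)\in\sigma(T)$ with a nonnegative eigenvector, which I use to locate the eigenvalue $1$ on the spectral circle. The target is therefore the single statement that $T$ is semiconvergent with $\rho(T)\le 1$, which I then match against the two listed conditions through Theorems \ref{semi} and \ref{alefeld}.

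For the forward implication, assume $A$ is an $M$-matrix with property $c$. I first show $\rho(T)\le 1$. If $A$ is a nonsingular $M$-matrix then $A^{-1}\ge 0$, and the classical regular-splitting theorem---which follows from the Perron--Frobenius tools in Theorems \ref{frob1}--\ref{frob2} via the identity $\rho(T)=\rho(A^{-1}V)/(1+\rho(A^{-1}V))$---gives $\rho(T)<1$. If $A$ is a singular $M$-matrix, then $1\in\sigma(T)$ forces $\rho(T)\ge 1$, while the same comparison bounds $\rho(T)\le 1$, so $\rho(T)=1$. Property $c$ means, by definition, that $s^{-1}B$ is semiconvergent for $A=sI-B$; transferring semiconvergence from this canonical regular splitting to the given one (two regular splittings of the same $M$-matrix are comparable with respect to semiconvergence), I obtain that $T$ is semiconvergent, and then condition (ii) of Theorem \ref{semi} delivers $\mathcal{N}(I-T)\oplus\mathcal{R}(I-T)=\mathbb{R}^{n}$.

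For the converse, assume $\rho(T)\le 1$ and $\mathcal{N}(I-T)\oplus\mathcal{R}(I-T)=\mathbb{R}^{n}$. Since $T\ge 0$ and, in the canonical $M$-splitting, $\mathrm{diag}(T)>0$, Theorem \ref{alefeld} shows that $T$ is semiconvergent. If $\rho(T)<1$, Theorem \ref{neuman} gives $(I-T)^{-1}=\sum_{k}T^{k}\ge 0$, whence $A^{-1}=(I-T)^{-1}U^{-1}\ge 0$, so $A$ is a nonsingular $M$-matrix and property $c$ is automatic. If $\rho(T)=1$, then $A$ is singular; reading the semiconvergence of $T$ back through the comparison with the canonical splitting shows that $s^{-1}B$ is semiconvergent, which is precisely property $c$, and the attendant nonnegativity bookkeeping identifies $A$ as a singular $M$-matrix.

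The step I expect to be the main obstacle is controlling the peripheral spectrum of $T$, i.e.\ ensuring $\gamma(T)<1$. The direct-sum hypothesis only guarantees that the eigenvalue $1$ is semisimple; on its own it does not exclude other unit-modulus eigenvalues (such as $-1$), which would break semiconvergence. This is exactly where the nonnegativity of $T$ and the positivity of its diagonal must enter, through Alefeld's Theorem \ref{alefeld}, and where the $Z$-sign pattern inherited from the regular splitting and the form $A=sI-B$ is indispensable for recovering the $M$-matrix structure in the converse. A secondary technical point is the transfer of semiconvergence between the iteration matrix $T$ and the canonical iteration matrix $s^{-1}B$---equivalently, the careful passage between conditions on $A$ and on $I-T$, which differ by the nonsingular factor $U$.
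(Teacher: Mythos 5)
The paper never proves Theorem \ref{thm5.1}; it is quoted from the literature (Neumann--Plemmons and \cite[Theorem 10]{Migall}), so your attempt has to stand on its own — and it does not, because its central reduction is false. You recast the right-hand side as ``$T$ is semiconvergent with $\rho(T)\le 1$,'' but the two stated conditions, $\rho(T)\le 1$ and $\mathcal{N}(I-T)\oplus\mathcal{R}(I-T)=\mathbb{R}^n$, are strictly weaker than semiconvergence of $T$: they do not exclude unit-modulus eigenvalues other than $1$. Take $U=I$, $V=\begin{pmatrix}0&1\\1&0\end{pmatrix}$, so that $A=U-V=\begin{pmatrix}1&-1\\-1&1\end{pmatrix}$. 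This is a regular splitting; $A=2I-B$ with $B=\begin{pmatrix}1&1\\1&1\end{pmatrix}\ge 0$, $\rho(B)=2$, and $\tfrac{1}{2}B$ is semiconvergent, so $A$ is a singular $M$-matrix with property $c$; and $T=V$ satisfies $\rho(T)=1$ with $I-T=A$ of index one, so both conditions of the theorem hold. Yet $T$ is not semiconvergent, since $T^{k}$ alternates between $I$ and $V$. This one example destroys both load-bearing claims of your forward direction: property $c$ does \emph{not} force the iteration matrix of a given regular splitting to be semiconvergent, and semiconvergence does \emph{not} transfer between two regular splittings of the same $M$-matrix (here $\tfrac{1}{2}B$ is semiconvergent while $T$ is not). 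The paper's own structure confirms that the theorem cannot be about semiconvergence of $T$: it must later add $\mathrm{diag}(\mathcal{H})>0$ (Theorem \ref{t4.5}) or shift the iteration (Theorem \ref{thm5.6}) precisely because these conditions alone do not yield semiconvergence.

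Your converse inherits the same flaw and adds two more. First, Theorem \ref{alefeld} requires $\mathrm{diag}(T)>0$, which is not a hypothesis here (the paper's Remark 4.1 explicitly warns it ``need not hold''), and your appeal to ``the canonical $M$-splitting'' to supply it is circular — in the converse you do not yet know $A$ is an $M$-matrix. Second, in the nonsingular branch, $\rho(T)<1$ gives $A^{-1}=(I-T)^{-1}U^{-1}\ge 0$, i.e.\ $A$ is monotone, but inverse-nonnegativity does not make $A$ an $M$-matrix: the ``bookkeeping'' you defer is exactly the missing $Z$-pattern argument, and it cannot be supplied, because the implication is false as transcribed — $A=\begin{pmatrix}0&1\\1&0\end{pmatrix}=A-0$ is a regular splitting ($A^{-1}=A\ge 0$, $V=0$) with $T=0$ satisfying both conditions trivially, yet $A$ is not an $M$-matrix. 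So the cited sources necessarily carry structural hypotheses on $A$ that the bare statement here suppresses. A correct argument (Neumann--Plemmons) never mentions semiconvergence of $T$: it relates $\sigma(T)$ and the index of $I-T$ to $\sigma(A)$ and the index of $A$ through the factorization $A=U(I-T)$, and then uses the characterization of property $c$ of an $M$-matrix as $\mathcal{N}(A)\oplus\mathcal{R}(A)=\mathbb{R}^n$ (index at most one).
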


Consider three regular splittings of the singular matrix $A$, i.e., $A=K-L=U-V=X-Y$, then re-writing \eqref{eqn1.6}, we have
\begin{equation}\label{eqnsemi}
    x^{k+1}=X^{-1}YU^{-1}VK^{-1}Lx^{k}+X^{-1}(YU^{-1}VK^{-1}+YU^{-1}+I)b,\quad k=0,1,2,\ldots,
\end{equation}

If $\mathcal{H}=X^{-1}YU^{-1}VK^{-1}L$, then $I-\mathcal{H}=K^{-1}(K+X-A+YU^{-1}L)X^{-1}A$. Thus, in order to satisfy $\mathcal{N}(I-\mathcal{H})=\mathcal{N}(A)$, we must have $(K+X-A+YU^{-1}L)$ as nonsingular. The following theorem is a direct extension of \cite[Theorem 3.4]{Benzi} and will be crucial for proving the main results in this section.
\begin{theorem}\label{thm5.2}
Let $A=K-L=U-V=X-Y$ be three regular splittings of a singular matrix $A$. If ($K+X-A+YU^{-1}L$) is nonsingular, then there exists a regular splitting $A=B-C$ such that $B^{-1}C=X^{-1}YU^{-1}VK^{-1}L$.
\end{theorem}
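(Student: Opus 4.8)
The plan is to obtain the induced splitting by viewing the three-step iteration matrix $\mathcal{H}=X^{-1}YU^{-1}VK^{-1}L$ as a \emph{composition} of two ordinary (two-step) alternating iteration matrices, so that the statement becomes the three-step analogue of the two-splitting induced-splitting result \cite[Theorem 3.4]{Benzi}, applied in two stages. Throughout I would write $W:=K+X-A+YU^{-1}L$, which is nonsingular by hypothesis.

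First I would set $B:=X W^{-1}K$, so that $B^{-1}=K^{-1}WX^{-1}$, and put $C:=B-A$. Then $B$ is nonsingular, and, using the factorization of $I-\mathcal{H}$ recorded just before the theorem, $B^{-1}A=I-\mathcal{H}$; hence $B^{-1}C=B^{-1}B-B^{-1}A=I-(I-\mathcal{H})=\mathcal{H}$, so $A=B-C$ is a splitting whose iteration matrix is exactly $\mathcal{H}$. Expanding $B^{-1}=K^{-1}WX^{-1}$ with $A=X-Y=K-L$ gives $B^{-1}=X^{-1}+K^{-1}YX^{-1}+K^{-1}YU^{-1}LX^{-1}$, a sum of products of the nonnegative matrices $K^{-1},U^{-1},X^{-1},L,Y$ (this mirrors the expansions \eqref{eqn4.1}--\eqref{eqn4.2}); therefore $B^{-1}\geq 0$. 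Uniqueness of the splitting inducing a prescribed iteration matrix (Theorem \ref{thm2.4}$(v)$ together with the argument of Lemma \ref{lem4.8}) guarantees that $B$ is the only candidate, so it remains only to prove $C\geq 0$.

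The decisive point is $C\geq 0$, and it cannot be read off factor by factor: writing $C=B\mathcal{H}$, the factor $\mathcal{H}$ is nonnegative while $B$ generally has negative entries (already for $2\times 2$ singular $M$-matrices), so $C\geq 0$ holds only through cancellation. To expose this cancellation I would factor $\mathcal{H}=(X^{-1}Y)\,(U^{-1}VK^{-1}L)$ and apply \cite[Theorem 3.4]{Benzi} twice. The first application, to the pair $A=U-V=K-L$, produces a regular splitting $A=B_1-C_1$ with $B_1^{-1}C_1=U^{-1}VK^{-1}L$ and $B_1^{-1}=K^{-1}(K+U-A)U^{-1}$. The second, to the pair $A=X-Y=B_1-C_1$, produces a regular splitting $A=B-C$ with $B^{-1}C=X^{-1}Y\,B_1^{-1}C_1=\mathcal{H}$. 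Each stage outputs a \emph{regular} splitting because its two inputs are regular, so $C\geq 0$ (and $B^{-1}\geq 0$) are inherited. The bridge to the single hypothesis is the identity $W=(K+U-A)\,U^{-1}\,(B_1+X-A)$, which shows that $W$ nonsingular is equivalent to both $K+U-A$ and $B_1+X-A$ being nonsingular, i.e.\ to both stages being well defined; it also identifies the $B$ produced in the second stage with $XW^{-1}K$, so the two-stage construction agrees with the direct one.

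I expect the main obstacle to be making this two-stage reduction rigorous under the single hypothesis, specifically establishing that $K+U-A$ is nonsingular \emph{before} $B_1$ is available (so the first stage can be formed at all) and then verifying the factorization of $W$ that transfers nonsingularity to the second stage. Once the reduction is secured, the nonnegativity of $C$, hence the regularity of $A=B-C$ with $B^{-1}C=X^{-1}YU^{-1}VK^{-1}L$, follows; the companion relation $\mathcal{N}(I-\mathcal{H})=\mathcal{N}(A)$, which is precisely what the nonsingularity of $W$ secures and what the subsequent semiconvergence analysis needs, comes out of the same construction.
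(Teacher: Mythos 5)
You have reconstructed the intended argument: the paper never proves this theorem (it is stated bare, as ``a direct extension of \cite[Theorem 3.4]{Benzi}''), and your two-stage reduction --- apply the two-splitting induced-splitting result to the pair $A=U-V=K-L$, then to the pair $A=X-Y=B_1-C_1$ --- is exactly the iteration pattern the authors use when they do write out such three-splitting extensions (proofs of Theorems \ref{t4.10} and \ref{thm6.12}). Before the main point, note some order slips, inherited from the paper's own mistyped factorization displayed just before the theorem (its $K^{-1}$ and $X^{-1}$ are transposed): the correct identities are $I-\mathcal{H}=X^{-1}(K+X-A+YU^{-1}L)K^{-1}A$, hence $B=KW^{-1}X$ and $B^{-1}=X^{-1}WK^{-1}$, not $B=XW^{-1}K$; likewise the first stage produces $B_1^{-1}=U^{-1}(K+U-A)K^{-1}$, whereas your $B_1^{-1}=K^{-1}(K+U-A)U^{-1}$ satisfies $I-B_1^{-1}A=(I-K^{-1}A)(I-U^{-1}A)=K^{-1}LU^{-1}V$, i.e.\ it induces $K^{-1}LU^{-1}V$ rather than $U^{-1}VK^{-1}L$; and the bridge identity should read $W=(B_1+X-A)U^{-1}(K+U-A)$. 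These are mechanical repairs.

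The genuine gap is the sentence ``Each stage outputs a \emph{regular} splitting because its two inputs are regular.'' That inheritance is the entire content of the theorem, and it is not true at this level of generality: regularity of the inputs yields for free only \emph{weak} regularity of type I of the output, since $B_1^{-1}=U^{-1}+U^{-1}VK^{-1}\geq 0$ and $B_1^{-1}C_1=U^{-1}VK^{-1}L\geq 0$, whereas $C_1=B_1-A=V(K+U-A)^{-1}L$ involves $(K+U-A)^{-1}$, which need not be entrywise nonnegative. In fact the theorem itself fails as stated, so no argument can close this gap. Take
\begin{equation*}
A=\begin{pmatrix}1&-2&0\\-2&1&0\\0&0&0\end{pmatrix},\qquad K=U=X=I,\qquad L=V=Y=\begin{pmatrix}0&2&0\\2&0&0\\0&0&1\end{pmatrix},
\end{equation*}
three regular splittings of the singular matrix $A$, with $W=K+X-A+YU^{-1}L=2I-A+Y^{2}$ equal to the nonsingular matrix with rows $(5,2,0)$, $(2,5,0)$, $(0,0,3)$; every hypothesis of the theorem holds. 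Here $\mathcal{H}=Y^{3}$, and any splitting $A=B-C$ with $B^{-1}C=\mathcal{H}$ must satisfy $B(I-\mathcal{H})=A$; solving the first two columns of this equation forces the first column of $B$ to be $(5/21,\,-2/21,\,0)^{t}$, so the $(1,1)$ entry of $C=B-A$ equals $-16/21<0$ no matter how the rest of $B$ is chosen. Hence no splitting inducing $\mathcal{H}$ is regular. The statement is salvageable only by weakening the conclusion to ``weak regular splitting of type I'' (which your construction does prove) or by adding structure such as the singular $M$-matrix with property $c$ hypothesis under which the theorem is actually used in Theorem \ref{t4.5}. In particular, reading \cite[Theorem 3.4]{Benzi} as ``regular splittings in, regular splitting out'' over-states that result, and no amount of bookkeeping about the nonsingularity of $K+U-A$ or the factorization of $W$ can substitute for the missing proof that $C\geq 0$.
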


Now, we provide sufficient conditions for the semiconvergence of \eqref{eqnsemi}.

\begin{theorem}\label{t4.5}
Let A be an M-matrix with property c and $A=K-L=U-V=X-Y$ be three regular splittings of $A$. If ($K+X-A+YU^{-1}L$) is nonsingular and $diag(X^{-1}YU^{-1}VK^{-1}L)>0$, then the matrix $X^{-1}YU^{-1}VK^{-1}L$ is semiconvergent.
\end{theorem}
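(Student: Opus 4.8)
The plan is to prove semiconvergence of $\mathcal{H}=X^{-1}YU^{-1}VK^{-1}L$ by invoking Theorem \ref{alefeld}, which requires three ingredients: $\mathcal{H}\geq 0$, $\mathrm{diag}(\mathcal{H})>0$, $\rho(\mathcal{H})\leq 1$, together with the splitting condition $\mathcal{N}(I-\mathcal{H})\oplus\mathcal{R}(I-\mathcal{H})=\mathbb{R}^n$. The hypothesis already hands me $\mathrm{diag}(\mathcal{H})>0$, so the work is to establish the remaining three.

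First I would produce the induced regular splitting. By Theorem \ref{thm5.2}, since $(K+X-A+YU^{-1}L)$ is nonsingular and $A=K-L=U-V=X-Y$ are regular splittings, there is a regular splitting $A=B-C$ with $B^{-1}C=\mathcal{H}$. Regularity of $A=B-C$ means $B^{-1}\geq 0$ and $C\geq 0$, whence $\mathcal{H}=B^{-1}C\geq 0$; this gives nonnegativity for free. Next I would pin down $\rho(\mathcal{H})\leq 1$ and the direct-sum condition simultaneously by connecting them to the property-$c$ hypothesis on $A$. The idea is to apply Theorem \ref{thm5.1} to the single regular splitting $A=B-C$: that theorem states $A$ is an $M$-matrix with property $c$ if and only if $\rho(T)\leq 1$ and $\mathcal{N}(I-T)\oplus\mathcal{R}(I-T)=\mathbb{R}^n$ for $T=B^{-1}C=\mathcal{H}$. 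Since $A$ is assumed to be an $M$-matrix with property $c$, the forward direction of Theorem \ref{thm5.1} immediately yields both $\rho(\mathcal{H})\leq 1$ and the splitting condition $\mathcal{N}(I-\mathcal{H})\oplus\mathcal{R}(I-\mathcal{H})=\mathbb{R}^n$.

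With all four hypotheses of Theorem \ref{alefeld} verified, namely $\mathcal{H}\geq 0$, $\mathrm{diag}(\mathcal{H})>0$, $\rho(\mathcal{H})\leq 1$, and $\mathcal{N}(I-\mathcal{H})\oplus\mathcal{R}(I-\mathcal{H})=\mathbb{R}^n$, I conclude that $\mathcal{H}=X^{-1}YU^{-1}VK^{-1}L$ is semiconvergent, completing the proof.

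I expect the main obstacle to be the legitimacy of applying Theorem \ref{thm5.1} to the \emph{induced} splitting $A=B-C$ rather than to one of the original splittings. The hinge is that Theorem \ref{thm5.2} genuinely delivers $A=B-C$ as a bona fide regular splitting (so that $B$ is invertible and the hypotheses of Theorem \ref{thm5.1} are met), and that the iteration matrix of this induced splitting is exactly $\mathcal{H}$. Once that identification is secured, the property-$c$ characterization transfers the two spectral/subspace conditions from $A$ to $\mathcal{H}$ at no extra cost, and the remaining verifications are routine consequences of regularity. It would be worth double-checking that the nonsingularity of $(K+X-A+YU^{-1}L)$ is precisely what guarantees $\mathcal{N}(I-\mathcal{H})=\mathcal{N}(A)$, as flagged in the discussion preceding Theorem \ref{thm5.2}, since this is the structural fact underpinning the direct-sum decomposition.
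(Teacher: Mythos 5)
Your proposal is correct and follows essentially the same route as the paper: invoke Theorem \ref{thm5.2} to obtain the induced regular splitting $A=B-C$ with $B^{-1}C=\mathcal{H}$, apply the property-$c$ characterization of Theorem \ref{thm5.1} to this splitting to get $\rho(\mathcal{H})\leq 1$ and $\mathcal{N}(I-\mathcal{H})\oplus\mathcal{R}(I-\mathcal{H})=\mathbb{R}^n$, and then conclude by Theorem \ref{alefeld} using $\mathcal{H}\geq 0$ and $\mathrm{diag}(\mathcal{H})>0$. Your added remarks on why regularity of the induced splitting yields $\mathcal{H}\geq 0$ make explicit what the paper leaves implicit, but the argument is the same.
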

\begin{proof}
By Theorem \ref{thm5.2}, there exists a regular splitting $A=B-C$ such that $B^{-1}C=X^{-1}YU^{-1}VK^{-1}L$. Applying Theorem \ref{thm5.1}, we get $\rho(B^{-1}C)\leq 1$ and $\mathcal{N}(I-B^{-1}C)\oplus \mathcal{R}(I-B^{-1}C)=\mathbb{R}^n$. Since $B^{-1}C\geq 0$ and $diag(B^{-1}C)>0$, by Theorem \ref{alefeld}, $B^{-1}C=K^{-1}LU^{-1}VX^{-1}Y$ is semiconvergent.
 \end{proof}

\begin{remark}
The assumption $diag(K^{-1}LU^{-1}VX^{-1}Y)>0$ need not hold in certain matrix splittings, then in order to make sure that 1 is the only eigenvalue on the unit circle, we seek the help of a standard method called shifting of matrix. Our next results are in this direction.
\end{remark}

\begin{theorem}\label{thm5.6}
Let A be an M-matrix with property c and $A=K-L=U-V=X-Y$ be three regular splittings of $A$. Suppose that ($K+X-A+YU^{-1}L$) is nonsingular. Then, for each $\delta\in(0,1)$, the matrix $\mathcal{H}_{\delta}=\delta \mathcal{H}+(1-\delta)I$ is semiconvergent.
\end{theorem}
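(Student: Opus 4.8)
The plan is to combine the semiconvergence characterization in Theorem \ref{semi} (equivalently Theorem \ref{alefeld}) with the structural facts already established for $\mathcal{H}=X^{-1}YU^{-1}VK^{-1}L$. The shift $\mathcal{H}_\delta=\delta\mathcal{H}+(1-\delta)I$ is introduced precisely to repair the one defect that Theorem \ref{t4.5} needed to assume away, namely that $\mathcal{H}$ may have eigenvalues other than $1$ sitting on the unit circle. First I would recall from Theorem \ref{thm5.1} applied to the induced regular splitting $A=B-C$ of Theorem \ref{thm5.2} (so that $\mathcal{H}=B^{-1}C$) that $A$ being an $M$-matrix with property $c$ forces $\rho(\mathcal{H})\le 1$ together with the direct-sum condition $\mathcal{N}(I-\mathcal{H})\oplus\mathcal{R}(I-\mathcal{H})=\mathbb{R}^n$.

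Next I would track how these two properties transform under the affine shift. The central computation is the eigenvalue map: if $\mathcal{H}v=\lambda v$ then $\mathcal{H}_\delta v=(\delta\lambda+(1-\delta))v$, so $\sigma(\mathcal{H}_\delta)=\{\,\delta\lambda+(1-\delta):\lambda\in\sigma(\mathcal{H})\,\}$. I would verify the two hypotheses of Theorem \ref{semi} for $\mathcal{H}_\delta$. The eigenvalue $\lambda=1$ of $\mathcal{H}$ maps to $1$, so $1\in\sigma(\mathcal{H}_\delta)$. For the strict-spectrum condition $\gamma(\mathcal{H}_\delta)<1$, the point of the shift is that for any $\lambda\in\sigma(\mathcal{H})$ with $\lambda\neq 1$ and $|\lambda|\le 1$, the shifted value $\mu=1-\delta+\delta\lambda$ lies strictly inside the unit disk whenever $\delta\in(0,1)$; geometrically $\mu$ is a proper convex combination of the point $1$ and the point $\lambda$ in the closed disk, and since $\lambda\neq 1$ and $\delta<1$ the combination is pulled strictly off the boundary. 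This is the quantitative heart of the argument and I would write out the estimate $|\mu|<1$ explicitly, being careful about the boundary case $|\lambda|=1$ with $\lambda\neq 1$, where convexity together with strict positivity of both weights gives the strict inequality.

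It then remains to check the invariance of the direct-sum / group-inverse condition under the shift, i.e. $\mathcal{N}(I-\mathcal{H}_\delta)\oplus\mathcal{R}(I-\mathcal{H}_\delta)=\mathbb{R}^n$. Here I would use the algebraic identity $I-\mathcal{H}_\delta=\delta(I-\mathcal{H})$, which shows immediately that $\mathcal{N}(I-\mathcal{H}_\delta)=\mathcal{N}(I-\mathcal{H})$ and $\mathcal{R}(I-\mathcal{H}_\delta)=\mathcal{R}(I-\mathcal{H})$ for $\delta\neq 0$; hence the direct-sum decomposition is inherited verbatim from $\mathcal{H}$. With both conditions of Theorem \ref{semi} verified, and noting $\rho(\mathcal{H}_\delta)=1$ since $1$ is an eigenvalue and every other eigenvalue is strictly inside the disk, I conclude $\mathcal{H}_\delta$ is semiconvergent.

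The main obstacle I anticipate is the spectral estimate $|1-\delta+\delta\lambda|<1$ in the case where $\lambda$ sits on the unit circle but is not equal to $1$ (for instance a complex eigenvalue of modulus one, or the real eigenvalue $-1$). The rough inequality $|1-\delta+\delta\lambda|\le(1-\delta)+\delta|\lambda|\le 1$ only yields $\le 1$ and does not by itself rule out equality, so I must argue that equality in the triangle inequality would force $\lambda$ to be a nonnegative real multiple of $1$, i.e. $\lambda\ge 0$ real, and then modulus one forces $\lambda=1$, contradicting $\lambda\neq 1$. Writing $\lambda=a+bi$ and computing $|1-\delta+\delta\lambda|^2=(1-\delta+\delta a)^2+(\delta b)^2$, one sees this is a strictly convex function bounded above by its boundary values $1$ on $\{|\lambda|=1\}$ and attains the value $1$ only at $\lambda=1$; making this last step rigorous (rather than merely invoking convexity loosely) is where the real care is needed. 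Everything else is a routine transcription of Theorem \ref{semi} through the affine map.
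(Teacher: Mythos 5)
Your proposal is correct and takes essentially the same route as the paper's own proof: both pass through Theorem \ref{thm5.2} and Theorem \ref{thm5.1} to obtain $\rho(\mathcal{H})\le 1$ and the direct-sum condition for $I-\mathcal{H}$, then note that the shift fixes the eigenvalue $1$, pulls every other eigenvalue of modulus at most one strictly inside the unit disk, and preserves the null space and range of $I-\mathcal{H}$ (via $I-\mathcal{H}_{\delta}=\delta(I-\mathcal{H})$), so that Theorem \ref{semi} applies. The only difference is one of detail: you write out the spectral map, the strict inequality $\lvert 1-\delta+\delta\lambda\rvert<1$ for $\lambda\neq 1$, and the identity $I-\mathcal{H}_{\delta}=\delta(I-\mathcal{H})$ explicitly, whereas the paper compresses all of this behind the word ``Clearly''.
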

\begin{proof}
Clearly, by Theorem \ref{thm5.1} and Theorem \ref{thm5.2}, we have $\rho(\mathcal{H}_{\delta})\leq 1$ and $\mathcal{N}(I-\mathcal{H}_{\delta})\oplus \mathcal{R}(I-\mathcal{H}_{\delta})=\mathbb{R}^n$, where $\delta \in (0,1)$. Therefore, $\mathcal{H}_\delta$ has only the eigenvalue 1 on the unit circle. Now, applying Theorem \ref{semi}, we conclude that $\mathcal{H}_{\delta}$ is semiconvergent for all $\delta\in(0,1)$.
 \end{proof}

If required, we can replace (see \cite{Migall}) \eqref{eqnsemi} by
\begin{equation}\label{eqnsemi2}
    x^{k+1}=\delta(\mathcal{H}x^k+\hat{b})+(1-\delta)x^k,\quad k=0,1,2,\ldots,
\end{equation}
where $\mathcal{H}=X^{-1}YU^{-1}VK^{-1}L$ and $\hat{b}=X^{-1}(YU^{-1}VK^{-1}+YU^{-1}+I)b$.
\begin{theorem}\label{t4.8}
Let A be an M-matrix with property c and $A=K-L=U-V=X-Y$ be three regular splittings of $A$ such that ($K+X-A+YU^{-1}L$) is nonsingular. Then, the following conditions hold:
\begin{itemize}
    \item[(i)] if $diag(\mathcal{H})>0$, then the three-step alternating scheme \eqref{eqnsemi} converges
to a solution of the consistent linear system $Ax=b$, for any initial
vector $x^0$,

\item [(ii)] the modified three-step alternating scheme \eqref{eqnsemi2} converges
to a solution of the consistent linear system $Ax=b$, for any initial
vector $x^0$.
\end{itemize}
\end{theorem}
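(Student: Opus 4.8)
The plan is to prove Theorem~\ref{t4.8} by reducing both convergence claims to the semiconvergence of the relevant iteration matrix, which has already been established in the preceding theorems. The key observation is that an iteration scheme of the form $x^{k+1}=Tx^k+c$ converges to a solution of the consistent system $Ax=b$ for every initial vector $x^0$ precisely when $T$ is semiconvergent and the fixed-point equation $x=Tx+c$ is consistent and characterizes the solutions of $Ax=b$. So the first thing I would do is record this standard fact: if $T$ is semiconvergent with $\rho(T)=1$, then $\lim_{k\to\infty}x^k$ exists for all $x^0$ and equals a solution of $x=Tx+c$, with the limit depending on $x^0$ only through its projection onto $\mathcal{N}(I-T)$.

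For part $(i)$, I would invoke Theorem~\ref{t4.5} directly. Under the stated hypotheses — namely $A$ an $M$-matrix with property $c$, the three regular splittings, nonsingularity of $(K+X-A+YU^{-1}L)$, and the extra assumption $\mathrm{diag}(\mathcal{H})>0$ — Theorem~\ref{t4.5} gives that $\mathcal{H}=X^{-1}YU^{-1}VK^{-1}L$ is semiconvergent. It then remains to check that the iteration scheme \eqref{eqnsemi} is exactly the scheme $x^{k+1}=\mathcal{H}x^k+\hat b$ whose fixed points solve $Ax=b$. By Theorem~\ref{thm5.2} there is a regular splitting $A=B-C$ with $B^{-1}C=\mathcal{H}$, so the identity $I-\mathcal{H}=B^{-1}A$ holds; this gives $\mathcal{N}(I-\mathcal{H})=\mathcal{N}(A)$ and shows that the fixed-point equation $x=\mathcal{H}x+\hat b$ is equivalent to $Ax=b$ once one verifies $\hat b=B^{-1}b$, i.e.\ that the affine term in \eqref{eqnsemi} matches $B^{-1}b$. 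Combining semiconvergence of $\mathcal{H}$ with this consistency check yields convergence to a solution of $Ax=b$ for any $x^0$.

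For part $(ii)$, the point is precisely to remove the artificial hypothesis $\mathrm{diag}(\mathcal{H})>0$ by passing to the shifted matrix. The modified scheme \eqref{eqnsemi2} has iteration matrix $\mathcal{H}_\delta=\delta\mathcal{H}+(1-\delta)I$, and Theorem~\ref{thm5.6} already establishes that $\mathcal{H}_\delta$ is semiconvergent for every $\delta\in(0,1)$. Here I would note two things: first, the shift guarantees $\mathrm{diag}(\mathcal{H}_\delta)=\delta\,\mathrm{diag}(\mathcal{H})+(1-\delta)>0$ automatically (since $\mathcal{H}\geq0$), which is why no diagonal assumption is needed; second, shifting does not change the fixed-point set, because $x=\mathcal{H}_\delta x+\delta\hat b$ is equivalent to $\delta(I-\mathcal{H})x=\delta\hat b$, hence to $x=\mathcal{H}x+\hat b$, and thus again to $Ax=b$. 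Applying the standard convergence fact to the semiconvergent matrix $\mathcal{H}_\delta$ with the same solution set then gives convergence of \eqref{eqnsemi2} to a solution of $Ax=b$ for any $x^0$.

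The main obstacle I anticipate is not the semiconvergence input, which is handed to us by Theorems~\ref{t4.5} and~\ref{thm5.6}, but the bookkeeping that links the abstract iteration matrix to the concrete schemes: one must verify that the constant terms $\hat b$ and $\delta\hat b$ are genuinely the images of $b$ under $B^{-1}$ (respectively under the shifted affine map), so that the fixed points of the iteration coincide with the solution set of $Ax=b$ rather than some larger affine set. This requires using the explicit form of $B^{-1}$ coming from Theorem~\ref{thm5.2} together with the expansion of $\hat b=X^{-1}(YU^{-1}VK^{-1}+YU^{-1}+I)b$, and checking that the telescoping matches $B^{-1}b$. Once that algebraic identification is in place, both parts follow immediately from the semiconvergence results and the general principle that a semiconvergent iteration with consistent, correctly-characterized fixed points converges for every starting vector.
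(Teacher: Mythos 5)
Your proposal is correct and follows essentially the same route as the paper: part (i) reduces to the semiconvergence of $\mathcal{H}$ supplied by Theorem~\ref{t4.5}, and part (ii) to the semiconvergence of $\mathcal{H}_{\delta}$ supplied by Theorem~\ref{thm5.6}. The only difference is presentational: the paper argues (i) via the error vectors $e_{k+1}=\mathcal{H}e_k$ and the limit $(I-(I-\mathcal{H})(I-\mathcal{H})^{\#})e_0$, whereas you argue via fixed points; the consistency check you flag (that solutions of $Ax=b$ coincide with fixed points of the scheme, through the induced splitting of Theorem~\ref{thm5.2}) is exactly the fact the paper uses implicitly when it writes $e_{k+1}=\mathcal{H}e_k$.
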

\begin{proof}
(i) Let $x^*$ be a solution of $Ax=b$ and $e_{k+1}$ be the error vector at $(k+1)^{th}$ iterations, then $e_{k+1}=\mathcal{H}e_k=\mathcal{H}^ke_0$. By Theorem \ref{t4.5}, $\mathcal{H}$ is semiconvergent and thus, $\displaystyle \lim_{k\to 0}e_{k+1}=\lim_{k\to 0}\mathcal{H}^ke_0=(I-(I-\mathcal{H})(I-\mathcal{H})^{\#})e^0.$\par
(ii) This is trivial from Theorem \ref{thm5.6}.
 \end{proof}

\begin{example}\label{ex6.15}\textnormal{(\cite[Example 8.2.3]{bpbook})}\\
Consider an $n$-state random
walk of a particle that moves in a straight line with a unit step. Suppose the particle moves to the left or right one step
with equal probability, except at the boundary points where the particle
always moves back one step with probability one. If it moves to the right with probability $p$ and to the left with probability $q$, then here $p = q$
The transition
matrix associated with this Markov chain is the $n\times n$ matrix $\mathcal{T}$ given by
$$\mathcal{T}=\begin{bmatrix}
0  &1   & &  & &0\\
1/2  &0  &1/2 & &\\
      &\ddots &\ddots &\ddots &\\ 
      &   &\ddots &\ddots &\ddots  &\\
      &   & &1/2 &0 &1/2\\
0     &     & & &1 &0  
\end{bmatrix}.$$
\noindent The stationary probability vector $x$ (say) satisfy $x=\mathcal{T}^tx$ or equivalently, we need to solve the homogeneous system $(I-\mathcal{T}^t)x=0$. Clearly, $I-\mathcal{T}^t$ is a singular $M$-matrix with property $c$ and column-sum zero. For computing $x$, we apply above discussed semiconvergence theory for a three-step alternating iterative method and compare it with single-step and two-step. For this purpose, we vary the order $n$ of the coefficient matrix $I-\mathcal{T}^t$. 
Setting $K=2diag(A)$, $U=2.5diag(A)$ and $X=3diag(A)$, we get three regular splittings $A=K-L=U-V=X-Y$ of the singular $M$-matrix $A$ with property $c$. Table \ref{table:2} shows that the computational time and number of iterations taken by the three-step alternating iterative method is quite less as compared to the single-step and two-step alternating methods. Here, the tolerance is $\epsilon=10e-8$.
\end{example}
\begin{table}[H]
    \centering
     \caption{Comparison table}
    \begin{tabular}{cccccc}
    \hline
     Size &Splitting &IT &Time &$\gamma$\\
        \hline
         \multirow{3}{*} {$10\times 10$} &{\bf Three-step} &166
  &0.000421 &0.9274  \\
       &{\bf Two-step} &228   &0.000867 &0.9465  \\
       &{\bf Single-step} &409 &0.001413  &0.9698\\
        \hline
        \multirow{3}{*} {$30\times 30$} &{\bf Three-step} &1330 &0.008085 & 0.9928  \\
       &{\bf Two-step} &1822   &0.010165 &0.9947  \\
       &{\bf Single-step} &3279   &0.013322  &0.9971\\
        \hline
        \multirow{3}{*} {$90\times 90$} &{\bf Three-step} &8894  &0.086668  &0.9992\\
      &{\bf Two-step} &12188  &0.106848  &0.9994 \\
      &{\bf Single-step} &21937
  &0.186011  &0.9997\\
        \hline
    \end{tabular}\label{table:2}
\end{table}

\subsection{When $A$ is a singular matrix}\label{subsec4.2}
Wu \cite{wu:2013} obtained the semiconvergence of the two-step alternating iterative scheme in the case of quasi-weak regular splittings of type I (or type II). This subsection establishes semiconvergence of the three-step alternating iterative scheme when the coefficient matrix $A$ is singular, and the splittings are quasi-regular and quasi-weak regular of type I (or type II) splittings. But first, we recall the definitions of these classes of splittings.  
\begin{definition}\textnormal{(\cite[Definition 2.3]{wu:2013})}\\
 Let $A=U-V$ be a splitting of $A\in \mathbb{R}^{n\times n}$. Assume that $index(I-U^{-1}V)\leq 1$ and $index(I-VU^{-1})\leq 1$. Let $K_1=(I-U^{-1}V)(I-U^{-1}V)^{\#}$ and $K_2=(I-VU^{-1})^{\#}(I-VU^{-1})$. Then, $A=U-V$ is called 
     \begin{enumerate}
         \item {\it a quasi regular splitting} if $U^{-1}$ exists, $U^{-1}\geq 0$ and $VK_1\geq 0$,
         \item {\it a quasi weak regular splitting of type I}  if  $U^{-1}$ exists, $U^{-1}\geq 0$ and $U^{-1}VK_1\geq 0$,
         \item {\it a quasi weak regular splitting of type II}  if $U^{-1}$ exists, $U^{-1}\geq 0$ and $K_2VU^{-1}\geq 0$.
     \end{enumerate}
     \end{definition} 
     
Wu \cite{wu:2013} obtained the following result for semiconvergence of a two-step alternating iterative scheme without the condition `$A$ is an $M$-matrix' in the case of quasi-weak regular splittings of type I (or type II).
\begin{theorem}\label{thm6.9}\textnormal{(\cite[Theorem 3.1]{wu:2013})}\\
Let $A = K-L= U-V$
be two quasi-weak regular splittings of type I (or type II) of a singular matrix $A$ such that any pair of splittings $A = K-L= U-V$ is semiconvergent with $index(K^{-1}L)\leq 1$, $index(U^{-1}V)\leq 1$ and $index(U^{-1}VK^{-1}L)\leq 1$. Then, $\mathcal{H}= U^{-1}VK^{-1}L$ is semiconvergent. Furthermore, the splitting $A = B-C$ induced by $\mathcal{H}$ is  quasi-weak regular of the same type.
\end{theorem}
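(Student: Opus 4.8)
The plan is to verify the two conditions of the semiconvergence characterization in Theorem \ref{semi} for $\mathcal{H}=U^{-1}VK^{-1}L$ and then to identify the induced splitting. I read the hypotheses as saying that each single-step iteration $K^{-1}L$ and $U^{-1}V$ is semiconvergent, and I would begin with the normalizations $U^{-1}V=I-U^{-1}A$ and $K^{-1}L=I-K^{-1}A$, so that $\mathcal{H}=(I-U^{-1}A)(I-K^{-1}A)$. A short expansion then produces the key identity
\begin{equation*}
I-\mathcal{H}=U^{-1}(K+U-A)K^{-1}A,\qquad B^{-1}=U^{-1}(K+U-A)K^{-1}=K^{-1}+U^{-1}LK^{-1}.
\end{equation*}
The left identity immediately gives $\mathcal{N}(A)\subseteq\mathcal{N}(I-\mathcal{H})$, hence $1\in\sigma(\mathcal{H})$, and it isolates the factor $(K+U-A)$ whose invertibility (the two-splitting, singular analogue of the nonsingularity of $K+X-A+YU^{-1}L$ in Theorem \ref{thm5.2}) is exactly what forces $\mathcal{N}(I-\mathcal{H})=\mathcal{N}(A)$.

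Next I would construct the induced splitting. Setting $B=K(K+U-A)^{-1}U$ and $C=B-A$, the identity above reads $B^{-1}A=I-\mathcal{H}$, so $\mathcal{H}=B^{-1}C$ and $A=B-C$ is a genuine splitting; this is the two-splitting, singular counterpart of Theorem \ref{thm5.2}. With $I-\mathcal{H}=B^{-1}A$ and $B$ nonsingular, one has $\mathcal{N}(I-\mathcal{H})=\mathcal{N}(A)$ and $\mathcal{R}(I-\mathcal{H})=B^{-1}\mathcal{R}(A)$, and the index hypotheses $\text{index}(K^{-1}L)\le1$, $\text{index}(U^{-1}V)\le1$, $\text{index}(U^{-1}VK^{-1}L)\le1$ are what guarantee that the relevant group inverses exist and that $1$ is a semisimple eigenvalue of $\mathcal{H}$, i.e. $\mathcal{N}(I-\mathcal{H})\oplus\mathcal{R}(I-\mathcal{H})=\mathbb{R}^n$. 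This delivers condition (ii) of Theorem \ref{semi}.

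Condition (i) is where the quasi-weak regular hypothesis must enter. Writing $\Pi=(I-\mathcal{H})(I-\mathcal{H})^{\#}$ for the projection onto $\mathcal{R}(I-\mathcal{H})$ along $\mathcal{N}(A)$, I would prove $\mathcal{H}\Pi\ge0$ in the type I case (respectively $\Pi'\mathcal{H}\ge0$ with the left projection in the type II case) by propagating the projected nonnegativities $U^{-1}VK_1\ge0$ and $K^{-1}LK_1\ge0$ of the two factors through the product. Since $\mathcal{H}\Pi$ annihilates $\mathcal{N}(A)$ and acts as $\mathcal{H}$ on $\mathcal{R}(I-\mathcal{H})$, its nonzero eigenvalues are precisely the non-unit eigenvalues of $\mathcal{H}$; Perron--Frobenius theory (Theorems \ref{frob1} and \ref{frob2}) applied to this nonnegative operator, combined with the semiconvergence of $K^{-1}L$ and $U^{-1}V$ (which caps each factor's spectrum by $1$ and keeps no spurious peripheral mode in the product), should yield $\rho(\mathcal{H})\le1$ and $\gamma(\mathcal{H})<1$. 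Theorem \ref{semi} then gives semiconvergence of $\mathcal{H}$. For the bounded-partial-sums mechanics on the complement of $\mathcal{N}(A)$ I would mimic the Neumann-type argument of Theorem \ref{thm4.4}, replacing $(I-\mathcal{S})^{-1}$ by the group inverse $(I-\mathcal{H})^{\#}$ as in Theorem \ref{thm4.9}.

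Finally, to show that $A=B-C$ is quasi-weak regular of the same type, I would check $B^{-1}\ge0$ together with the projected condition $B^{-1}C\,\Pi=\mathcal{H}\Pi\ge0$ (type I) or its left analogue (type II); the latter is exactly the inequality already produced above, so it comes for free. The genuine obstacle is nonnegativity of $B^{-1}=K^{-1}+U^{-1}LK^{-1}$: in the regular ($M$-matrix) setting this is immediate from $L\ge0$, but for quasi-weak regular splittings only the projected products are nonnegative, so I expect to need a group-inverse/limit representation of $B^{-1}$ in the spirit of Theorem \ref{thm4.9} to establish $B^{-1}\ge0$. Controlling the peripheral spectrum of the \emph{product} $\mathcal{H}$ (ruling out new unimodular eigenvalues, since a product of semiconvergent matrices need not be semiconvergent) and proving $B^{-1}\ge0$ are the two steps I anticipate carrying the real weight; the remaining identities and index bookkeeping are routine.
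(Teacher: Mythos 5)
First, a point of comparison that matters here: the paper does not prove this statement at all --- Theorem \ref{thm6.9} is quoted verbatim from \cite[Theorem 3.1]{wu:2013} as a known result, so there is no internal proof to measure your attempt against; it must stand on its own. Your opening algebra is correct ($I-\mathcal{H}=U^{-1}(K+U-A)K^{-1}A$ and $B^{-1}=U^{-1}(K+U-A)K^{-1}=K^{-1}+U^{-1}LK^{-1}$), but the construction of the induced splitting hinges on the nonsingularity of $K+U-A$, which is \emph{not} among the hypotheses of this theorem and is never derived by you. Contrast Theorem \ref{thm5.2}, where the analogous nonsingularity is an explicit assumption; here it would have to be extracted from the semiconvergence of the two factors and the index conditions. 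In the nonsingular setting this is free, since $B=A(I-\mathcal{H})^{-1}$ exists whenever $\rho(\mathcal{H})<1$; in the singular setting $I-\mathcal{H}$ is itself singular, so the existence of a nonsingular $B$ with $B(I-\mathcal{H})=A$ --- equivalently, the equality $\mathcal{N}(I-\mathcal{H})=\mathcal{N}(A)$ rather than a strict inclusion --- is part of what the theorem asserts and cannot be presupposed.

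The second and larger gap is that the heart of the theorem, condition (i) of Theorem \ref{semi} (namely $\rho(\mathcal{H})\le 1$ and $\gamma(\mathcal{H})<1$) together with the type-preservation of the induced splitting ($B^{-1}\ge 0$ and the projected nonnegativity of $B^{-1}C$), is asserted rather than proved. Your plan to ``propagate'' nonnegativity through the product conflates three distinct projections: the hypotheses give $U^{-1}V\,(I-U^{-1}V)(I-U^{-1}V)^{\#}\ge 0$ and $K^{-1}L\,(I-K^{-1}L)(I-K^{-1}L)^{\#}\ge 0$ (your text writes the same symbol $K_1$ for both), and what you need is nonnegativity relative to $\Pi=(I-\mathcal{H})(I-\mathcal{H})^{\#}$, a third projection. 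All three share the null space $\mathcal{N}(A)$ but have different ranges (e.g.\ $U^{-1}\mathcal{R}(A)$ versus $K^{-1}\mathcal{R}(A)$ versus $\mathcal{R}(I-\mathcal{H})$), so the factor-wise inequalities do not formally compose; indeed $K^{-1}L\Pi=K^{-1}L K_1'+(\Pi-K_1')$ with a difference term of no particular sign. Moreover, even granting $\mathcal{H}\Pi\ge 0$, nonnegativity alone yields no bound $\rho(\mathcal{H})\le 1$ via Theorems \ref{frob1}--\ref{frob2}; one needs an actual comparison vector or monotonicity argument, and this is precisely where a product of two semiconvergent matrices can fail to be semiconvergent --- a failure mode you correctly name but do not rule out. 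Your closing admission that controlling the peripheral spectrum of $\mathcal{H}$ and proving $B^{-1}\ge 0$ ``carry the real weight'' is accurate: those two items \emph{are} the theorem, so what you have written is a sensible roadmap, not a proof.
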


Similar to the above result, we can obtain the semiconvergence of a three-step alternating iterative scheme \eqref{eqnsemi} when we have three quasi-weak regular splittings of type I (or type II). The same result also holds if we replace quasi-weak regular splittings of type I (or type II) by quasi-regular splitting.
\begin{theorem}\label{t4.10}
Let $A = K-L= U-V=X-Y$
be three quasi-weak regular splittings of type I (or type II) of a singular matrix $A$ such that any pair of splittings $A = K-L= U-V$ is semiconvergent with $index(K^{-1}L)\leq 1$, $index(U^{-1}V)\leq 1$, $index(X^{-1}Y)\leq 1$, $index(I-U^{-1}VK^{-1}L) \leq 1$ and $index(X^{-1}YU^{-1}VK^{-1}L)\leq 1$. Then, $\mathcal{H}= X^{-1}YU^{-1}VK^{-1}L$ is semiconvergent. Furthermore, the splitting $A = B-C$ induced by $\mathcal{H}$ is a quasi-weak regular splitting of the same type.
\end{theorem}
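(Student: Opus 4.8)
The plan is to mirror the structure of the two-step result (Theorem~\ref{thm6.9} of Wu), extending it from two to three quasi-weak regular splittings. The key insight exploited throughout Section~\ref{sec3} is that the three-step iteration matrix $\mathcal{H}=X^{-1}YU^{-1}VK^{-1}L$ can be reduced to a ``single-step'' object: one shows $\mathcal{H}=B^{-1}C$ for an induced splitting $A=B-C$, with $B=A(I-\mathcal{H})^{-1}$, and then transfers the semiconvergence and quasi-weak-regularity of the induced splitting back to $\mathcal{H}$. First I would establish that the induced splitting exists and is proper/well-defined: since each of $A=K-L$, $A=U-V$, $A=X-Y$ is assumed semiconvergent in pairs and the relevant index conditions hold, I would show (in analogy with Lemma~\ref{lem4.8} and Theorem~\ref{thm5.2}, but now in the purely singular, non-$M$-matrix setting) that $I-\mathcal{H}$ has group inverse and that $B=A(I-\mathcal{H})^{-1}$ yields a genuine splitting $A=B-C$ with $\mathcal{H}=B^{-1}C$. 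The index hypotheses $\mathrm{index}(I-U^{-1}VK^{-1}L)\le 1$ and $\mathrm{index}(X^{-1}YU^{-1}VK^{-1}L)\le 1$ are precisely what guarantee the required group inverses exist.

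The second step is to verify semiconvergence of $\mathcal{H}$ via Theorem~\ref{semi}. That requires two things: that $1$ is the only eigenvalue of modulus one (i.e. $\gamma(\mathcal{H})<1$) and that $\mathcal{N}(I-\mathcal{H})\oplus\mathcal{R}(I-\mathcal{H})=\mathbb{R}^n$. The spectral-splitting condition $(ii)$ of Theorem~\ref{semi} follows directly from the index-one assumption on $X^{-1}YU^{-1}VK^{-1}L$, since that is equivalent to the existence of the group inverse of $I-\mathcal{H}$. For the eigenvalue condition I would argue, as in Wu's two-step proof, by relating the spectrum of the composite iteration matrix to the spectra of the individual semiconvergent pairs: each factor $K^{-1}L$, $U^{-1}V$, $X^{-1}Y$ has $\gamma<1$ by the pairwise-semiconvergence hypothesis, and I would deduce that the product inherits $\gamma(\mathcal{H})<1$ while still carrying the eigenvalue $1$ coming from the singular part $AA^{\#}$.

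The final step is to show the induced splitting $A=B-C$ is quasi-weak regular of the same type. Here I would compute $B^{-1}$ (or $CB^{-1}$, resp.\ $B^{-1}C$, depending on type~I versus type~II) explicitly, using the algebraic identities for $B^{\#}$ derived in \eqref{eqn4.1}--\eqref{eqn4.2} of Lemma~\ref{lem4.2} but now with ordinary inverses in place of group inverses, and verify nonnegativity of the appropriate product against the defining nonnegativity of each $K^{-1}LK_1\ge 0$, $U^{-1}VK_1\ge 0$, $X^{-1}YK_1\ge 0$ (type~I) or the $K_2$-versions (type~II). The nonnegativity of $B^{-1}C K_1$ (or $K_2 C B^{-1}$) should follow from the Neumann-series expansion $B^{-1}=\sum_k \mathcal{H}^k \cdots$ analogous to the manipulation used at the end of Theorem~\ref{thm4.4}, together with $\mathcal{H}\ge 0$ on the relevant range.

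The main obstacle I anticipate is the eigenvalue-one / $\gamma(\mathcal{H})<1$ part of Theorem~\ref{semi}, because unlike the convergent (index-1, group-monotone) case of Section~\ref{sec3}, here $\rho(\mathcal{H})=1$ and one cannot simply invoke a Neumann series to force $\rho<1$; instead one must carefully separate the spectrum of $\mathcal{H}$ into the part lying on $\mathcal{R}(A)$ (where the factors behave like the convergent case and contribute $\gamma<1$) and the fixed eigenvalue $1$ arising from $\mathcal{N}(A)$. Establishing rigorously that no \emph{other} eigenvalue of the \emph{product} touches the unit circle—when only the individual pairs are assumed semiconvergent—is the delicate quantitative step, and it is where the index assumptions and the pairwise-semiconvergence hypothesis must be combined most carefully.
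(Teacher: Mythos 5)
Your proposal has a genuine gap, and it is exactly the one you flag at the end: you never actually prove $\gamma(\mathcal{H})<1$ for the three-factor product. Semiconvergence of individual (or pairwise) iteration matrices does not, in general, pass to a product, so ``I would deduce that the product inherits $\gamma(\mathcal{H})<1$'' is not a step but a placeholder for the entire difficulty. Attempting to redo Wu's spectral analysis directly for $X^{-1}YU^{-1}VK^{-1}L$ --- separating the spectrum on $\mathcal{R}(I-\mathcal{H})$ from the fixed eigenvalue $1$ --- is precisely the hard route, and your plan leaves it open. (A smaller imprecision: in this subsection $A$ is merely singular, with no index-one assumption, so your appeal to ``the eigenvalue $1$ coming from $AA^{\#}$'' is not available; $A^{\#}$ need not exist here, and the relevant projectors are $K_1=(I-U^{-1}V)(I-U^{-1}V)^{\#}$ and its analogues.)

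The paper avoids this difficulty entirely by a bootstrap that your proposal misses. Wu's two-step result (Theorem \ref{thm6.9}) delivers two conclusions at once: the product $T=U^{-1}VK^{-1}L$ is semiconvergent, \emph{and} the splitting $A=B_{12}-C_{12}$ induced by $T$ (so $B_{12}^{-1}C_{12}=T$) is again a semiconvergent quasi-weak regular splitting of the same type. That second conclusion is the whole point: it puts you back in the hypotheses of Theorem \ref{thm6.9}. One then applies Theorem \ref{thm6.9} a second time, to the pair $A=B_{12}-C_{12}=X-Y$, using the hypotheses $\mathrm{index}(I-U^{-1}VK^{-1}L)\leq 1$, $\mathrm{index}(X^{-1}Y)\leq 1$ and $\mathrm{index}(X^{-1}YU^{-1}VK^{-1}L)\leq 1$, and concludes immediately that $\mathcal{H}=X^{-1}YB_{12}^{-1}C_{12}=X^{-1}YU^{-1}VK^{-1}L$ is semiconvergent and that its induced splitting $A=B-C$ is quasi-weak regular of the same type. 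No new spectral argument, Neumann-series computation, or nonnegativity verification is needed; all of that is encapsulated in the two-step theorem. If you want to salvage your direct approach, you would have to reprove Wu's spectral lemma for a three-factor product, which is strictly more work than the two-line reduction above.
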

\begin{proof}
We will prove this result for quasi-weak regular splittings of type I. The type II case similarly follows. Since, any one of the splittings $A = K-L= U-V$ is semiconvergent quasi weak regular splitting of type I and, $index(K^{-1}L)\leq 1$, $index(U^{-1}V)\leq 1$ and $index(I-U^{-1}VK^{-1}L) \leq 1$, we have  $T=U^{-1}VK^{-1}L$ is semiconvergent by Theorem \ref{thm6.9}, and the splitting $A=B_{12}-C_{12}$ (say) induced by $T$ is also a semiconvergent quasi weak regular splitting of type I. Now, if we consider the splittings $A=B_{12}-C_{12}=X-Y$ and apply Theorem \ref{thm6.9}, then the matrix $\mathcal{H}=X^{-1}YB_{12}^{-1}C_{12}= X^{-1}YU^{-1}VK^{-1}L$ is semiconvergent. Further, by the same argument as given earlier, the induced splitting $A=B-C$ by $\mathcal{H}$ is also quasi-weak regular splitting of type I.
 \end{proof}

Wu \cite{wu:2013} then obtained the following comparison result.

\begin{theorem}\label{thm6.11}\textnormal{(\cite[Theorem 4.1]{wu:2013})}\\
Let $A = K-L$
be a semiconvergent quasi-regular splitting and $A =U-V$
be a quasi regular splittings of a singular matrix $A$ such that $index(I-K^{-1}L)\leq 1$, $index(I-U^{-1}V) \leq 1$ and $index(I-\mathcal{H}) \leq 1$, where $\mathcal{H}= U^{-1}VK^{-1}L$. Then,
$$\gamma(\mathcal{H}) \leq min \{\gamma(U^{-1}V),\gamma(K^{-1}L))\} < 1.$$ 
\end{theorem}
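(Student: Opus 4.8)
The plan is to reduce the comparison for the two-step iteration matrix $\mathcal{H}=U^{-1}VK^{-1}L$ to a comparison between a single induced splitting $A=B-C$ (with $\mathcal{H}=B^{-1}C$) and each of the two factor splittings, and then to run a Perron--Frobenius argument on the invariant subspace from which the eigenvalue $1$ has been removed. First I would record the elementary identity obtained by substituting $V=U-A$ and $L=K-A$ and collecting terms,
\[
I-\mathcal{H}=I-U^{-1}VK^{-1}L=U^{-1}(K+V)K^{-1}A .
\]
Since $\mathrm{index}(I-\mathcal{H})\le 1$ forces $\mathcal{N}(I-\mathcal{H})=\mathcal{N}(A)$, this identity shows that $K+V$ must be nonsingular; this is exactly the hypothesis needed to produce, via the two-step specialization of Theorem \ref{thm5.2} together with Theorem \ref{thm6.9}, a quasi-regular splitting $A=B-C$ with $\mathcal{H}=B^{-1}C$ that is semiconvergent. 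In particular $\gamma(\mathcal{H})<1$, and $B^{-1}=U^{-1}(K+V)K^{-1}$.

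Next I would extract the two algebraic comparison identities for $B^{-1}$. From $B^{-1}=U^{-1}(K+V)K^{-1}$ and $K+V=U+L$ one reads off
\[
B^{-1}=U^{-1}+U^{-1}VK^{-1}=K^{-1}+U^{-1}LK^{-1}.
\]
In the regular case the nonnegativity $U^{-1}\ge 0$, $K^{-1}\ge 0$, $V\ge 0$, $L\ge 0$ gives at once $B^{-1}\ge U^{-1}\ge 0$ and $B^{-1}\ge K^{-1}\ge 0$. In the genuinely quasi-regular case $V$ and $L$ are no longer entrywise nonnegative, so these inequalities have to be read after composition with the group-inverse projection $K_1=(I-K^{-1}L)(I-K^{-1}L)^{\#}$ and its analogues for $U^{-1}V$ and for $\mathcal{H}$; the point is that $\gamma$ only detects the action of each iteration matrix on $\mathcal{R}(I-\,\cdot\,)=\mathcal{R}(A)$, where the relevant products are genuinely nonnegative.

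The heart of the argument is the $\gamma$-comparison itself, which I would carry out exactly as in the proof of Theorem \ref{thm4.10}, with $\rho$ replaced by $\gamma$ and the group inverses replaced by the ordinary inverses of $K$, $U$ and $B$. Restricting to the invariant subspace complementary to the eigenspace of the eigenvalue $1$, the restriction of $K^{-1}L$ is nonnegative with spectral radius $\gamma(K^{-1}L)$, so Theorem \ref{frob1} supplies a nonnegative eigenvector $x\ (\ne 0)$ for that value. Expanding the restricted resolvent $(I-K^{-1}L)^{-1}$ by Theorem \ref{neuman} and invoking $B^{-1}\ge K^{-1}$, I would obtain the corresponding inequality between the restricted resolvents of $\mathcal{H}$ and of $K^{-1}L$ evaluated at $x$, whence $\gamma(\mathcal{H})\le \gamma(K^{-1}L)$ by Theorem \ref{frob2}. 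The identical computation using $B^{-1}\ge U^{-1}$ gives $\gamma(\mathcal{H})\le \gamma(U^{-1}V)$; combining the two yields the stated bound, and the strict inequality $<1$ comes from the semiconvergence of the factor splittings via Theorem \ref{semi}.

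The step I expect to be the main obstacle is making this Perron--Frobenius reduction rigorous in the genuinely quasi-regular case. Because $\rho(K^{-1}L)=\rho(U^{-1}V)=\rho(\mathcal{H})=1$, the naive Varga-type conclusion $\rho(\mathcal{H})\le\rho(K^{-1}L)$ is vacuous, so all the content lies in the \emph{next} eigenvalue $\gamma$; one must delete the eigenvalue $1$ by passing to $\mathcal{R}(I-\,\cdot\,)$ and then verify that the invariant decompositions induced by $K^{-1}L$, $U^{-1}V$ and $\mathcal{H}$ are mutually compatible. They are, because $I-K^{-1}L=K^{-1}A$, $I-U^{-1}V=U^{-1}A$ and $I-\mathcal{H}=B^{-1}A$ all share the null space $\mathcal{N}(A)$, so each reduces to the same splitting $\mathcal{N}(A)\oplus\mathcal{R}(A)=\mathbb{R}^n$. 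Establishing that the restricted operators are nonnegative on a common cone, so that a single Perron eigenvector can be used across both comparisons, is precisely where the quasi-regular hypotheses $VK_1\ge 0$ (rather than $V\ge 0$) must be exploited carefully.
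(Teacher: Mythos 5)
Note first that the paper does not prove this statement at all: it is imported verbatim as \cite[Theorem 4.1]{wu:2013}, so there is no internal proof to compare your attempt against, and your proposal must stand on its own. Its overall plan (pass to a splitting $A=B-C$ induced by $\mathcal{H}$, then compare against each factor splitting) is the natural one, and your identity $I-\mathcal{H}=U^{-1}(K+V)K^{-1}A$ is correct; but the execution has genuine gaps. The first is your opening inference that $\mathrm{index}(I-\mathcal{H})\le 1$ ``forces'' $\mathcal{N}(I-\mathcal{H})=\mathcal{N}(A)$ and hence that $K+V$ is nonsingular. This is a double non sequitur: $\mathrm{index}(I-\mathcal{H})\le 1$ says only that $\mathcal{N}(I-\mathcal{H})=\mathcal{N}\bigl((I-\mathcal{H})^{2}\bigr)$, i.e.\ that the eigenvalue $1$ of $\mathcal{H}$ is semisimple, and places no constraint tying $\mathcal{N}(I-\mathcal{H})$ to $\mathcal{N}(A)$; and even if one knew $\mathcal{N}(I-\mathcal{H})=\mathcal{N}(A)$, the identity only yields $\mathcal{N}(K+V)\cap K^{-1}\mathcal{R}(A)=\{0\}$, not invertibility of $K+V$. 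Since the matrix $B$ with $B^{-1}=U^{-1}(K+V)K^{-1}$ is the object on which everything else rests, the argument is blocked at its first step; the nonsingularity of $K+V$ has to be assumed or obtained from Theorem \ref{thm6.9}'s conclusion, not derived this way.

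The second, more serious gap is the Perron--Frobenius step itself. You run it on ``the restriction of $K^{-1}L$ to the invariant subspace complementary to the eigenspace of $1$,'' which you call nonnegative. Entrywise nonnegativity is not a property a restriction to a subspace can have, and Theorems \ref{frob1}, \ref{frob2} and \ref{neuman} apply only to genuine $n\times n$ nonnegative matrices. The objects that are nonnegative under the quasi-regular hypothesis are the full-size products with the spectral projections, e.g.\ $K^{-1}L\,(I-K^{-1}L)(I-K^{-1}L)^{\#}\ge 0$, whose spectral radius equals $\gamma(K^{-1}L)$; the whole comparison must be rewritten in those terms, which you explicitly defer to the end as the step that ``must be exploited carefully'' --- that is, by your own admission the heart of the proof is missing. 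Moreover, the compatibility claim you invoke to justify a single Perron eigenvector across both comparisons is false: from $I-K^{-1}L=K^{-1}A$, $I-U^{-1}V=U^{-1}A$ and $I-\mathcal{H}=B^{-1}A$, the null spaces indeed all equal $\mathcal{N}(A)$, but the ranges are $K^{-1}\mathcal{R}(A)$, $U^{-1}\mathcal{R}(A)$ and $B^{-1}\mathcal{R}(A)$, which are in general three distinct subspaces, none of them equal to $\mathcal{R}(A)$. Hence the three decompositions do not ``all reduce to the same splitting $\mathcal{N}(A)\oplus\mathcal{R}(A)=\mathbb{R}^{n}$,'' and there is no common invariant complement or common cone on which your unified Perron--Frobenius argument can be run; each comparison has to be carried out separately with its own projection, in the style of the paper's Theorem \ref{thm4.10} but with the projected nonnegative matrices replacing the iteration matrices throughout.
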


The next result is motivated by the above theorem.

\begin{theorem}\label{thm6.12}
Let $A = K-L$
be a semiconvergent quasi-regular splitting and $A = U-V=X-Y$
be a quasi weak regular splitting of type I of a singular matrix $A$ such that $index(I-K^{-1}L)\leq 1$, $index(I-U^{-1}V) \leq 1$, $index(I-X^{-1}Y)\leq 1$ and $index(I-\mathcal{H}) \leq 1$, where $\mathcal{H}= X^{-1}YU^{-1}VK^{-1}L$. Then,
$\gamma(\mathcal{H}) \leq \gamma(X^{-1}Y) < 1.$ 
\end{theorem}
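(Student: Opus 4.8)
The plan is to prove this one--sided comparison by reducing the three--step iteration matrix to a two--step one and then invoking the two--step comparison result of Theorem \ref{thm6.11}, following exactly the telescoping strategy that was used in the proof of Theorem \ref{t4.10}. The guiding observation is that $\mathcal{H}=X^{-1}YU^{-1}VK^{-1}L$ can be read as the two--step iteration matrix of the pair $A=X-Y$ and a \emph{single} induced splitting that absorbs the inner product $U^{-1}VK^{-1}L$; once $\mathcal{H}$ is in this two--factor form with $X-Y$ as the outer factor, the desired bound $\gamma(\mathcal{H})\le\gamma(X^{-1}Y)$ is precisely the ``outer'' half of the comparison in Theorem \ref{thm6.11}. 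Note first that a quasi--regular splitting is automatically quasi--weak regular of type I, so all three of $K-L$, $U-V$, $X-Y$ are type I, with $K-L$ carrying the extra quasi--regular and semiconvergence structure.

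First I would combine the two innermost splittings $A=U-V$ and $A=K-L$. The product $T=U^{-1}VK^{-1}L$ is the two--step iteration matrix of this pair, and since $A=K-L$ is a semiconvergent quasi--regular splitting and $A=U-V$ is quasi--weak regular of type I, Theorem \ref{thm6.9} should yield an induced splitting $A=B_{12}-C_{12}$ with $B_{12}^{-1}C_{12}=T$ that is again semiconvergent and quasi--weak regular of type I. Before invoking it I would have to secure the index bound $index(I-T)\le 1$, which is not listed verbatim among the hypotheses and must be recovered from $index(I-\mathcal{H})\le 1$ together with $index(I-K^{-1}L)\le 1$ and $index(I-U^{-1}V)\le 1$; this bookkeeping is the same one already carried out inside the proof of Theorem \ref{t4.10}.

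Second I would rewrite $\mathcal{H}=X^{-1}Y\,B_{12}^{-1}C_{12}$, which is exactly the two--step iteration matrix associated with the pair $A=X-Y$ and $A=B_{12}-C_{12}$. Applying the comparison of Theorem \ref{thm6.11} to this pair, with $B_{12}-C_{12}$ in the role of the semiconvergent (inner) splitting and $X-Y$ in the role of the comparison (outer) splitting, produces a bound by the minimum of $\gamma(X^{-1}Y)$ and $\gamma(B_{12}^{-1}C_{12})$; retaining only the term attached to the outer splitting gives $\gamma(\mathcal{H})\le\gamma(X^{-1}Y)$, and $\gamma(X^{-1}Y)<1$ because $A=X-Y$ is a semiconvergent type I quasi--weak regular splitting of the singular matrix $A$.

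The main obstacle is matching the hypotheses of the two--step comparison theorem to the present weaker setting: Theorem \ref{thm6.11} is stated for \emph{quasi--regular} splittings, whereas here $X-Y$ (and the induced $B_{12}-C_{12}$) are only quasi--weak regular of type I. What I actually need is just the one--sided half of that comparison -- the inequality governed by the outer splitting -- which should survive under the weaker type I assumption, exactly as the one--sided group--inverse comparison of Theorem \ref{thm3.8} survives in the nonsingular analogue Theorem \ref{thm3.13}. Concretely, if the type I version is not directly available I would re--run the Perron--Frobenius eigenvector argument (Theorems \ref{frob1} and \ref{frob2}) on $C_{12}B_{12}^{-1}$ in the spirit of Theorem \ref{thm4.10}, using the domination of the induced $B_{12}^{-1}$ over $X^{-1}$ that the quasi--regularity of the inner $K-L$ forces. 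Establishing that domination, and confirming semiconvergence of the intermediate splitting so that the comparison applies, is where the genuine work lies; the rest is the telescoping already rehearsed for Theorem \ref{t4.10}.
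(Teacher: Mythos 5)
Your proposal follows essentially the same route as the paper's proof: telescope the inner pair $A=U-V$ and $A=K-L$ into the induced splitting $A=B_{12}-C_{12}$ with $B_{12}^{-1}C_{12}=U^{-1}VK^{-1}L$, then apply the two-step comparison Theorem \ref{thm6.11} to the pair $A=B_{12}-C_{12}=X-Y$ and retain the outer term, giving $\gamma(\mathcal{H})\leq\gamma(X^{-1}Y)<1$. The two obstacles you flag are real --- Theorem \ref{thm6.11} is stated for quasi-regular splittings while $X-Y$ is only quasi-weak regular of type I, and $index(I-U^{-1}VK^{-1}L)\leq 1$ is not among the stated hypotheses --- but the paper's own proof simply glosses over both, asserting that $A=K-L=U-V$ are quasi-regular and that the induced splittings are (quasi-)regular, so your more cautious treatment is, if anything, an improvement on the published argument rather than a departure from it.
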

\begin{proof}
Since $A=K-L=U-V$ are quasi regular splittings, therefore the splitting $A=B_{12}-C_{12}$ induced by $U^{-1}VK^{-1}L$ is also a quasi regular splitting. Now, consider the regular splittings $A=B_{12}-C_{12}=X-Y$, then the induced splitting $A=B-C$ of $X^{-1}YB_{12}^{-1}C_{12}=X^{-1}YU^{-1}VK^{-1}L$ is also a regular splitting. Now, by Theorem \ref{thm6.11}, we have $\gamma(\mathcal{H})=\gamma(B^{-1}C)\leq \gamma(X^{-1}Y)<1.$  
 \end{proof}

Now, we obtain the next result as a corollary.

\begin{corollary}\label{cor6.13}
Let $A = K-L= U-V=X-Y$
be three semiconvergent quasi regular splittings of a singular matrix $A$ such that $index(I-K^{-1}L)\leq 1$, $index(I-U^{-1}V) \leq 1$, $index(I-X^{-1}Y)\leq 1$ and $index(I-\mathcal{H}) \leq 1$, where $\mathcal{H}= X^{-1}YU^{-1}VK^{-1}L$. Then,
$\gamma(\mathcal{H}) \leq min \{\gamma(U^{-1}V),\gamma(X^{-1}Y),\gamma(K^{-1}L))\} < 1.$ 
\end{corollary}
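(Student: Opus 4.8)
The plan is to bootstrap from Theorem \ref{thm6.12}, whose conclusion already controls $\gamma(\mathcal{H})$ by the $\gamma$ of the \emph{outermost} factor of the product $\mathcal{H}=X^{-1}YU^{-1}VK^{-1}L$. The idea is to rotate each of the three factors into that outermost slot in turn, invoke Theorem \ref{thm6.12} once for each rotation, and then take the minimum of the three resulting bounds. To make this legitimate I need two preliminary reductions, after which the argument is largely bookkeeping.

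First I would record the two reductions. \textbf{(1)} Every semiconvergent quasi-regular splitting is in particular a quasi-weak regular splitting of type I: from $U^{-1}\ge 0$ and $VK_1\ge 0$ one gets $U^{-1}VK_1=U^{-1}(VK_1)\ge 0$. Hence all three splittings $A=K-L=U-V=X-Y$ simultaneously satisfy the type-I hypothesis of Theorem \ref{thm6.12}, while any one of them, being semiconvergent quasi-regular, may legitimately play the role of the innermost splitting ``$K-L$'' in that theorem. \textbf{(2)} The quantity $\gamma$ is invariant under cyclic rotation of the product: writing $P=X^{-1}Y$, $Q=U^{-1}V$ and $R=K^{-1}L$, the matrices $PQR$, $QRP$ and $RPQ$ share the same nonzero eigenvalues together with their Jordan structure, by the standard fact that $MN$ and $NM$ have identical nonzero spectra (with multiplicities and block sizes). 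Consequently $\gamma(PQR)=\gamma(QRP)=\gamma(RPQ)$, and because $1$ is a nonzero eigenvalue, the semisimplicity expressed by $\mathrm{index}(I-PQR)\le 1$ is shared by $QRP$ and $RPQ$, so $\mathrm{index}(I-QRP)\le 1$ and $\mathrm{index}(I-RPQ)\le 1$ as well.

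With these in hand I would apply Theorem \ref{thm6.12} three times. Applied directly to $\mathcal{H}=PQR$ with $K-L$ innermost, it gives $\gamma(\mathcal{H})\le \gamma(X^{-1}Y)$. Next I consider the rotated matrix $QRP=U^{-1}VK^{-1}LX^{-1}Y$; relabelling $X-Y$ as the (semiconvergent quasi-regular) innermost splitting and $U-V$ as the outermost one, Theorem \ref{thm6.12} yields $\gamma(QRP)\le \gamma(U^{-1}V)$, which by reduction (2) reads $\gamma(\mathcal{H})\le \gamma(U^{-1}V)$. The third rotation $RPQ=K^{-1}LX^{-1}YU^{-1}V$, with $U-V$ innermost and $K-L$ outermost, gives $\gamma(\mathcal{H})=\gamma(RPQ)\le \gamma(K^{-1}L)$ in the same way. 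In each application the four index hypotheses of Theorem \ref{thm6.12} are met: the three single-factor conditions $\mathrm{index}(I-K^{-1}L),\ \mathrm{index}(I-U^{-1}V),\ \mathrm{index}(I-X^{-1}Y)\le 1$ are assumed outright and are unaffected by relabelling, while $\mathrm{index}(I-\,\cdot\,)\le 1$ for the rotated product follows from $\mathrm{index}(I-\mathcal{H})\le 1$ via reduction (2). Combining the three inequalities gives $\gamma(\mathcal{H})\le \min\{\gamma(K^{-1}L),\gamma(U^{-1}V),\gamma(X^{-1}Y)\}$, and each of these is strictly below $1$ since the corresponding splitting is semiconvergent (equivalently by the strict bound in Theorem \ref{thm6.12}), which is the claim.

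The step I expect to require the most care is reduction (2), specifically the transfer of the index-one condition under cyclic rotation. The equality of nonzero spectra of $MN$ and $NM$ is routine, but I must argue that the eigenvalue $1$ keeps its semisimplicity, i.e.\ that the Jordan blocks attached to $1$ have size one in every rotation; this is exactly the statement that the nonzero part of the Jordan form is preserved under $MN\mapsto NM$, and it is what guarantees $\mathrm{index}(I-\mathcal{H})\le 1$ propagates to the rotated products. Once that invariance is secured, the remainder is simply tracking which factor occupies the outermost position in each application of Theorem \ref{thm6.12}.
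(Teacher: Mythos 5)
Your proof is correct, but it takes a genuinely different route from the paper's. The paper never permutes the factors of $\mathcal{H}$: it applies Theorem \ref{thm6.12} once to get $\gamma(\mathcal{H})\leq\gamma(X^{-1}Y)$, and then ``repeats the step'' of that theorem's proof with the other pairings of the fixed-order product --- that is, it passes to the induced splitting $A=B_{12}-C_{12}$ with $B_{12}^{-1}C_{12}=U^{-1}VK^{-1}L$ and exploits the fact that Wu's two-step comparison result (Theorem \ref{thm6.11}) bounds the $\gamma$ of a two-step product by the minimum over \emph{both} factors, so that chaining $\gamma(\mathcal{H})\leq\gamma(B_{12}^{-1}C_{12})\leq\min\{\gamma(U^{-1}V),\gamma(K^{-1}L)\}$ delivers the remaining two inequalities entirely within the splitting framework. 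You instead treat Theorem \ref{thm6.12} as a black box (which controls only the outermost factor) and rotate each factor into the outermost slot, paying for this with spectral theory: $MN$ and $NM$ of the same size have the same characteristic polynomial, hence equal $\gamma$, and the Flanders-type preservation of Jordan structure at nonzero eigenvalues transports $\mathrm{index}(I-\cdot)\leq 1$ to the cyclically rotated products $U^{-1}VK^{-1}LX^{-1}Y$ and $K^{-1}LX^{-1}YU^{-1}V$ --- the point you rightly flag as delicate, and which does hold precisely because $1\neq 0$. Your two reductions are also sound: $U^{-1}\geq 0$ and $VK_1\geq 0$ give $U^{-1}VK_1=U^{-1}(VK_1)\geq 0$, so every quasi-regular splitting is quasi-weak regular of type I, and semiconvergence of each splitting gives the strict bound $<1$. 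What your route buys: it would survive even if only the one-sided bound of Theorem \ref{thm6.12} were available, and it makes rigorous the symmetry among the three splittings that the paper's one-line ``similarly'' invokes only implicitly. What the paper's route buys: no Jordan-form or rotation argument at all, and no need to transfer the index hypotheses to reordered products, since the matrix $\mathcal{H}$ itself is never altered.
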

\begin{proof}
By Theorem \ref{thm6.12}, we have $\gamma(H) \leq \gamma(X^{-1}Y)$. Similarly, considering the other pairs of splittings and repeating the step as in Theorem \ref{thm6.12}, we conclude that $\gamma(\mathcal{H}) \leq min \{\gamma(U^{-1}V),\gamma(X^{-1}Y),\gamma(K^{-1}L)\} < 1.$
 \end{proof}

The following result provides some sufficient conditions such that the semiconvergence of the three-step alternating iterative scheme is faster than the two-step alternating iterative scheme.

\begin{theorem}\label{t4.14}
Let $A = K-L = U-V=X-Y$
be three semiconvergent quasi-regular splittings of a singular matrix $A$. Let $A=B-C=B_{12}-C_{12}=B_{13}-C_{13}=B_{23}-C_{23}$ be the quasi regular splittings induced by the matrices $\mathcal{H}=X^{-1}YU^{-1}VK^{-1}L$, $U^{-1}VK^{-1}L$, $X^{-1}YK^{-1}L$ and $X^{-1}YU^{-1}V$, respectively. Suppose that $index(I-K^{-1}L)\leq 1$, $index(I-U^{-1}V) \leq 1$, $index(I-X^{-1}Y)\leq 1$, $index(I-B_{12}^{-1}C_{12})\leq 1$, $index(I-B_{13}^{-1}C_{13})\leq 1$, $index(I-B_{23}^{-1}C_{23})\leq 1$ and $index(I-\mathcal{H}) \leq 1$. Then,\\
$$\gamma(\mathcal{H}) \leq min \{\gamma(U^{-1}VK^{-1}L),\gamma(X^{-1}YU^{-1}V),\gamma(X^{-1}YK^{-1}L))\} < 1.$$ 
\end{theorem}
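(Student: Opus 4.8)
The plan is to reduce the three-step comparison to the two-step comparison theorem already in hand, namely Theorem \ref{thm6.11}. The key observation is that the three-step iteration matrix $\mathcal{H}=X^{-1}YU^{-1}VK^{-1}L$ can be read as a two-step alternating iteration matrix in which one of the two blocks is the iteration matrix $B_{ij}^{-1}C_{ij}$ of one of the already-induced quasi-regular splittings. Since $A=B_{12}-C_{12}$, $A=B_{13}-C_{13}$ and $A=B_{23}-C_{23}$ are quasi-regular splittings with $B_{12}^{-1}C_{12}=U^{-1}VK^{-1}L$, $B_{13}^{-1}C_{13}=X^{-1}YK^{-1}L$ and $B_{23}^{-1}C_{23}=X^{-1}YU^{-1}V$, pairing each with an appropriate original splitting and applying Theorem \ref{thm6.11} should yield $\gamma(\mathcal{H})\leq\gamma(B_{ij}^{-1}C_{ij})$ together with the strict bound $\gamma(\mathcal{H})<1$.

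First I would dispatch the two inequalities that come from the obvious left- and right-factorizations of $\mathcal{H}$. Writing $\mathcal{H}=X^{-1}Y\,(U^{-1}VK^{-1}L)=X^{-1}Y\,B_{12}^{-1}C_{12}$ exhibits $\mathcal{H}$ as the two-step matrix of the pair $A=X-Y$ and $A=B_{12}-C_{12}$ (the latter being semiconvergent quasi-regular by Theorem \ref{thm6.9}); the hypotheses supply $index(I-X^{-1}Y)\leq1$, $index(I-B_{12}^{-1}C_{12})\leq1$ and $index(I-\mathcal{H})\leq1$, so Theorem \ref{thm6.11} gives $\gamma(\mathcal{H})\leq\gamma(B_{12}^{-1}C_{12})=\gamma(U^{-1}VK^{-1}L)$. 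Symmetrically, $\mathcal{H}=(X^{-1}YU^{-1}V)\,K^{-1}L=B_{23}^{-1}C_{23}\,K^{-1}L$ pairs $A=B_{23}-C_{23}$ with the semiconvergent quasi-regular splitting $A=K-L$ and yields $\gamma(\mathcal{H})\leq\gamma(B_{23}^{-1}C_{23})=\gamma(X^{-1}YU^{-1}V)$.

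The remaining bound $\gamma(\mathcal{H})\leq\gamma(X^{-1}YK^{-1}L)$ is the step I expect to be the real obstacle, because $B_{13}^{-1}C_{13}=X^{-1}YK^{-1}L$ skips the middle splitting $A=U-V$: neither $B_{13}^{-1}C_{13}\,U^{-1}V$ nor $U^{-1}V\,B_{13}^{-1}C_{13}$ equals $\mathcal{H}$, so there is no two-step factorization of $\mathcal{H}$ having $B_{13}^{-1}C_{13}$ as a block. To get around this I would use the invariance of the nonzero spectrum under cyclic permutation of a product: cyclically rotating the three blocks gives $\gamma(\mathcal{H})=\gamma\big(K^{-1}L\,X^{-1}Y\,U^{-1}V\big)$, and a two-block swap gives $\gamma(K^{-1}LX^{-1}Y)=\gamma(X^{-1}YK^{-1}L)$. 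In the rotated product the blocks $K^{-1}L$ and $X^{-1}Y$ are adjacent, so $K^{-1}LX^{-1}Y$ is itself a two-step matrix of the two semiconvergent quasi-regular splittings $A=K-L$ and $A=X-Y$; by the quasi-regular analogue of Theorem \ref{thm6.9} it induces a semiconvergent quasi-regular splitting $A=\widetilde{B}-\widetilde{C}$ with $\widetilde{B}^{-1}\widetilde{C}=K^{-1}LX^{-1}Y$, whence $K^{-1}LX^{-1}Y\,U^{-1}V=\widetilde{B}^{-1}\widetilde{C}\,U^{-1}V$ is the two-step matrix of $A=\widetilde{B}-\widetilde{C}$ and $A=U-V$, and Theorem \ref{thm6.11} gives $\gamma(\mathcal{H})\leq\gamma(\widetilde{B}^{-1}\widetilde{C})=\gamma(X^{-1}YK^{-1}L)$.

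The delicate point in this last step is whether the rotated products inherit the index-one conditions required to invoke Theorems \ref{thm6.9} and \ref{thm6.11}. This is in fact automatic: cyclic permutation preserves not only the nonzero eigenvalues but their entire Jordan structure, and in particular the semisimplicity of the eigenvalue $1$; hence $index(I-\cdot)\leq1$ is preserved, so $index(I-K^{-1}LX^{-1}Y)\leq1$ and $index\big(I-\widetilde{B}^{-1}\widetilde{C}\,U^{-1}V\big)\leq1$ follow from the assumed conditions $index(I-X^{-1}YK^{-1}L)\leq1$ and $index(I-\mathcal{H})\leq1$. Combining the three inequalities gives $\gamma(\mathcal{H})\leq\min\{\gamma(U^{-1}VK^{-1}L),\gamma(X^{-1}YU^{-1}V),\gamma(X^{-1}YK^{-1}L)\}<1$, as claimed.
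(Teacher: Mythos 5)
Your proposal is correct, and it is in fact \emph{more} careful than the paper's own proof at exactly the point you flagged. The paper argues precisely as you do for two of the three bounds: it pairs $A=B_{12}-C_{12}$ with $A=X-Y$ (two-step matrix $X^{-1}Y\,B_{12}^{-1}C_{12}=\mathcal{H}$) and $A=B_{23}-C_{23}$ with $A=K-L$ (two-step matrix $B_{23}^{-1}C_{23}\,K^{-1}L=\mathcal{H}$), invoking the two-step comparison result in each case. For the middle bound, however, the paper simply writes ``similarly, taking the pair $A=B_{13}-C_{13}=U-V$'' --- but, as you observe, the two-step matrix of that pair is $U^{-1}V\,X^{-1}YK^{-1}L$ (or $X^{-1}YK^{-1}L\,U^{-1}V$), which is neither equal to $\mathcal{H}=X^{-1}Y\,U^{-1}V\,K^{-1}L$ nor a cyclic permutation of it, so the comparison theorem applied to that pair bounds the wrong spectral quantity. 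Your repair --- rotate to $K^{-1}L\,X^{-1}Y\,U^{-1}V$, which \emph{is} cyclically equivalent to $\mathcal{H}$, let the adjacent blocks $K^{-1}L\,X^{-1}Y$ induce a splitting $A=\widetilde{B}-\widetilde{C}$, and compare against $\gamma(\widetilde{B}^{-1}\widetilde{C})=\gamma(X^{-1}YK^{-1}L)$ --- is sound, and your justification of the inherited index-one conditions is the right one: the fact that $PQ$ and $QP$ share the Jordan structure of all nonzero eigenvalues (Flanders' theorem) transfers semisimplicity of the eigenvalue $1$, hence $index(I-PQ)\leq 1$ iff $index(I-QP)\leq 1$. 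The only caveat is that this argument leans on two facts used but not isolated in the paper: the quasi-regular analogue of Theorem \ref{thm6.9} (which the paper itself invokes inside Theorem \ref{thm6.12}) and the Flanders-type invariance, which you should state or cite explicitly if this were written up. In short: same overall reduction-to-two-step strategy as the paper, but your treatment of the $\gamma(X^{-1}YK^{-1}L)$ term closes a genuine gap that the paper's ``similarly'' papers over.
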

\begin{proof}
Considering the pair of splittings $A=B_{12}-C_{12}=X-Y$ and applying Corollary \ref{cor6.13}, we get $\gamma(\mathcal{H})\leq \gamma(B_{12}^{-1}C_{12})=\gamma(U^{-1}VK^{-1}L)$. Similarly, taking the pairs $A=B_{13}-C_{13}=U-V$ and $A=B_{23}-C_{23}=K-L$, we get $\gamma(\mathcal{H})\leq \gamma(B_{13}^{-1}C_{13})=\gamma(X^{-1}YK^{-1}L)$ and $\gamma(\mathcal{H})\leq \gamma(B_{23}^{-1}C_{23})=\gamma(X^{-1}YU^{-1}V)$, respectively. Hence, $\gamma(\mathcal{H}) \leq min \{\gamma(U^{-1}VK^{-1}L),\gamma(X^{-1}YU^{-1}V),\gamma(X^{-1}YK^{-1}L))\} < 1.$ 
 \end{proof}

\section{Conclusion}
\noindent We have established the convergence theory of the three-step alternating iterative scheme for proper G-weak regular splittings of type II  to find an iterative solution of an index 1 linear system. In particular, we have obtained sufficient conditions for the convergence of the three-step alternating iteration scheme in Theorem \ref{thm4.4}. The uniqueness of a proper splitting induced by $S$  is shown next in Theorem \ref{thm4.9}. Finally, we have proved that the three-step alternating iteration scheme converges faster than the usual iteration scheme and the two-step alternating iteration scheme in Theorem \ref{thm4.11} under some assumptions. The computational benefits of using the three-step alternating iterative scheme are demonstrated in Table \ref{tab:1}. Next, we have established the semiconvergence of the three-step alternating iterative scheme when we have regular and weak regular splittings of type I (Theorems \ref{t4.5} and \ref{t4.8}) and quasi-regular splittings (Theorem \ref{t4.10}). Further, it is shown that the rate of semiconvergence of the three-step alternating iterative scheme is more than the two-step and single-step iterative schemes (Theorems \ref{t4.14} and \ref{thm6.12}). This theory is well suited for solving a linear system that arrives from the Markov process. Table \ref{table:2} shows that the three-step iterative schemes are effective concerning computational storage, computational time, and iteration steps. The motivation behind the procedure is to acquaint researchers and practitioners with an alternative approach that is easy to understand, implement, and computationally efficient.



\end{document}